\newcommand{\leqnomode}{\tagsleft@true}
\newcommand{\reqnomode}{\tagsleft@false}
\newcommand{\customlabel}[2]{
\protected@write \@auxout {}{\string \newlabel {#1}{{#2}{}}}}
\newtheorem{thm}{Theorem}[section]
\newtheorem{theorem}[thm]{Theorem}
\newtheorem{lemma}[thm]{Lemma}
\newtheorem{prop}[thm]{Proposition}
\newtheorem{proposition}[thm]{Proposition}
\newtheorem{cor}[thm]{Corollary}
\theoremstyle{definition}
\newtheorem{defn}[thm]{Definition}
\newtheorem{definition}[thm]{Definition}
\newtheorem{remark}[thm]{Remark}
\newtheorem{example}[thm]{Example}
\DeclareMathOperator{\wt}{wt}
\DeclareMathOperator{\PBT}{PBT}
\DeclareMathOperator{\rev}{rev}
\DeclareMathOperator{\inc}{inc}
\DeclareMathOperator{\Tab}{Tab}
\DeclareMathOperator{\id}{id}
\DeclareMathOperator{\dec}{dec}
\DeclareMathOperator{\MLQ}{MLQ}
\DeclareMathOperator{\ASEP}{ASEP}
\DeclareMathOperator{\skipped}{skipped}
\DeclareMathOperator{\free}{free}
\DeclareMathOperator{\QT}{QT}
\DeclareMathOperator{\maj}{maj}
\DeclareMathOperator{\leg}{leg}
\DeclareMathOperator{\arm}{arm}
\DeclareMathOperator{\coinv}{coinv}
\newcommand{\Q}{\mathbb Q}
\newcommand{\Z}{\mathbb Z}
\newcommand{\x}{{\mathbf x}}
\newcommand\cell[3]{
\def\i{#1} \def\j{#2} \def\entry{#3}

\draw (\j-1,-\i)--(\j,-\i)--(\j,-\i+1)--(\j-1,-\i+1)--(\j-1,-\i);
\node at (\j-.5,-\i+.5) {\entry};
}
\newcommand\qtrip[3]{
\begin{tikzpicture}[scale=0.5]
\cell{1}{0}{#1}; \cell{2}{0}{#2}; \cell{2}{2.7}{#3};
\node at (1,-1.5) {$\cdots$};
\end{tikzpicture}
}
\newcommand\qqua[4]{
\begin{tikzpicture}[scale=0.5]
\cell{1}{0}{#1} \cell{2}{0}{#2} \cell{1}{2.7}{#3} \cell{2}{2.7}{#4}
\node at (1,-1.5) {$\cdots$};
\end{tikzpicture}
}
\title{From multiline queues to Macdonald polynomials via the exclusion process}
\date{\today}
\author{Sylvie Corteel}
\address{Department of Mathematics, UC Berkeley, Berkeley CA}
\email{corteel@berkeley.edu}
\author{Olya Mandelshtam}
\address{Combinatorics and Optimization,
University of Waterloo, Waterloo, ON}
\email{omandels@uwaterloo.ca}
\author{Lauren Williams}
\address{Department of Mathematics,
Harvard University, Cambridge, MA}
\email{williams@math.harvard.edu}
\thanks{SC was in residence at MSRI in Berkeley (NSF grant DMS-1440140) 
and was funded by the Miller
Institute, Berkeley during the elaboration of this work. SC is partially funded by NSF grant DMS-2054482.  OM was supported by NSF grant DMS-1704874.
LW was partially supported by NSF grant DMS-1600447.}
\begin{document}
\keywords{asymmetric simple exclusion process, Macdonald polynomials}


\begin{abstract}
Recently James Martin \cite{Martin} introduced \emph{multiline queues}, and used them to give a combinatorial formula for the stationary distribution of the multispecies asymmetric simple  exclusion process (ASEP) on a circle. The ASEP is a model of particles hopping on a one-dimensional lattice, which was introduced around 1970 \cite{bio, Spitzer}, and has been extensively studied in statistical mechanics, probability, and combinatorics. In this article we give an independent proof of Martin's result, and we show that by introducing additional statistics on multiline queues, we can use them to give a new combinatorial formula for both the symmetric Macdonald polynomials $P_{\lambda}(\mathbf{x}; q, t)$, and the nonsymmetric Macdonald polynomials $E_{\lambda}(\mathbf{x}; q, t)$, where $\lambda$ is a partition. This formula is rather different from others that have appeared in the literature \cite{HHL2}, \cite{RamYip}, \cite{Lenart}. Our proof uses results of Cantini, de Gier, and Wheeler \cite{CGW}, which recently linked the multispecies ASEP on a circle to Macdonald polynomials.
\end{abstract}

\maketitle
\setcounter{tocdepth}{1}
\tableofcontents

\section{Introduction and results}

Introduced in the late 1960's \cite{bio, Spitzer},
the \emph{asymmetric simple exclusion process} (ASEP) is a model
of interacting
particles hopping left and right on a one-dimensional lattice of $n$
sites. There are many versions of the ASEP: the lattice might be a lattice with open boundaries,
or a ring, among others; and we may allow multiple species of particles with different ``weights".
In this article, we will be concerned with the multispecies ASEP on a ring, 
where the rate of two adjacent particles swapping places is either $1$ or $t$, depending
on their relative weights.
Recently James Martin \cite{Martin} gave a combinatorial formula in terms of 
\emph{multiline queues} for the stationary distribution 
of this multispecies ASEP on a ring, building on his earlier joint work 
with Ferrari \cite{FerrariMartin}.

On the other hand, recent work of 
Cantini, de Gier, and Wheeler 
\cite{CGW} gave a link between the multispecies ASEP on a ring and \emph{Macdonald polynomials}.
Symmetric Macdonald polynomials $P_{\lambda}(\mathbf{x}; q, t)$ \cite{Macdonald} are a family of multivariable orthogonal polynomials
indexed by partitions, whose coefficients depend on two parameters $q$ and $t$; they generalize
multiple important families of polynomials, 
including Schur polynomials (at $q=t$, or equivalently, at $q=t=0$)  and
Hall-Littlewood polynomials (at $q=0$).
\emph{Nonsymmetric Macdonald polynomials} \cite{Cher1, MacdonaldBourbaki} were introduced shortly 
after the introduction of Macdonald polynomials, and defined in terms of 
\emph{Cherednik operators}; the symmetric Macdonald polynomials can be constructed from 
their nonsymmetric counterparts.

There has been a lot of work devoted to understanding Macdonald polynomials from a combinatorial
point of view.
Haglund-Haiman-Loehr \cite{HHL2, HHL1} gave a combinatorial formula for the 
\emph{transformed Macdonald polynomials} $\tilde{H}_{\mu}(\mathbf{x}; q, t)$
(which are connected to the 
 geometry of the Hilbert scheme
\cite{HaimanHilbert}) as well as for the \emph{integral forms}
$J_{\mu}(\mathbf{x}; q, t)$, which are scalar multiples of the classical monic
forms $P_{\mu}(\mathbf{x}; q, t)$.  
They also 
 gave a formula for the nonsymmetric  Macdonald polynomials
\cite{HHL3}.  Building on work of Schwer \cite{Schwer}, 
Ram and Yip \cite{RamYip} gave general-type 
formulas for both the Macdonald polynomials 
$P_{\lambda}(\mathbf{x}; q, t)$ and the nonsymmetric Macdonald polynomials; 
however, their type $A$ formulas have many terms.  
Lenart \cite{Lenart} showed how to ``compress" the Ram-Yip formula in type 
A to obtain a Haglund-Haiman-Loehr type formula for the polynomials
$P_{\lambda}(\mathbf{x}; q, t)$.  (However, for technical reasons,
his paper only treats the case where 
 $\lambda$ is regular, i.e. the parts of $\lambda$ are distinct.)
Finally,  Ferreira \cite{thesis}  
and Alexandersson \cite{A} gave Haglund-Haiman-Loehr type formulas for 
\emph{permuted basement Macdonald polynomials}, which generalize the 
nonsymmetric Macdonald polynomials.

The main goal of this article is to 
define some polynomials combinatorially 
in terms of multiline queues which simultaneously
compute the stationary distribution of the multispecies ASEP and also 
symmetric Macdonald 
polynomials $P_{\lambda}(\mathbf{x}; q, t)$.  More specifically, we introduce some polynomials
$F_{\mu}(x_1,\dots,x_n; q, t) = F_{\mu}(\mathbf{x};q,t) \in \Z[x_1,\dots,x_n](q,t)$ 
which are certain weight-generating functions
for multiline queues with bottom row $\mu$, where $\mu=(\mu_1,\dots,\mu_n)$ is an arbitrary weak composition.  
We show that these polynomials have the following properties:
\begin{enumerate}
\item \label{one} When $x_1=\dots=x_n=1$ and $q=1$, $F_{\mu}(\mathbf{x}; q, t)$ is proportional to the steady state probability that the multispecies ASEP is in state $\mu$.
(This recovers a result of Martin \cite{Martin}, but our proof is independent of his.) 
\item \label{two} 
When $\mu$ is a partition, $F_{\mu}(\mathbf{x}; q, t)$ is equal to the nonsymmetric Macdonald
polynomial $E_{\mu}(\mathbf{x}; q, t)$. 
\item \label{three} 
For any partition $\lambda$, the quantity $Z_{\lambda}(\mathbf{x}; q, t) := 
		\sum_{\mu} F_{\mu}(\mathbf{x}; q, t)$ 
		(where the sum is over all distinct compositions obtained by permuting the parts of 
$\lambda$) 
		is equal to the symmetric Macdonald polynomial $P_{\lambda}(\mathbf{x}; q, t)$.
\end{enumerate}  

In the remainder of the introduction 
we will make the above statements more precise.

\subsection{The multispecies ASEP}

We start by defining the multispecies ASEP or the $L$-ASEP as a Markov chain on the 
cycle $\Z_n$ with $L$ classes of particles as well as holes. The $L$-ASEP on a ring is a natural generalization for the two-species ASEP; for the latter, 
solutions were given using a matrix product formulation in terms of a quadratic algebra similar to the matrix ansatz described in \cite{DEHP}. 

For the $L$-ASEP when $t=0$ (i.e. particles only hop in one direction), 
Ferrari and Martin \cite{FerrariMartin} proposed 
a combinatorial solution for the stationary distribution using multiline queues. This construction was restated as a matrix product solution in 
\cite{EvansFerrariMallick08} and was generalized to 
the partially asymmetric case ($t$ generic) in 
\cite{ProlhacEvansMallick09}.
In \cite{AritaAyyerMallickProlhac12} 
the authors explained how to construct an explicit 
representation of the algebras involved in the $L$-ASEP.
Finally James Martin \cite{Martin}
gave an ingenious combinatorial solution
for the stationary distribution of the $L$-ASEP when $t$ is generic,
using more general multiline queues and building on ideas from  \cite{FerrariMartin} and \cite{EvansFerrariMallick08}. 

\begin{definition}
Let $\lambda=(\lambda_1\geq\cdots\geq\lambda_n\geq 0)$ be a partition.  We
let  $S_n(\lambda)$ denote the set of all distinct weak compositions $\mu$
	obtained by permuting the parts of $\lambda$.
\end{definition}
For example, if $\lambda = (2,2,1)$, then 
$S_3(\lambda) = \{(2,2,1), (2,1,2), (1,2,2)\}$.

\begin{definition}\label{def:ASEP}
Let $\lambda = (\lambda_1 \geq \lambda_2 \geq \cdots \geq \lambda_n \geq 0)$
be a partition with greatest part $\lambda_1 = L$, 
        and let $t$ be a constant such that $0 \leq t \leq 1$.
Our state space will be $S_n(\lambda)$; 
note that we consider indices of $\mu\in S_n(\lambda)$ 
modulo $n$; i.e.
if $\mu=\mu_1\ldots \mu_n$ is a composition, then $\mu_{n+1}=\mu_1$.
The \emph{multispecies asymmetric simple exclusion process}
        $\ASEP(\lambda)$ on a ring
        is the Markov chain on $S_n(\lambda)$
        with transition probabilities:
\begin{itemize}
	\item If $\mu = A i j B$ and $\nu = A j i B$ are in $S_n(\lambda)$
		(here $A$ and $B$ are words in the parts of $\lambda$), 
then 
$P_{\mu,\nu} = \frac{t}{n}$ if $i>j$ and $P_{\mu,\nu} = \frac{1}{n}$ if $i<j$.
\item Otherwise $P_{\mu,\nu} = 0$ for $\nu \neq \mu$ and
$P_{\mu,\mu} = 1-\sum_{\mu \neq \nu} P_{\mu,\nu}$.
\end{itemize}
We think of the $1$'s, $2$'s, \dots, $L$'s as representing various types of
particles of different weights;
each $0$ denotes an empty site.
See \cref{parameters1}.
\end{definition}

\begin{figure}[!ht]
 \centerline{\includegraphics[height=1in]{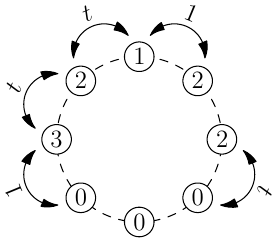}}
\centering
 \caption{A state in the multispecies ASEP on the lattice $\Z_8$. There is 
one particle of type $3$, three particles of type $2$, one particle of type $1$, and three holes, so we refer to this Markov chain as $\ASEP(3, 2,2,2,1,0,0,0)$. The rates $1$ and $t$ represent probabilities $1/8$ and $t/8$ respectively of swapping the corresponding particles.}
\label{parameters1}
 \end{figure}

 
 \begin{remark}
Note that in the literature on the ASEP, the hopping rate is often denoted by $q$.  We are using $t$ here instead in order to be consistent with the notation of \cite{CGW-arxiv, CGW}, and to make contact with the literature on Macdonald polynomials. Furthermore, the convention used in \cite{FerrariMartin, Martin} swaps the roles of 1 and $t$ in our \cref{def:ASEP}.  
\end{remark}

\subsection{Multiline queues}
\label{1point2}

We now define ball systems and multiline queues. These concepts are due to Ferrari and Martin \cite{FerrariMartin} 
for the case $t=0$ and $q=1$ and to Martin \cite{Martin} for the case $t$ general and $q=1$.
\begin{definition}\label{def:MLQ}
Fix positive integers $L$ and $n$.
A \emph{ball system} $B$ 
is an $L \times n$ array
in which each of the $Ln$ positions is either empty or occupied by a ball.
We number the rows from bottom to top from $1$ to $L$,
and the columns from left to right from $1$ to $n$.  Moreover we require
that there is at least one ball in the top row,
and that the
 number of balls in each row is weakly increasing from top to bottom.
See \cref{parameters2} for an example.
\end{definition}
\begin{figure}[!ht]
  \centerline{\includegraphics[height=1in]{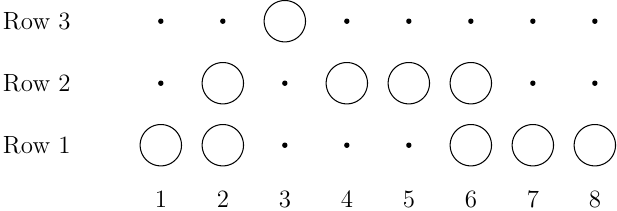}}
\centering
 \caption{
A ball system.}
\label{parameters2}
 \end{figure}

\begin{definition}
Given an $L \times n$ ball system $B$, a multiline queue $Q$ (for $B$)
is, for each row $r$ where $2 \leq r \leq L$, a matching of balls
from row $r$ to row $r-1$.  A ball $b$ may be matched to any ball $b'$
in the row below it; we connect $b$ and $b'$ by a shortest strand that 
travels either straight down or 
from left to right (allowing the strand to wrap around the cylinder
if necessary).
Here the balls are matched by the following 
algorithm:
\begin{itemize}
\item We start by matching 
 all balls in row $L$ to a collection of balls (their partners)
in row 
$L-1$. We then match those partners in row $L-1$ to new partners in row $L-2$, 
and so on.  This determines a set of balls, each of which we label by $L$.
\item  We then take the unmatched balls in row $L-1$ and match them 
to partners in row $L-2$.  We then match those partners in row $L-2$ to 
new partners in row $L-3$, and so on.  This determines a set of balls,
each of which we label by $L-1$.
\item We continue in this way, determining a set of balls labeled 
$L-2$, $L-3$, and so on, and finally we label any unmatched balls in row
$1$ by $1$.
\item If at any point there's a free (unmatched) 
ball $b'$ directly underneath the ball $b$ we're matching,
we must match $b$ to $b'$.  We say that $b$ and $b'$ are
\emph{trivially paired}.
\end{itemize}
Let $\mu= (\mu_1,\dots,\mu_n) \in \{0,1,\dots,L\}^n$ be the labeling
of the balls in row $1$ at the end of this process (where
an empty position is denoted by $0$).  We then say that 
$Q$ is a \emph{multiline queue of type $\mu$}, and we call $\MLQ(\mu)$ the set of all multiline queues of type $\mu$.
See \cref{fig:MLQ_example} 
	for an example.
\end{definition}
\begin{figure}[!ht]
  \centerline{\includegraphics[height=1in]{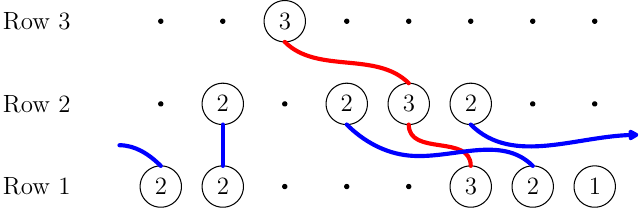}}
\centering
 \caption{
A multiline queue of type $(2,2,0,0,0, 3,2,1)$.}
\label{fig:MLQ_example}
 \end{figure}

\begin{remark}
Note that the induced labeling on the balls satisfies the following
properties:
\begin{itemize}
\item If ball $b$ with label $i$ is directly above 
ball $b'$ with label $j$, then we must have $i \leq j$.
\item Moreover if $i=j$, then those two balls are matched to each other.
\end{itemize}
\end{remark}

We now define the weight of each multiline queue. Here we generalize Martin's ideas \cite{Martin} by adding parameters
$q$ and $x_1,\ldots ,x_n$.
\begin{definition}\label{def:wt}
Given a multiline queue $Q$, we let 
$m_i$ be the number of balls in column $i$.
We define the \emph{$\mathbf x$-weight} of $Q$ to be 
$\wt_x(Q) = x_1^{m_1} x_2^{m_2} \dots x_n^{m_n}$.

We also define the \emph{$q,t$-weight} of $Q$ by associating a weight to each nontrivial pairing $p$ of balls. These weights are computed in order as follows. Consider the nontrivial pairings between rows $r$ and $r-1$.  We read the balls in row $r$ in decreasing order of their label (from $L$ to $r$); within a fixed label, we read the balls from right to left.  As we read the balls in this order, we imagine placing the strands pairing the balls one by one.  The balls that have not yet been matched are considered  \emph{free}. If pairing $p$ matches ball $b$ in row $r$ and column $c$ to ball $b'$ in row $r-1$ and column $c'$, then the free balls in row $r-1$ and columns $c+1, c+2,\dots, c'-1$  (indices considered modulo $n$) are considered \emph{skipped}. When pairing balls of label $i$ between rows $r$ and $r-1$, trivially paired balls of label $i$ in row $r-1$ are not considered free. Let $i$ be the label of balls $b$ and $b'$. We then associate to pairing $p$ the weight
\[
\wt_{q,t}(p) = \begin{cases}
\frac{(1-t) t^{\# \skipped}}{1-q^{i-r+1}t^{\# \free}}\cdot q^{i-r+1}
&\mbox {if $c'<c$}\\
\frac{(1-t) t^{\# \skipped}}{1-q^{i-r+1}t^{\# \free}}
&\mbox{if $c'>c$}.
\end{cases}
\]
Note that the extra factor $q^{i-r+1}$ appears precisely when 
the strand connecting $b$ to $b'$ wraps around the cylinder.

Having associated a $q,t$-weight to each nontrivial pairing of balls,
we define  the $q,t$-weight of the multiline queue $Q$ to be 
$$\wt_{q,t}(Q) = \prod_p \wt_{q,t}(p),$$
where the product is over all nontrivial pairings of balls in $Q$.

Finally the \emph{weight} of $Q$ is defined to be 
$$\wt(Q) = \wt_{x}(Q) \wt_{q,t}(Q).$$
\end{definition}

\begin{example}
In \cref{fig:MLQ_example}, 
the $\mathbf x$-weight of the multiline queue 
$Q$ is 
$x_1 x_2^2 x_3 x_4 x_5 x_6^2 x_7 x_8$.

The weight of the unique pairing between row $3$ and row $2$ is 
$\frac{(1-t)t}{1-qt^4}$.  The weight of the pairing of balls labeled
$3$ between row $2$ and $1$ is 
$\frac{(1-t)}{1-q^2 t^5}$, and the weights of the pairings of balls
labeled $2$ are 
$\frac{(1-t) t^2}{1-qt^3} \cdot q$ and 
$\frac{1-t}{1-qt^2}$.  
Therefore 
$$\wt(Q) = 
x_1 x_2^2 x_3 x_4 x_5 x_6^2 x_7 x_8
\cdot \frac{(1-t)t}{1-qt^4}
\cdot \frac{(1-t)}{1-q^2 t^5}
\cdot \frac{(1-t) t^2}{1-qt^3} \cdot q
\cdot \frac{1-t}{1-qt^2}.  $$
\end{example}

We now define the weight-generating function for multiline queues
of a given type, as well as the \emph{combinatorial partition function}
for multiline queues.
\begin{definition}\label{def:Fmu}
Let $\mu = (\mu_1,\dots, \mu_n) \in \{0,1,\dots, L\}^n$ be a weak composition with largest part $L$.  
We set 
\[
F_{\mu} = F_{\mu}(x_1,\dots,x_n; q, t)  = F_{\mu}({\bf x}; q, t) 
 = \sum_Q \wt(Q),\] 
 where the sum is over all $L \times n$ multiline queues of type $\mu$.

Let $\lambda = (\lambda_1 \geq \lambda_2 \geq \dots \geq \lambda_n \geq 0)$
be a partition with $n$ parts and largest part $L$.
We set 
$$Z_{\lambda} = Z_{\lambda}(x_1,\dots,x_n; q, t) = 
	Z_{\lambda}({\bf x}; q, t) = \sum_{\mu\in S_n(\lambda)} F_{\mu}(x_1,\dots,x_n; q, t).$$
	We call 
$Z_{\lambda}$ the \emph{combinatorial partition function} for multiline queues.
\end{definition}


\subsection{The main results}

The goal of this article is to show that with the refined statistics given
in \cref{def:wt}, we can use multiline queues to give formulas for 
Macdonald polynomials.  We also obtain a new proof of Martin's result that multiline queues give
steady state probabilities in the multispecies ASEP.

\begin{proposition}\label{prop:nonsymmetric}
Let $\lambda$ be a \emph{partition}.
	Then the nonsymmetric Macdonald
polynomial $E_{\lambda}(\mathbf{x}; q, t)$ is equal to 
 the quantity $F_{\lambda}(\mathbf{x}; q, t)$ 
from \cref{def:Fmu}.
\end{proposition}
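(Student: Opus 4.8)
The plan is to prove \cref{prop:nonsymmetric} by showing that $F_\lambda(\mathbf{x};q,t)$ satisfies the same recursive characterization that uniquely determines the nonsymmetric Macdonald polynomial $E_\lambda(\mathbf{x};q,t)$. Recall that the $E_\mu$, as $\mu$ ranges over all compositions, can be built up from the base case $E_0 = 1$ (the empty composition, or the all-zeros composition) by two operations: a \emph{Hecke-operator step} $T_i$ that exchanges two adjacent entries $\mu_i,\mu_{i+1}$ with $\mu_i < \mu_{i+1}$, producing $E_{s_i\mu}$ as an explicit $q,t$-linear combination of $E_\mu$ and $T_i E_\mu$; and an \emph{affine raising step} $\Phi$ that cyclically shifts $\mu \mapsto (\mu_n+1, \mu_1,\dots,\mu_{n-1})$ together with the variable shift sending $E_\mu(x_1,\dots,x_n)$ to an $x$- and $q$-twist of $E_{\Phi\mu}$. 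So the strategy has three parts: (i) establish these two recursions on the combinatorial side, for $F_\mu$ with $\mu$ an \emph{arbitrary} composition — not just a partition — since the recursions pass through non-partition compositions; (ii) check that the base case $F_{(0,\dots,0)} = 1$ holds (the only multiline queue over the empty ball system contributes weight $1$); and (iii) conclude by the uniqueness of the solution to this recursive system that $F_\mu = E_\mu$ for all compositions $\mu$, and in particular $F_\lambda = E_\lambda$.

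First I would set up the affine/cyclic step, which I expect to be the more tractable of the two. Adding a new ball to the bottom row in column $n$ (or rather, cyclically rotating and incrementing) corresponds on multiline queues to prepending a new row or shifting columns; one tracks how $\wt_x$ changes (a factor of $x_n$ for the new ball, times the cyclic relabeling of the $x_i$) and how $\wt_{qt}$ changes (the new pairings introduced, and how the free/skipped counts shift). The key point is that the $qt$-weight formula in \cref{def:wt}, with its denominators $1 - q^{i-r+1}t^{\#\free}$ and the wrap-around factor $q^{i-r+1}$, has been \emph{designed} so that summing over the placements of the new strand telescopes into exactly the $q$-deformed factor appearing in the affine intertwiner $\Phi$ for $E_\mu$. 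This is essentially a geometric-series identity: $\sum_k t^k/(1 - q t^{N})\cdots$ collapses, and the $\#\free$ in the denominator is what makes the partial fractions work.

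The main obstacle will be step (i) for the Hecke operator $T_i$: showing that $F_\mu$ transforms correctly under swapping two adjacent bottom-row entries $\mu_i = a < b = \mu_{i+1}$. Here one must partition the multiline queues of type $\mu$ according to the local configuration of strands landing in (or passing over) columns $i$ and $i+1$ of the bottom row, and exhibit an explicit weight-preserving-up-to-a-factor involution or bijection matching them against multiline queues of type $s_i\mu$. The subtlety is that the matching algorithm in \cref{def:MLQ} is global — changing the bottom-row labels can cascade upward through the trivial-pairing rule and the decreasing-label reading order used to compute $\#\skipped$ — so one has to argue that in fact only a bounded local region is affected, and then verify that the weights of the two sides are related by the precise Hecke eigenvalue-type identity
$$
(1-t)\,E_\mu + \bigl(\text{something}\bigr) = (1-q^{\dots}t^{\dots})\, E_{s_i\mu},
$$
i.e. that the combinatorial $F$'s satisfy $T_i F_\mu = \frac{\cdots}{\cdots}F_{s_i\mu} + \cdots F_\mu$ with the correct coefficients. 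Once both recursions and the base case are in hand, part (iii) is immediate: the two operations generate all compositions from $0$, the recursions determine each $F_\mu$ uniquely in terms of earlier ones, the $E_\mu$ satisfy the identical recursions, so $F_\mu = E_\mu$ termwise.

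A remark on an alternative route: one could instead avoid the Hecke step entirely by first proving \cref{prop:nonsymmetric} only for the single variable-free specialization via \cite{CGW} and the ASEP connection, then bootstrapping — but that would not give property \eqref{two} in full $\mathbf x$-generality, so I prefer to push the direct recursive argument through, isolating the Hecke-operator computation as the technical heart of the proof.
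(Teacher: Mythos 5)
Your overall architecture --- verify that $F$ satisfies a system of relations known to characterize the nonsymmetric Macdonald polynomials, then invoke uniqueness --- is in the spirit of the paper, and you are right that the Hecke-operator step is the technical heart. But the specific characterization you chose rests on a false premise: it is \emph{not} true that $F_\mu = E_\mu$ for all compositions $\mu$, and your step (iii) asserts exactly that. The paper states \cref{prop:nonsymmetric} only for partitions for a reason: by \cref{prop:PB}, for a general composition $\mu$ the polynomial $F_\mu$ is a \emph{permuted basement} Macdonald polynomial $E^{\sigma}_{\inc(\mu)}$ with a basement $\sigma$ depending on $\mu$, and this coincides with $E_\mu$ only when $\mu$ is a partition. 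Concretely, the family $\{F_\mu\}$ satisfies the qKZ exchange relations of \cref{thm:123}: $T_i F_\mu = F_{s_i\mu}$ when $\mu_i>\mu_{i+1}$, equivalently $F_{s_i\nu} = t^{-1}(T_i + 1 - t)F_\nu$ when $\nu_i<\nu_{i+1}$, with a coefficient independent of $\nu$. The Knop--Sahi intertwiner recursion you propose to verify instead has the form $E_{s_i\nu} = \bigl(T_i + \frac{1-t}{1-q^{a}t^{b}}\bigr)E_\nu$, where the exponents $a,b$ depend on the spectral vector of $\nu$. These two systems produce the same polynomials only at dominant compositions, and the recursion from the zero composition up to a partition $\lambda$ necessarily passes through non-partition compositions (for instance one reaches $(2,0)$ as $\Phi(0,1)$). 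At those intermediate steps the identity you would need to check combinatorially is simply false for multiline queues, so the induction cannot close.

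A second, independent problem is the affine raising step. The cyclic relation the paper actually proves, \eqref{c}, does \emph{not} increment $\mu_n$: it is a pure column rotation of the multiline queue, established by an easy weight-preserving bijection in \cref{circular}. Your $\Phi\colon\mu\mapsto(\mu_n+1,\mu_1,\dots,\mu_{n-1})$ adds a ball to the queue (and possibly a new row when $\mu_n+1$ exceeds the old maximum), which globally rearranges the pairing structure; this is far from ``the more tractable'' step and is nowhere established in the paper. What the paper does instead is to prove the three exchange relations \eqref{a}--\eqref{c} (Sections \ref{circular} and \ref{inductive}), deduce from them via the Cherednik operators that $F_\lambda$ is a simultaneous eigenfunction with the same eigenvalues as $E_\lambda$ and hence a scalar multiple of it (\cref{Elemma}), and then fix the scalar by comparing the coefficient of $\mathbf{x}^\lambda$ (\cref{monic}). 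If you want to salvage your plan, replace the Knop--Sahi system by the qKZ system \eqref{a}--\eqref{c} and add the eigenfunction and monicity steps at the end; the raising operator $\Phi$ is then not needed at all.
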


\begin{theorem}
	\label{thm:main}
Let $\lambda$ be a partition.
	Then the symmetric Macdonald polynomial $P_{\lambda}(\mathbf{x}; q, t)$
	is equal to 
 the quantity  $Z_{\lambda}(\mathbf{x}; q, t)$ 
from \cref{def:Fmu}.
\end{theorem}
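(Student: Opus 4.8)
The plan is to deduce \cref{thm:main} from \cref{prop:nonsymmetric} together with the known relation between symmetric and nonsymmetric Macdonald polynomials. Recall that the symmetric Macdonald polynomial $P_\lambda(\mathbf{x};q,t)$ can be written as a positive (in a suitable sense) combination of the nonsymmetric $E_\mu(\mathbf{x};q,t)$ as $\mu$ ranges over the permutations of $\lambda$; more precisely, there is a Hecke-algebra symmetrization operator taking $E_\lambda$ to $P_\lambda$, and a classical identity (going back to Macdonald and to Marshall, and used heavily by Haglund--Haiman--Loehr) expressing $P_\lambda$ as $\sum_{\mu} c_\mu E_\mu$ where the sum is over compositions $\mu$ that are permutations of $\lambda$ and the coefficients $c_\mu$ are explicit rational functions in $q,t$. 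The cleanest route is to invoke the specific form of this identity in which \emph{all} the coefficients $c_\mu$ equal $1$ once one uses the correct normalization of $E_\mu$; indeed the monic nonsymmetric Macdonald polynomials satisfy $P_\lambda = \sum_{\mu \sim \lambda} E_\mu$ when $\lambda$ is a partition (this is the statement one finds, e.g., in the work of Cantini--de Gier--Wheeler and in Marshall's thesis). Since by \cref{def:Fmu} we have $Z_\lambda = \sum_{\mu \sim \lambda} F_\mu$ and by \cref{prop:nonsymmetric} each $F_\mu = E_\mu$ for $\mu$ a permutation of a partition, the two sides match term by term.

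The steps, in order: first, I would carefully state the summation identity $P_\lambda(\mathbf{x};q,t) = \sum_{\mu} E_\mu(\mathbf{x};q,t)$, where the sum is over all distinct compositions $\mu$ obtained by permuting the parts of the partition $\lambda$, and cite its source (and, if the paper uses a specific normalization of $E_\mu$, verify that \cref{prop:nonsymmetric} is stated in that same normalization). Second, I would note that \cref{prop:nonsymmetric} applies to each such $\mu$: although \cref{prop:nonsymmetric} is literally stated for $\lambda$ a partition, the object $F_\mu$ is defined for arbitrary compositions $\mu$, and the summation identity above only requires the values $E_\mu$ for $\mu$ a permutation of a partition; so I must make sure the extension of \cref{prop:nonsymmetric} from partitions $\lambda$ to their permutations $\mu$ is available — this is presumably \cref{two} in the list of properties in the introduction, or follows from a more general proposition proved later, and I would point to wherever $F_\mu = E_\mu$ is established for general compositions. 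Third, I would combine: $Z_\lambda = \sum_{\mu \sim \lambda} F_\mu = \sum_{\mu \sim \lambda} E_\mu = P_\lambda$, completing the proof.

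The genuine content, of course, is \cref{prop:nonsymmetric}, which is assumed here; given that, \cref{thm:main} is essentially a formal consequence. The one subtlety I would watch for — and which I expect to be the only real obstacle in writing this argument cleanly — is the normalization question: there are several conventions for $E_\mu$ (monic versus the Cherednik-operator eigenfunction normalization, and several conventions for $P_\lambda$), and the identity $P_\lambda = \sum_{\mu \sim \lambda} E_\mu$ holds on the nose only for a particular matched pair of normalizations. I would therefore take care to fix these conventions explicitly (matching those of \cite{CGW} and \cite{HHL3}) before invoking the identity, and double-check that \cref{prop:nonsymmetric} and \cref{def:Fmu} are consistent with that choice. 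If the paper instead prefers to prove \cref{thm:main} directly — by showing $Z_\lambda$ is symmetric, has the right leading term $m_\lambda + (\text{lower})$, and is orthogonal to lower $m_\nu$ under the Macdonald inner product, thereby characterizing $P_\lambda$ — that is a viable but substantially longer alternative; I would favor the short deduction from \cref{prop:nonsymmetric} and the symmetrization identity.
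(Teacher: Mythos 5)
Your reduction does not go through, because both identities it rests on are false in the normalizations the paper uses. First, with the monic normalization $E_{\mu}=\mathbf{x}^{\mu}+\sum_{\nu<\mu}b_{\mu\nu}\mathbf{x}^{\nu}$ of \cref{def:nonsymmetric} (the one in which \cref{prop:nonsymmetric} is stated), the summation identity $P_{\lambda}=\sum_{\mu\sim\lambda}E_{\mu}$ is simply not true: already for $n=2$ and $\lambda=(1,0)$ one has $E_{(1,0)}=x_1$ and $E_{(0,1)}=x_2+\tfrac{1-t}{1-qt}\,x_1$, so $E_{(1,0)}+E_{(0,1)}$ is not even symmetric in $x_1,x_2$, while $P_{(1,0)}=x_1+x_2$. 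The correct expansion $P_{\lambda}=\sum_{\mu}c_{\mu}(q,t)E_{\mu}$ has genuinely nontrivial coefficients. Second, the extension of \cref{prop:nonsymmetric} to non-partition compositions that you hope to locate does not exist: $F_{\mu}=E_{\mu}$ only when $\mu$ is a partition. For general $\mu$ the paper proves (\cref{prop:PB}) that $F_{\mu}$ equals a \emph{permuted basement} Macdonald polynomial $E_{\inc(\mu)}^{\sigma}$, which is a different polynomial from $E_{\mu}$ --- not even a scalar multiple of it: in the example above $F_{(0,1)}=x_2$, whereas $E_{(0,1)}=x_2+\tfrac{1-t}{1-qt}\,x_1$ (and $x_2$ is not a Cherednik eigenfunction). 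So both equalities in your chain $Z_{\lambda}=\sum F_{\mu}=\sum E_{\mu}=P_{\lambda}$ fail; the theorem is true only because the two failures compensate in the sum, which is exactly what needs proof. No choice of normalization rescues the term-by-term matching, since a rescaling cannot turn $x_2$ into $x_2+\tfrac{1-t}{1-qt}x_1$.

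The paper's actual argument is the one you relegate to a fallback, implemented via the Hecke algebra rather than via orthogonality. The real content is \cref{thm:123}: the family $\{F_{\mu}\}$ satisfies the qKZ exchange relations \eqref{a}--\eqref{c}, proved by a combinatorial induction on multiline queues in Sections \ref{circular} and \ref{inductive}. Given that, \cref{lem:Macdonald} finishes the job: the relations place $\sum_{\mu}F_{\mu}$ in the Hecke module $V_{\lambda}$ spanned by $\{E_{\nu}:\nu\sim\lambda\}$, the identity $T_i(F_{\mu}+F_{s_i\mu})=t(F_{\mu}+F_{s_i\mu})$ shows the sum is $S_n$-invariant, and the $\mathbf{x}^{\lambda}$-coefficient is $1$; Macdonald's characterization of $P_{\lambda}$ as the unique such element of $V_{\lambda}$ then gives $Z_{\lambda}=P_{\lambda}$ (with \cref{monic} pinning the normalization). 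In short, \cref{thm:main} is not a formal consequence of \cref{prop:nonsymmetric}; it requires the full exchange-relation machinery, and any correct proof must engage with \cref{thm:123}.
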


See \cref{fig:E} for an example illustrating \cref{prop:nonsymmetric}.
\begin{figure}[!ht]
  \centerline{\includegraphics[width=\linewidth]{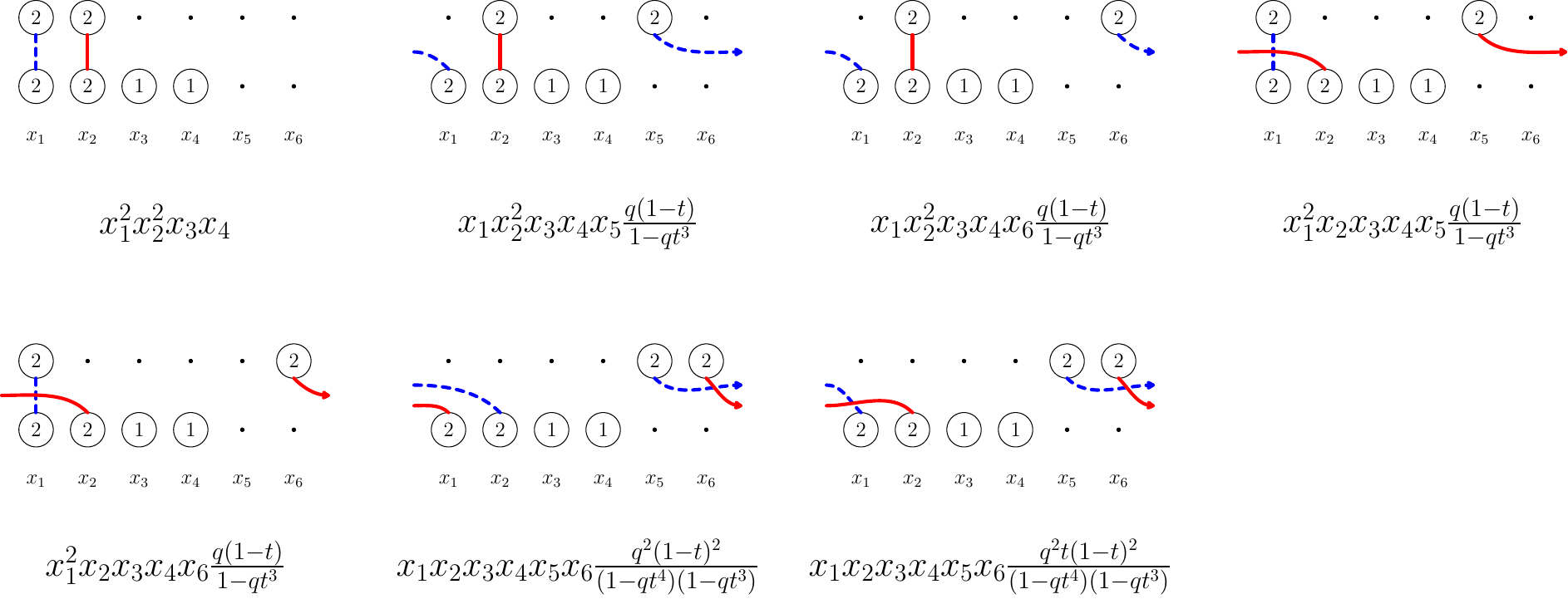}}
\centering
\caption{The generating function for the multiline queues of type 
$(2,2,1,1,0,0)$ gives an expression for the nonsymmetric Macdonald 
polynomial $E_{(2,2,1,1,0,0)}({\mathbf x}; q, t)$.}
\label{fig:E}
 \end{figure}

Although he used slightly different conventions, the following result is essentially
the same as the main result of \cite{Martin}.
\begin{cor} \label{main:prob}
	Let $\lambda$ be a partition and let $\mu$ be a composition obtained by 
	rearranging the parts of $\lambda$.
	Set $x_1 = \dots = x_n = q=1$ in $F_{\mu}$ and $Z_{\lambda}$.  Then the 
	steady state probability of being in state $\mu$ of $\ASEP(\lambda)$ is
	$\frac{F_{\mu}}{Z_{\lambda}}$.
\end{cor}

We also show in \cref{prop:PB} that 
for any composition $\mu$, 
the polynomial $F_{\mu}(\mathbf{x}; q, t)$  is equal to 
a \emph{permuted basement Macdonald polynomial}.  
Using \cref{prop:PB} 
and 
 \cref{thm:main}, we obtain
the following corollary.
\begin{cor}
	The Macdonald polynomial $P_{\lambda}(\mathbf{x}; q, t)$  can 
	be expressed as
	$$P_{\lambda}(\mathbf{x}; q, t) = \sum_{\mu \in S_n(\lambda)}
E_{\inc(\mu)}^{\sigma},$$
where 
$E_{\inc(\mu)}^{\sigma}$ is a \emph{permuted basement
Macdonald polynomial} \cite{thesis, A}, 
$\inc(\mu)$ is the sorting of the parts of $\mu$ in increasing order,
and $\sigma$ is the longest permutation such that 
$\mu_{\sigma(1)} \leq \mu_{\sigma(2)} \leq 
\dots \leq \mu_{\sigma(n)}$.
\end{cor}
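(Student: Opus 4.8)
The plan is to combine \cref{thm:main} with \cref{prop:PB}, whose statement identifies $F_{\mu}(\mathbf{x};q,t)$ with a permuted basement Macdonald polynomial for an arbitrary composition $\mu$. First I would unwind the definition of $Z_{\lambda}$ from \cref{def:Fmu}: by that definition $Z_{\lambda}(\mathbf{x};q,t) = \sum_{\mu} F_{\mu}(\mathbf{x};q,t)$, where the sum runs over the distinct rearrangements $\mu$ of the parts of $\lambda$. By \cref{thm:main} the left-hand side equals $P_{\lambda}(\mathbf{x};q,t)$, so it remains only to rewrite each summand $F_{\mu}$ in the desired permuted-basement form.

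Next I would invoke \cref{prop:PB}: for each composition $\mu$ appearing in the sum, $F_{\mu}(\mathbf{x};q,t) = E^{\sigma}_{\inc(\mu)}(\mathbf{x};q,t)$, where $\inc(\mu)$ is the weakly increasing rearrangement of $\mu$ and $\sigma$ is the relevant permutation carrying $\inc(\mu)$ back to $\mu$. Here one must be careful about the precise convention: the corollary asserts that $\sigma$ is the \emph{longest} permutation with $\mu_{\sigma(1)}\le \mu_{\sigma(2)}\le\dots\le\mu_{\sigma(n)}$, i.e.\ the maximal-length representative of the coset sorting $\mu$ into increasing order (this is forced to be unique, so $E^{\sigma}_{\inc(\mu)}$ is well defined even when $\mu$ has repeated parts). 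I would check that this is exactly the $\sigma$ (or its image under whatever conventions translate between \cite{thesis} and \cite{A}) produced by \cref{prop:PB}, reconciling any discrepancy between left/right actions or the direction of the basement permutation — this bookkeeping is the only place where something could go subtly wrong.

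Substituting the expression from \cref{prop:PB} into $Z_{\lambda} = \sum_{\mu} F_{\mu}$ and using \cref{thm:main} then yields
\[
P_{\lambda}(\mathbf{x};q,t) \;=\; \sum_{\mu} E^{\sigma}_{\inc(\mu)},
\]
with the sum over distinct rearrangements $\mu$ of $\lambda$ and $\sigma$, $\inc(\mu)$ as specified, which is the assertion. The main obstacle is not mathematical depth but notational consistency: making sure the permutation $\sigma$ named in the corollary coincides with the one delivered by \cref{prop:PB} under the permuted-basement conventions of \cite{thesis, A}, and confirming that $\inc(\mu)$ rather than some other sorted form is the correct index. Once that identification is pinned down, the proof is a one-line substitution.
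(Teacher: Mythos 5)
Your proposal is correct and follows exactly the route the paper takes: the corollary is obtained by substituting the identification $F_{\mu}=E^{\sigma}_{\inc(\mu)}$ from \cref{prop:PB} into the sum $Z_{\lambda}=\sum_{\mu}F_{\mu}$ and applying \cref{thm:main}. Your extra caution about matching the convention for $\sigma$ is sensible but does not change the argument, since \cref{prop:PB} already specifies $\sigma$ as the longest permutation sorting $\mu$ increasingly, which is the same $\sigma$ named in the corollary.
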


\begin{remark}
It would be interesting to extend \cref{prop:nonsymmetric} to give a multiline queue formula
for all nonsymmetric Macdonald polynomials, not just those indexed by partitions. We leave this as an open problem.
\end{remark}

\begin{remark}
 The multispecies TASEP (i.e. the case $t=0$) and multiline queues have 
 been recently connected to the combinatorial $R$-matrix 
 and tensor products of KR-crystals
 \cite{KMO15, AasGrinbergS}.  Our main results are consistent
 with these results on KR-crystals, in view of the fact that Macdonald
 polynomials at $t=0$ agree with the graded characters of 
 KR-modules \cite{Lenart1, Lenart2}.
 \end{remark}


 \begin{remark}
A potentially useful probabilistic interpretation of a multiline queue when $q=1$ is as a  series of priority queues in discrete time with a Markovian service process. A single priority queue is made up of two rows, where the top row contains customers ordered by priority with the column containing each customer representing his arrival time (modulo $n$, the total number of columns). The bottom row of the queue contains services, such that the column containing a service represents the time the service occurs (modulo $n$). At his turn, a customer considers every service offered to him and declines an available service with probability $t$ and accepts with probability $1-t$ (with the exception that if the service occurs at the time of his arrival, then he accepts with probability 1). Once a service is accepted, the service is no longer available.

	 {Note that we allow a customer to decline all services, but then wrap around and consider the services again in order. Consequently, if $f$ is the number of free (available) services the customer is considering, and $0\leq s \leq f-1$, the probability of a customer accepting the next service immediately after declining $s \mod f$ services is 
\[
\sum_{n \geq 0} t^{s+nf}(1-t) = \frac{t^{s}(1-t)}{1-t^{f}}.
\] 
To match the weight of a pairing in \cref{def:wt}, set $f=\free$ and $s=\skipped$. 
}

It would be interesting to extend this 
	 interpretation to the case of generic $q$.
\end{remark}

\subsection{The Hecke algebra, ASEP, and Macdonald polynomials}

To explain the connection between the ASEP and Macdonald polynomials, and explain how we 
prove \cref{prop:nonsymmetric} and \cref{thm:main},  we need to introduce 
the Hecke algebra and recall some notions from
\cite{KasataniTakeyama} and
Cantini-deGier-Wheeler \cite{CGW}.

\begin{defn}
	The \emph{Hecke algebra} of type $A_{n-1}$ is the $\mathbb{C}$-algebra with generators $T_i$ for $1 \leq i \leq n-1$
and parameter $t$
which satisfies the following
relations:
\begin{equation}\label{Hecke}
(T_i-t)(T_i+1)=0, \qquad T_i T_{i \pm 1} T_i = T_{i \pm 1} T_i T_{i \pm 1}, \qquad
T_i T_j = T_j T_i \text{ when } |i-j|>1.
\end{equation}
\end{defn}

There is an action of the Hecke algebra on polynomials $f(x_1,\dots,x_n)$ which is defined as follows:
\begin{equation}\label{Heckeaction}
T_i=t-\frac{t x_i-x_{i+1}}{x_i-x_{i+1}}(1-s_i) 
\text{ for }1 \leq i \leq n-1,
\end{equation}

where the simple transposition $s_i$ acts on polynomials  by
\begin{equation}\label{action1}
s_i f(x_1,\ldots, x_i,x_{i+1},\ldots,x_n):=f(x_1,\ldots, x_{i+1},x_{i},\ldots,x_n).
\end{equation}  One can check that the operators 
\eqref{Heckeaction} satisfy the relations \eqref{Hecke}.

	We also define the action of the shift operator $\omega$ on polynomials via 
\begin{equation}\label{shift}
(\omega f)(x_1,\dots,x_n) = f(q x_n,x_1,\dots, x_{n-1}).
\end{equation}


Given a composition $\mu = (\mu_1,\dots,\mu_n)$, we let $|\mu|:=\sum \mu_i$.  
We also define 
\begin{align}\label{action2}
	s_i \mu &:= s_i (\mu_1,\dots,\mu_n) = (\mu_1,\dots, \mu_{i+1},\mu_i,\dots,\mu_n) \text { for }1\leq i \leq n-1, \text{ and }\\
	\omega \mu &:= \omega (\mu_1,\dots,\mu_n) = (\mu_n, \mu_1,\dots, \mu_{n-1}).\label{action3}
\end{align}

The following notion of \emph{qKZ family} was introduced in 
\cite{KasataniTakeyama}, also explaining the relationship of such 
polynomials to nonsymmetric Macdonald polynomials.
We use the conventions of 
	\cite[Definition 2]{CGW-arxiv}, see also 
 \cite[Section 1.3]{CGW}  
and \cite[(23)]{CGW}.

\begin{defn}\label{f-def} 
Fix a partition $\lambda = (\lambda_1,\dots,\lambda_n)$.
	We say that a family 
	$\{f_{\mu} \}_{\mu 
	\in S_n(\lambda)}$ 
	of homogeneous 
	degree $|\lambda|$ 
	polynomials in $n$ variables 
	$\mathbf{x} = (x_1,\dots,x_n)$, with coefficients which are rational functions of 
	$q$ and $t$, is a \emph{qKZ family} if they satisfy 
	\begin{align}
T_i f_{\mu}(\mathbf{x}; q,t) &= f_{s_i \mu} (\mathbf{x};q,t),
\text{ when }\mu_i > \mu_{i+1}, \label{firstproperty}\\
	T_i f_{\mu}(\mathbf{x}; q,t) &= t f_{\mu}(\mathbf{x}; q,t), \text{ when }\mu_i = \mu_{i+1}, \label{secondproperty}\\
q^{\mu_n} f_{\mu}(\mathbf{x}; q,t)
&= 
		f_{\mu_n,\mu_1,\dots,\mu_{n-1}}(q x_n,x_1,\dots,x_{n-1}; q,t).  \label{thirdproperty}
\end{align}
\end{defn}

\begin{remark}
Note that \eqref{thirdproperty} can be rephrased as 
\[
q^{\mu_n} f_{\mu}(\mathbf{x}; q,t)
= 
		(\omega f_{\mu_n,\mu_1,\dots,\mu_{n-1}})(\mathbf{x}; q,t).
		\]
\end{remark}

The following lemma explains the relationship of the $f_{\mu}$'s to the ASEP.
\begin{lemma}\label{ASEP-f}
	\cite[Corollary 1]{CGW-arxiv}.
	Consider the polynomials $f_{\mu}$ from \cref{f-def}.
	When $q=x_1=\dots=x_n=1$, 
 $f_{\mu}(1,\dots,1; 1, t)$ is proportional to the steady 
state probability that the multispecies ASEP is in state 
$\mu$.
\end{lemma}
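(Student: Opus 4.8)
\textbf{Proof strategy for \cref{ASEP-f}.} The plan is to treat this as essentially a corollary of the qKZ defining relations \eqref{firstproperty}--\eqref{thirdproperty}, specialized at $q = x_1 = \dots = x_n = 1$. First I would observe that when $q=1$ the shift operator $\omega$ from \eqref{shift} becomes the plain cyclic shift $(\omega f)(x_1,\dots,x_n) = f(x_n,x_1,\dots,x_{n-1})$, so after further setting all $x_i=1$, property \eqref{thirdproperty} reads simply $f_{\mu}(1,\dots,1;1,t) = f_{\mu_n,\mu_1,\dots,\mu_{n-1}}(1,\dots,1;1,t)$; that is, the values $f_{\mu}(1,\dots,1;1,t)$ are invariant under cyclic rotation of the index $\mu$. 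Next I would compute the Hecke action \eqref{Heckeaction} at $x_1=\dots=x_n=1$. Here one must be slightly careful: the operator $T_i$ has the apparent singularity $\tfrac{tx_i-x_{i+1}}{x_i-x_{i+1}}$ at $x_i=x_{i+1}$, but $T_i$ acts on the \emph{family} of polynomials and it is the resulting relations \eqref{firstproperty}--\eqref{secondproperty} among the $f_\mu$'s that we specialize, not the operator itself. So I would specialize those identities of polynomials: for $\mu_i>\mu_{i+1}$, writing out $T_i f_\mu = f_{s_i\mu}$ using \eqref{Heckeaction} and then evaluating at all $x_j=1$ gives a linear relation between $f_\mu(1,\dots,1)$, $f_{s_i\mu}(1,\dots,1)$, and the value at $\mathbf{1}$ of the non-symmetric part $\tfrac{tx_i-x_{i+1}}{x_i-x_{i+1}}(1-s_i)f_\mu$.

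The key computational point is to evaluate $\left.\tfrac{tx_i-x_{i+1}}{x_i-x_{i+1}}(1-s_i)f_\mu\right|_{\mathbf{x}=\mathbf{1}}$. Since $f_\mu$ is a polynomial, $(1-s_i)f_\mu$ vanishes on the diagonal $x_i=x_{i+1}$, hence is divisible by $x_i-x_{i+1}$, so the quotient $\tfrac{(1-s_i)f_\mu}{x_i-x_{i+1}}$ is a polynomial and its value at $\mathbf{1}$ is a finite number; multiplying by $\left.(tx_i-x_{i+1})\right|_{\mathbf{1}} = t-1$ I get a clean expression. I would then combine the two cases: for $\mu_i>\mu_{i+1}$ the relation at $\mathbf{1}$ becomes $f_{s_i\mu}(\mathbf{1}) = t\,f_\mu(\mathbf{1}) - (t-1)\left.\tfrac{(1-s_i)f_\mu}{x_i-x_{i+1}}\right|_{\mathbf{1}}$, and for $\mu_i=\mu_{i+1}$ relation \eqref{secondproperty} at $\mathbf{1}$ reads $t f_\mu(\mathbf{1}) = t f_\mu(\mathbf{1}) - (t-1)\left.\tfrac{(1-s_i)f_\mu}{x_i-x_{i+1}}\right|_{\mathbf{1}}$, forcing $\left.\tfrac{(1-s_i)f_\mu}{x_i-x_{i+1}}\right|_{\mathbf{1}}=0$ in that case. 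For $\mu_i<\mu_{i+1}$ I would use the inverse relation: from $(T_i-t)(T_i+1)=0$ and \eqref{firstproperty} applied to $s_i\mu$ one deduces $T_i f_\mu = f_{s_i\mu} + (t-1)f_\mu$, which at $\mathbf{1}$ gives the companion identity. Comparing, I would show that for every adjacent transposition the quantity $f_\mu(\mathbf{1}) - f_{s_i\mu}(\mathbf{1})$ equals (up to the factor $t-1$) exactly the ``hopping flux'' appearing in the master equation for the stationary distribution of $\ASEP(\lambda)$ as given in \cref{def:ASEP}.

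Putting it together: I would verify that the vector $\bigl(f_\mu(1,\dots,1;1,t)\bigr)_{\mu\in\States(\lambda)}$ lies in the kernel of the ASEP generator, i.e. satisfies the stationarity equations $\sum_\nu \bigl(P_{\nu,\mu} f_\nu - P_{\mu,\nu} f_\mu\bigr)=0$ for all $\mu$. Concretely, the only nonzero transitions are the adjacent swaps $Aij B \leftrightarrow Aji B$ with rates $\tfrac1n$ or $\tfrac tn$; the detailed-balance-type cancellation at each bond is precisely what the specialized relations \eqref{firstproperty}--\eqref{secondproperty} encode, while \eqref{thirdproperty} at $q=1$ guarantees the boundary/cyclic term (the $j$ in the first and $n$-th position, i.e. the swap across the bond $\{n,1\}$) contributes consistently. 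Since the ASEP on $\States(\lambda)$ is irreducible, its stationary distribution is unique up to scaling, so any nonzero solution of the stationarity equations is proportional to it; hence $f_\mu(1,\dots,1;1,t)$ is proportional to the steady-state probability of state $\mu$, provided the $f_\mu(\mathbf{1})$ are not all zero, which follows because the $f_\mu$ are homogeneous of positive degree $|\lambda|$ and (by the normalization implicit in \cref{f-def}, e.g. the dominant $f_\lambda$ being a monic monomial $x^\lambda$ up to lower terms, as in the Macdonald normalization of \cite{KasataniTakeyama, CGW-arxiv}) at least one value at $\mathbf{1}$ is nonzero. The main obstacle I anticipate is handling the apparent pole of $T_i$ at $x_i=x_{i+1}$ carefully enough to extract the correct finite limits, and making the cyclic bond $\{n,1\}$ match up: this requires using \eqref{thirdproperty} to relate $f_\mu$ with $\mu_n$ in the last slot to a rotated family member, and checking that at $q=1$ this reproduces exactly the $A i j B \leftrightarrow A j i B$ rule when the swapped pair straddles positions $n$ and $1$. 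The rest is the standard ``qKZ solution $\Rightarrow$ ASEP stationary state'' argument of Cantini--de Gier--Wheeler, which I would cite but also spell out at the level of the cancellation identity above.
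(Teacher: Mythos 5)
Your proposal is essentially a reconstruction of the original Cantini--de Gier--Wheeler argument for \cite[Corollary 1]{CGW-arxiv}: specialize the exchange relations \eqref{firstproperty}--\eqref{thirdproperty} at $q=x_1=\dots=x_n=1$, show that the resulting linear relations among the values $f_\mu(\mathbf{1})$ are exactly the balance conditions of the master equation, and invoke irreducibility of $\ASEP(\lambda)$ to get proportionality. The paper's own proof, sketched in \cref{MAHA}, goes a genuinely different way: it takes the explicit Matrix Ansatz representation of Prolhac--Evans--Mallick for the stationary distribution, decorates it with the parameters $x_i$ and $q$, proves by induction on $L$ that the resulting traces $Y^{(L)}_\mu(\mathbf{x};q)$ themselves form a qKZ family, and then concludes by uniqueness of the qKZ family (up to scalar, via \cref{Elemma} and \eqref{firstproperty}) that any $f_\mu$ specializes to the PEM stationary weights. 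Your route needs uniqueness of the stationary measure but not of the qKZ family, and is self-contained at the level of the Hecke relations; the paper's route outsources the Markov-chain content entirely to \cite{ProlhacEvansMallick09} and instead buys an explicit representation-theoretic model of the $f_\mu$'s. Both are legitimate.

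Two points in your sketch need repair or completion. First, for $\mu_i<\mu_{i+1}$ the quadratic relation gives $T_i^2=(t-1)T_i+t$, hence $T_if_\mu=t\,f_{s_i\mu}+(t-1)f_\mu$, not $f_{s_i\mu}+(t-1)f_\mu$; with the correct factor of $t$ the bond flux at $\mathbf{1}$ becomes $\tfrac{1-t}{n}\,g^{(k)}_\mu(\mathbf{1})$ in \emph{both} orientations, where $g^{(k)}_\mu=\tfrac{(1-s_k)f_\mu}{x_k-x_{k+1}}$, matching the rates $1$ and $t$ of \cref{def:ASEP}. Second, you assert but do not prove that the fluxes cancel when summed over bonds: the needed identity is $\sum_k g^{(k)}_\mu(\mathbf{1})=0$, which follows because $g^{(k)}_\mu(\mathbf{1})=(\partial_k-\partial_{k+1})f_\mu(\mathbf{1})$ telescopes over $k=1,\dots,n-1$ to $(\partial_1-\partial_n)f_\mu(\mathbf{1})$, and the cyclic bond $\{n,1\}$, handled through \eqref{thirdproperty} at $q=1$ and the chain rule, contributes exactly $(\partial_n-\partial_1)f_\mu(\mathbf{1})$. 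This telescoping is the actual crux of the proof and should be made explicit; once it is, your argument is complete.
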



As we will explain in 
\cref{Elemma} and 
	\cref{lem:Macdonald}, 
the polynomials $f_{\mu}$ are also related to  Macdonald polynomials.
We first quickly review the relevant definitions.

\begin{definition}\label{def:Macdonald}
Let $\langle \cdot, \cdot \rangle$ denote the 
Macdonald inner product on power sum symmetric functions 
\cite[Chapter VI, (1.5)]{Macdonald}, where $<$ denotes
the dominance order on partitions.  
Let $\lambda$ be a partition.
	The (symmetric) \emph{Macdonald polynomial}
$P_{\lambda}(x_1,\dots,x_n; q, t)$ is the unique homogeneous
symmetric polynomial in $x_1,\dots, x_n$ which satisfies
\begin{align*}
\langle P_{\lambda}, P_{\mu}\rangle &=0, \ \lambda \neq \mu,\\
P_{\lambda}(x_1,\dots,x_n; q, t) &= m_{\lambda}(x_1,\dots,x_n)+
\sum_{\mu< \lambda} c_{\lambda, \mu}(q,t) m_{\mu}(x_1,\dots,x_n),
\end{align*}
	i.e. the coefficients $c_{\lambda, \mu}(q,t)$ 
	 are completely determined by the orthogonality conditions.
\end{definition}

The following definition can be found in 
\cite{MacdonaldBourbaki} (see also \cite{Marshall} for a nice 
exposition).
\begin{definition}\label{def:nonsymmetric}
For $1 \leq i \leq n$, we define the \emph{$q$-Dunkl} or \emph{Cherednik 
operators} \cite{Cherednik1, Cherednik2} by 
$$Y_i = T_i^{-1} \dots T_{n-1}^{-1} \omega T_1 \dots T_{i-1}.$$

The Cherednik operators commute pairwise, and hence possess a set of 
simultaneous eigenfunctions,  which are (up to scalar)
	the \emph{nonsymmetric Macdonald polynomials}.  Each simultaneous 
	eigenspace is one-dimensional.  We index
	the nonsymmetric Macdonald polynomials $E_{\mu}(\mathbf{x}; q, t)$ by 
compositions $\mu$ so that 
	$$E_{\mu}(\mathbf{x}; q, t) = \mathbf{x}^{\mu} + \sum_{\nu < \mu} 
	b_{\mu \nu}(q,t) \mathbf{x}^{\nu},$$
	where the partial order on compositions is as in \cite[(2.15)]{Marshall}.

	There is an explicit formula for each eigenvalue of $Y_i$ acting
	on the nonsymmetric Macdonald polynomial $E_{\mu}$ \cite[(2.13)]{Marshall};
	in particular, when $\lambda = (\lambda_1 \geq  \dots
	\geq \lambda_n \geq 0)$ is a partition,  
 we have that for $1 \leq i \leq n$, 
	\begin{equation}\label{eigenfunction}
Y_i E_{\lambda} = y_i(\lambda) E_{\lambda}
\end{equation}
where 
\[
y_i(\lambda) =
q^{\lambda_i}t^{\#\{j<i| \lambda_j=\lambda_i\}- \#\{j>i| \lambda_j=\lambda_i\}}.
\]
\end{definition}

\cref{Elemma} below essentially appears in \cite[Section 3.3]{KasataniTakeyama}.
We thank Michael Wheeler for his explanations.

\begin{lemma} \label{Elemma}
Let $\lambda = (\lambda_1,\dots,\lambda_n)$ be a partition
and let $\{f_{\mu} \}_{\mu
	\in S_n(\lambda)}$ be a set of homogeneous degree $|\lambda|$ polynomials 
as in \cref{f-def}.  
Then $f_{\lambda}$ is a scalar multiple of the nonsymmetric Macdonald polynomial
$E_{\lambda}$.
\end{lemma}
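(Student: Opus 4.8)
The plan is to show that $f_\lambda$, when $\lambda$ is a partition, is a simultaneous eigenfunction of all the Cherednik operators $Y_i$, with the eigenvalues $y_i(\lambda)$ from \eqref{eigenfunction}; since the nonsymmetric Macdonald polynomials are characterized (up to scalar) as the simultaneous eigenfunctions of the $Y_i$, and since the eigenvalues $y_i(\lambda)$ are distinct across the $S_n$-orbit of $\lambda$, this pins down $f_\lambda$ as a scalar multiple of $E_\lambda$. The input is exactly the qKZ relations \eqref{firstproperty}--\eqref{thirdproperty}: relation \eqref{thirdproperty} encodes the action of $\omega$ (via the shift operator), and relations \eqref{firstproperty}--\eqref{secondproperty} control the action of the $T_i$'s on the family $\{f_\mu\}$.

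First I would unwind the definition $Y_i = T_i^{-1}\cdots T_{n-1}^{-1}\,\omega\,T_1\cdots T_{i-1}$ applied to $f_\lambda$. Reading right to left: apply $T_1,\dots,T_{i-1}$ to $f_\lambda$. Because $\lambda$ is a partition, its entries are weakly decreasing, so $\lambda_1\ge\lambda_2\ge\cdots$; hence at each step either $\mu_j>\mu_{j+1}$ (and \eqref{firstproperty} applies, turning $f_\mu$ into $f_{s_j\mu}$) or $\mu_j=\mu_{j+1}$ (and \eqref{secondproperty} applies, producing a factor of $t$ and leaving $f_\mu$ unchanged). Tracking this carefully, $T_1\cdots T_{i-1} f_\lambda$ equals $t^{a}$ times $f_{\mu}$ where $\mu$ is the composition obtained by cyclically moving $\lambda_i$ to the front of $(\lambda_1,\dots,\lambda_i)$ and $a = \#\{j<i \mid \lambda_j=\lambda_i\}$ is the number of equal-entry collisions encountered. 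Next apply $\omega$: by the reformulation of \eqref{thirdproperty} in the remark, $\omega f_{\mu_n,\mu_1,\dots,\mu_{n-1}} = q^{\mu_n} f_\mu$, i.e. $\omega$ applied to the appropriate $f$ multiplies by $q^{(\text{last entry})}$ and cyclically rotates the index; here the relevant last entry is $\lambda_i$, contributing $q^{\lambda_i}$. Finally apply $T_{n-1}^{-1},\dots,T_i^{-1}$; using $(T_j-t)(T_j+1)=0$ we have $T_j^{-1} = t^{-1}(T_j + 1 - t) = t^{-1}T_j - (1-t^{-1})$, so each $T_j^{-1}$ acts through $T_j$, and by the same case analysis on the (now rotated) composition, the inverse generators undo the rotation and reassemble $f_\lambda$, accumulating the remaining powers of $t$, namely $t^{-\#\{j>i \mid \lambda_j = \lambda_i\}}$. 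Collecting the scalars gives precisely $Y_i f_\lambda = q^{\lambda_i} t^{\#\{j<i\mid\lambda_j=\lambda_i\} - \#\{j>i\mid\lambda_j=\lambda_i\}} f_\lambda = y_i(\lambda) f_\lambda$.

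I expect the main obstacle to be the bookkeeping in the last step, where we apply the inverse generators $T_j^{-1}$: since $T_j^{-1}$ is only expressible through $T_j$ (not through a ``reverse'' of \eqref{firstproperty}), one must verify that at each stage the relevant composition still has the weakly-ordered local structure needed to invoke \eqref{firstproperty} or \eqref{secondproperty}, and that the $t$-factors from $T_j^{-1} = t^{-1}(T_j+1-t)$ combine correctly with the $t$-factors produced by \eqref{secondproperty} — in the equal-entry case $T_j f_\mu = t f_\mu$ gives $T_j^{-1} f_\mu = t^{-1} f_\mu$, while in the strict case $T_j f_\mu = f_{s_j\mu}$ and one checks $T_j^{-1}f_{s_j\mu} = f_\mu$ directly from the quadratic relation applied to $f_\mu$ itself. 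Once this is in place, the conclusion follows from the standard fact (e.g. as in \cite{MacdonaldBourbaki, Marshall}) that the $E_\mu$ form a basis of joint $Y$-eigenfunctions with pairwise distinct eigenvalue tuples, so the $y_i(\lambda)$-eigenspace is one-dimensional and spanned by $E_\lambda$; hence $f_\lambda \in \mathbb{C}(q,t)\cdot E_\lambda$.
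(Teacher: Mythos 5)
Your proposal is correct and follows essentially the same route as the paper: unwind $Y_i = T_i^{-1}\cdots T_{n-1}^{-1}\omega T_1\cdots T_{i-1}$ on $f_\lambda$ using \eqref{firstproperty}--\eqref{secondproperty} to bubble $\lambda_i$ to the front, apply \eqref{thirdproperty} to pick up $q^{\lambda_i}$ and rotate, then undo the rotation with the inverse generators to collect $t^{-\#\{j>i\mid\lambda_j=\lambda_i\}}$, concluding via the one-dimensionality of the joint $Y$-eigenspaces. Your treatment of the inverse generators (splitting into the equal-entry case $T_j^{-1}f_\mu = t^{-1}f_\mu$ and the strict case $T_j^{-1}f_{s_j\mu}=f_\mu$) makes explicit a step the paper leaves terse, but the argument is the same.
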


\begin{proof}
For 
	$1 \leq i \leq n$, we claim that \eqref{eigenfunction}
	holds with $E_{\lambda}$ replaced by $f_{\lambda}$, i.e. 
$$Y_i f_{\lambda} = y_i(\lambda) f_{\lambda}.$$

This is because acting by $T_{i-1}$, followed by $T_{i-2}$, and so on, up to $T_1$, means we apply \eqref{firstproperty} when $\lambda_j>\lambda_i$ and \eqref{secondproperty} when $\lambda_j=\lambda_i$ for $j<i$, where the latter contributes a factor of $t$. Thus 
\[Y_i f_{\lambda}= t^{\#\{j<i| \lambda_j=\lambda_i\}}T_i^{-1} \dots T_{n-1}^{-1} \omega f_{(\lambda_i, \lambda_1,\ldots,\lambda_{i-1},\lambda_{i+1},\ldots,\lambda_n)}.
\]
Acting by $\omega$ on $f_{(\lambda_i, \lambda_1,\ldots,\lambda_{i-1},\lambda_{i+1},\ldots,\lambda_n)}$ gives $q^{\lambda_i}f_{(\lambda_1,\ldots,\lambda_{i-1},\lambda_{i+1},\ldots,\lambda_n,\lambda_i)}$. Finally, by \eqref{firstproperty}, $T_j^{-1}f_{\mu}=f_{s_j\mu}$ when $\mu_j<\mu_{j+1}$, from which we obtain the desired equality by applying $T^{-1}_{n-1},\ldots,T^{-1}_i$ in that order. 	

	Therefore by \cref{def:nonsymmetric}, 
	$f_{\lambda}$ must be a scalar multiple of 
	$E_{\lambda}$.
\end{proof}


\begin{lemma}\label{lem:Macdonald}\cite[Lemma 1]{CGW-arxiv}
	Let $\lambda$ be a partition.  Then the  Macdonald polynomial
	$P_{\lambda}(x_1,\dots,x_n;q,t)$ is a scalar multiple of 
	$$\sum_{\mu\in S_n(\lambda)} f_{\mu}(x_1,\dots,x_n;q,t).$$

\end{lemma}

\begin{proof}
	The symmetric Macdonald polynomial $P_{\lambda}$
	is the unique polynomial in the subspace
	$V_{\lambda}:= \Q(q,t) \{E_{\mu} \ \vert \ 
	\mu \in S_n(\lambda) \}$ which is invariant under 
	$S_n$ and such that the coefficient of ${\mathbf x}^{\lambda}$
	is $1$ \cite[Section 5.3]{MacdonaldAffine}, see also 
	\cite[Section 6.18]{HaimanICM}.

	It follows from  \cref{Elemma}, the definition of the $f_{\mu}$ and the 
	fact that $V_{\lambda}$ is a module for the Hecke algebra
	\cite[Section 6.18]{HaimanICM} that 
	$\sum_{\mu} f_{\mu}$ lies in $V_{\lambda}$.  

	Finally it is straightforward to show that if $\mu_i > \mu_{i+1}$,
	then $T_i (f_{\mu} + f_{s_i \mu}) = t
	(f_{\mu}+f_{s_i \mu})$, which together with 
	\eqref{secondproperty}, shows that 
	$T_i \sum_{\mu} f_{\mu} = t \sum_{\mu} f_{\mu}$. This is 
	equivalent to the fact that $\sum_{\mu} f_{\mu}$ is symmetric
	in $x_i$ and $x_{i+1}$, and hence $\sum_{\mu} f_{\mu}$
	is invariant under $S_n$.
\end{proof}

The strategy of our proof of \cref{thm:main} is very simple.  
Our main task is to show that the $F_{\mu}$'s 
satisfy the following properties.

\begin{theorem}\label{thm:123}
\begin{align}
\label{a} T_i F_{\mu}(\mathbf{x}; q,t) &= F_{s_i \mu} (\mathbf{x};q,t),
\text{ when }\mu_i > \mu_{i+1},\\
\label{b} T_i F_{\mu}(\mathbf{x}; q,t) &= t F_{\mu}(\mathbf{x}; q,t), \text{ when }\mu_i = \mu_{i+1},\\
q^{\mu_n} F_{\mu}(\mathbf{x}; q,t)
&= 
\label{c} F_{\mu_n,\mu_1,\dots,\mu_{n-1}}(q x_n,x_1,\dots,x_{n-1}; q,t). 
\end{align}
\end{theorem}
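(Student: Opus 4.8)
The plan is to establish the three identities \eqref{a}, \eqref{b}, \eqref{c} directly from the combinatorial definition of $F_\mu = \sum_Q \wt(Q)$, by analyzing how multiline queues of type $\mu$ relate to those of type $s_i\mu$ (for the first two) and to those of a cyclically shifted type (for the third). Since $T_i$ acts only on the variables $x_i, x_{i+1}$, and $F_\mu$ is built from $\wt_x(Q) = \prod_j x_j^{m_j}$ times a $qt$-weight that does not involve the $x_j$'s, the identities \eqref{a} and \eqref{b} should reduce to a local statement: fix everything about a multiline queue except the data in columns $i$ and $i+1$, and check the $T_i$-identity on the resulting small generating function. The key observation is that the labels $\mu_i, \mu_{i+1}$ of the bottom-row balls in columns $i, i+1$, together with the "local picture" of strands entering/leaving those columns, can be grouped so that a swap of columns $i$ and $i+1$ induces an involution (or a bijection-with-weights) on multiline queues. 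I would set up a notion of the \emph{column-$i,i+1$ restriction} of a multiline queue and show that the contribution of each equivalence class of queues to $F_\mu$ transforms correctly under $T_i$.

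For \eqref{a} and \eqref{b}, more concretely: I would first handle the case $n=2$ (a single pair of columns) and general $L$, showing by a careful bookkeeping of the $qt$-weights that $T_i$ sends $F_\mu \mapsto F_{s_i\mu}$ when $\mu_i>\mu_{i+1}$ and $F_\mu\mapsto tF_\mu$ when $\mu_i=\mu_{i+1}$; here one has to track how swapping the two columns changes the "skipped" and "free" counts in every row, and how the wrapping factor $q^{i-r+1}$ is affected. Using the explicit form $T_i = t - \frac{tx_i - x_{i+1}}{x_i - x_{i+1}}(1 - s_i)$ and writing $\wt_x(Q) = x_i^{a} x_{i+1}^{b} \cdot (\text{rest})$, everything comes down to computing $T_i$ applied to a Laurent-type expression $x_i^a x_{i+1}^b \cdot (\text{coefficient in } q,t)$ summed over the finitely many local configurations; one expects the coefficients to be exactly the $\wt_{qt}$-weights so that the rational-function identity
\[
T_i\Bigl(\sum_{\text{local }Q} c_Q\, x_i^{a_Q} x_{i+1}^{b_Q}\Bigr) = \sum_{\text{local }Q'} c_{Q'}\, x_i^{a_{Q'}} x_{i+1}^{b_{Q'}}
\]
holds with the primed sum ranging over local configurations with bottom labels swapped. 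Then the general-$n$ statement follows because the data outside columns $i,i+1$ is unaffected by $s_i$ and factors out of $T_i$. The case $\mu_i=\mu_{i+1}$ should be simpler, since then the two bottom balls (if present) are forced to be matched upward in a constrained way, and a symmetrization argument gives the eigenvalue $t$ directly.

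For \eqref{c}, the shift identity, I would exhibit an explicit weight-tracking bijection between $L\times n$ multiline queues of type $\mu=(\mu_1,\dots,\mu_n)$ and those of type $(\mu_n,\mu_1,\dots,\mu_{n-1})$: cyclically rotate all $n$ columns one step to the right. Under this rotation the $\mathbf x$-weight $x_1^{m_1}\cdots x_n^{m_n}$ becomes $x_1^{m_n} x_2^{m_1}\cdots x_n^{m_{n-1}}$, which matches the substitution $x_j\mapsto x_{j-1}$ (with $x_1\mapsto qx_n$) up to the factor $q^{m_n}$ coming from column $n$; and $m_n$ should be shown to equal $\mu_n$ plus the number of strands that used to wrap around the seam at column $n$ and now do not (or vice versa), so that the $q$-powers balance against the changes in the wrapping factors $q^{i-r+1}$ in $\wt_{qt}$. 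The main obstacle, and the place requiring the most care, is precisely this seam analysis for \eqref{c} together with the global bookkeeping of skipped/free counts in \eqref{a}: one must verify that rotating columns does not disturb the matching algorithm's output labels and that the net change in all the $q$- and $t$-exponents is exactly $q^{\mu_n}$, no more and no less. I expect \eqref{a} with the wrapping strands present to be the single trickiest computation, since there the interaction of the $T_i$-action with a strand that wraps around the cylinder through the boundary between columns $i$ and $i+1$ (when $\{i,i+1\}$ straddles column $n$, i.e. the case $i=n-1$ vs.\ $i<n-1$) has to be checked separately, and I would isolate that as a lemma.
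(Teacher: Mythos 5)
Your treatment of \eqref{c} is essentially the paper's: the cyclic rotation of all columns by one step, with the $q$-power coming from the substitution $x_1\mapsto qx_n$ balanced against the change in the wrapping factors $q^{i-r+1}$ at the seam, is exactly the bijection of \cref{prop:bijection}, and your identification of the seam bookkeeping as the crux is correct.

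For \eqref{a} and \eqref{b}, however, the proposed localization to columns $i$ and $i+1$ has a genuine gap. The weight $\wt(Q)$ does not factor into a piece supported on columns $i,i+1$ times a piece independent of them: the $qt$-weight of each pairing depends on the counts of skipped and free balls across \emph{all} columns, and the matching algorithm is global, so there is no well-defined involution that swaps ``the data in columns $i$ and $i+1$'' while fixing the rest and carries type $\mu$ to type $s_i\mu$. (A ball in row $1$, column $i$ with label $\mu_i$ sits at the bottom of a chain of strands passing through arbitrary columns; changing its label reroutes that entire chain.) Consequently the reduction to $n=2$ does not go through, and the claim that ``the data outside columns $i,i+1$ factors out of $T_i$'' is unjustified. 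The decomposition that does work --- and is what the paper uses --- is by \emph{rows} rather than columns: write $F_{\mu}=\sum_{\lambda}F_{\mu}^{\lambda}F_{\lambda^-}$ where $F_{\mu}^{\lambda}$ is the generating function of generalized two-line queues (\cref{lem:recursive}), prove exchange identities relating $F_{\mu}^{\lambda}$, $F_{s_i\mu}^{\lambda}$, $F_{\mu}^{s_i\lambda}$, $F_{s_i\mu}^{s_i\lambda}$ (\cref{lem:equal}, \cref{lem:symm1}, \cref{morecases}), and induct on the number of rows via the equivalent identities $(1-s_i)(F_{\mu}+F_{s_i\mu})=0$ and $(1-s_i)(tx_{i+1}F_{\mu}+x_iF_{s_i\mu})=0$. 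Note that even for two-line queues the key identity in the critical case $\mu_{i+1}=\lambda_i$ is not a configuration-by-configuration match but a cancellation at the level of sums, $\sum_{Q}\wt(Q)(1-q^{x-1}t^{f})=\sum_{Q}\wt(Q)(1-t)$ with $f$ a global free-ball count, and one must first invoke the circular symmetry \eqref{c} to move $i,i+1$ to the rightmost positions; your plan supplies a substitute for neither step.
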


Once we have done this, we verify the following lemma.
\begin{lemma}\label{monic}
For any partition $\lambda$, 
$$F_{\lambda}(\mathbf{x};q,t) = E_{\lambda}(\mathbf{x}; q, t),$$ where $E_{\lambda}$ is the nonsymmetric Macdonald polynomial.
\end{lemma}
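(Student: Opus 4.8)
The plan is to deduce \cref{monic} from \cref{thm:123} and \cref{Elemma}. The key observation is that \cref{thm:123} says precisely that the family $\{F_{\mu}\}_{\mu \in S_n(\lambda)}$ is a qKZ family in the sense of \cref{f-def}: property \eqref{a} is \eqref{firstproperty}, property \eqref{b} is \eqref{secondproperty}, and property \eqref{c} is \eqref{thirdproperty}. The one thing that must be checked separately is homogeneity of degree $|\lambda|$, which is immediate from the definition of $\wt_x(Q)=x_1^{m_1}\cdots x_n^{m_n}$: since every ball in the multiline queue lies in exactly one column, $\sum_i m_i$ equals the total number of balls, which by \cref{def:MLQ} is determined by the row sums, hence by the type; for a multiline queue of type $\mu$ this total is $|\mu|=|\lambda|$. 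Also, the coefficients of $F_{\mu}$ are rational functions of $q$ and $t$ since each $\wt_{qt}(p)$ is. Therefore \cref{Elemma} applies and gives that $F_{\lambda}$ is a scalar multiple of $E_{\lambda}$.

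It then remains to pin down the scalar. First I would note that $E_{\lambda} = \mathbf{x}^{\lambda} + \sum_{\nu<\lambda} b_{\lambda\nu}(q,t)\mathbf{x}^{\nu}$ is \emph{monic}, meaning the coefficient of $\mathbf{x}^{\lambda}$ is $1$. So it suffices to show that the coefficient of $\mathbf{x}^{\lambda}$ in $F_{\lambda}$ is also $1$, i.e. that there is a unique multiline queue $Q$ of type $\lambda$ whose $\mathbf{x}$-weight is $x_1^{\lambda_1}\cdots x_n^{\lambda_n}$, and that this $Q$ has $\wt_{qt}(Q)=1$. A multiline queue of type $\lambda$ with $m_i=\lambda_i$ must have, in each column $i$, the balls stacked in rows $1,2,\dots,\lambda_i$ (bottom-justified) — this is forced because the number of balls in each row is weakly increasing from bottom to top (wait, weakly increasing from top to bottom, i.e. weakly decreasing from bottom to top), and $\lambda$ is already a partition. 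With this "staircase" ball system, every ball in row $r>1$, column $i$, sits directly above a ball in row $r-1$, column $i$ (since $\lambda_i \geq r$ forces $\lambda_i \geq r-1$), so every pairing is a trivial vertical pairing. Hence there are no nontrivial pairings, $\wt_{qt}(Q)=1$, and the type of $Q$ is indeed $\lambda$ because no ball gets matched across columns — the induced labels in row $1$ are exactly $(\lambda_1,\dots,\lambda_n)$. Moreover, any other multiline queue of type $\lambda$ either has a different column-weight or, if it has the same ball system, still has only trivial pairings and the same type, so uniqueness holds; I would verify there is no \emph{other} ball system of type $\lambda$ with column-weight $\mathbf{x}^{\lambda}$ (there is not, since the ball system with $m_i=\lambda_i$ and row sums a partition is uniquely the bottom-justified staircase).

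I expect the main obstacle to be the bookkeeping in the last paragraph: carefully arguing that the bottom-justified staircase ball system is the only one with the prescribed column multiplicities \emph{and} that it produces exactly one multiline queue of type $\lambda$ with trivial weight, while no multiline queue of a different ball system contributes a monomial $\geq \mathbf{x}^{\lambda}$ in dominance order with nonzero coefficient. This is essentially a minimality/leading-term argument and is routine, but it requires knowing how the matching algorithm in \cref{def:MLQ} behaves on near-staircase configurations; alternatively, one can sidestep part of it by invoking that $F_{\lambda} = c\, E_{\lambda}$ for a scalar $c$ from \cref{Elemma} and only needing the single coefficient of $\mathbf{x}^{\lambda}$ to be $1$, which reduces the task to exhibiting the one staircase multiline queue and checking no other multiline queue of type $\lambda$ has $\mathbf{x}$-weight equal to $\mathbf{x}^{\lambda}$. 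Putting these together yields $c=1$ and hence $F_{\lambda} = E_{\lambda}$.
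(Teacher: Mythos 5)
Your proposal follows the paper's proof exactly: invoke \cref{Elemma} (via \cref{thm:123}) to get that $F_{\lambda}$ is a scalar multiple of $E_{\lambda}$, then compare the coefficients of $\mathbf{x}^{\lambda}$ on both sides; the paper's entire proof is those two sentences, asserting that the coefficient of $\mathbf{x}^{\lambda}$ in $F_{\lambda}$ is $1$ ``from the definition.''

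Your extra work justifying that coefficient is welcome, but one step in it is false as stated: the parenthetical claim that ``the ball system with $m_i=\lambda_i$ and row sums a partition is uniquely the bottom-justified staircase.'' For $\lambda=(3,2,2,1)$ the ball system with row $3$ occupied in columns $\{1,2\}$, row $2$ in columns $\{1,3\}$, and row $1$ in columns $\{1,2,3,4\}$ has column sums $(3,2,2,1)$ and row counts $4,2,2$ weakly increasing from top to bottom, yet is not the staircase. The repair is to use the type, not just the column sums: in any multiline queue of type $\lambda$, the ball in row $1$, column $i$ heads a chain with exactly one ball in each of rows $1,\dots,\lambda_i$, so the number of balls in row $r$ is forced to be $\#\{i:\lambda_i\ge r\}=\lambda'_r$; once both the row sums and the column sums are the conjugate pair $(\lambda',\lambda)$, the $0$--$1$ array is unique (the extreme case of Gale--Ryser) and is the staircase, on which all pairings are trivial and $\wt_{qt}=1$. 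With that correction your argument is complete and agrees with the paper's.
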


\begin{proof}
	By \cref{Elemma}, we know that $F_{\lambda}$ is a scalar multiple of $E_{\lambda}$.
It follows from the definition that the
 coefficient of ${\mathbf x}^{\lambda}$ in 
$F_{\lambda}$ is $1$, and it  follows from \cref{def:nonsymmetric} that  
	 the coefficient of ${\mathbf x}^{\lambda}$ in  $E_{\lambda}$ is $1$,
	 so we are done.
\end{proof}

Then  \cref{thm:123},   \cref{monic}, 
and \cref{lem:Macdonald} implies \cref{thm:main}, 
that our sum over multiline queues equals the symmetric Macdonald 
polynomial $P_{\lambda}$.  

Note also that once we have verified  \cref{thm:123},   \cref{ASEP-f} implies 
\cref{main:prob}, the formula for probabilities of $\ASEP(\lambda)$ in terms of 
multiline queues.

\begin{remark}
It is straightforward to check, using the definition of the action of the 
$T_i$'s in \eqref{Heckeaction}, that 
\eqref{a} is equivalent to the statement that if $\mu_i>\mu_{i+1}$,
\begin{equation}\label{aa}
\frac{(1-t)x_{i+1}}{x_i-x_{i+1}} F_{\mu}(\mathbf{x}; q, t) + 
\frac{(tx_i-x_{i+1})}{x_i-x_{i+1}} s_i F_{\mu}(\mathbf{x}; q, t) - 
 F_{s_i \mu}(\mathbf{x}; q, t) = 0.
\end{equation}

Similarly, 
\eqref{b} is equivalent to the statement that if $\mu_i=\mu_{i+1}$,
\begin{equation}\label{bb}
F_{\mu}(\mathbf{x}; q, t) = s_i  F_{\mu}(\mathbf{x}; q, t).
\end{equation}
In other words, when $\mu_i=\mu_{i+1}$, 
$F_{\mu}(\mathbf{x}; q, t)$ is symmetric in $x_i$ and $x_{i+1}$.
\end{remark}

The structure of this paper is as follows.  
In \cref{circular}, we  prove that the $F_{\mu}$'s satisfy 
\eqref{c}, the circular symmetry, 
and in \cref{inductive}, we  use induction to prove that all multiline queue
generating functions
satisfy \eqref{aa} and \eqref{bb}.
This completes the proof of our main results.
In \cref{sec:comparison} we show that our polynomials $F_{\mu}$ agree
with certain \emph{permuted basement Macdonald polynomials}, and we 
compare the number of terms in our formula versus the Haglund-Haiman-Loehr
formula for $E_{\lambda}$.
In \cref{sec:tableau} we give a bijection between multiline queues and
some tableaux we call \emph{queue tableaux}; the latter coincide with 
permuted
basement tableaux precisely when $\mu$ is a composition with all parts distinct.


\vskip .2cm

\noindent{\bf Acknowledgements:~} We would like to thank 
James Martin, for sharing an early draft of his paper \cite{Martin} 
with us.  We would also like to thank Mark Haiman for several
interesting conversations about Macdonald polynomials, and Jim Haglund
for telling us about permuted basement Macdonald polynomials.
We are grateful  to Jan de Gier and Michael Wheeler
for useful explanations of their results \cite{CGW, CGW-arxiv}, and 
Sarah Mason for helpful comments on our paper.  Finally, we would like to 
thank the referee for a remarkable referee report, whose 97 
 comments/suggestions were extremely helpful.

\section{Circular symmetry: the proof of \eqref{c}} \label{circular}
In this section we prove \eqref{c}, which we restate in \cref{prop:circular}
for convenience.

We start by proving a useful lemma regarding the weight of pairings in multiline queues. Recall from \cref{def:wt} that $\wt_{q,t}(p)$ is defined using the particular pairing order in which within each row, balls with the same label are paired from right to left. The following lemma says that any other pairing order would give rise to the same weight-generating function for multiline queues.

\begin{lemma}\label{lem:order}
Let $Q$ be a multiline queue, and let $r,r-1$ be two consecutive rows. Let $k\geq r$. Then the total weight of all possible pairings of the balls with label $k$ from row $r$ to the balls with label $k$ in row $r-1$ is independent of the order in which we pair those balls. 

	That is, suppose that 
 there are $\ell$ balls with label $k$ in row $r$ which are nontrivially
paired to partners in row $r-1$.  We denote the $\ell$ balls in row
$r$ by $b_1,\dots,b_{\ell}$ from right to left.  
Now given a permutation $\pi\in S_{\ell}$, 
let us modify \cref{def:wt} by reading the balls $\{b_1,\dots,b_{\ell}\}$ in the order $b_{\pi(1)},\dots, b_{\pi(\ell)}$, and let 
$\wt^{\pi}_{q,t}(p_{\pi(1)}),\dots, \wt^{\pi}_{q,t}(p_{\pi(\ell)})$
denote the resulting weights of the pairings, where 
$p_{\pi(i)}$ denotes the pairing of $b_{\pi(i)}$ to its partner.
Then for any two permutations
	 $\pi,\pi'\in S_{\ell}$, we have the following:
\[
	\sum_{p_1,\ldots,p_\ell} \prod_{i=1}^\ell \wt_{q,t}^{\pi}(p_{\pi(i)}) = \sum_{p_1,\ldots,p_\ell} \prod_{i=1}^\ell \wt_{q,t}^{\pi'}(p_{\pi'(i)}),
\]
where the sums are over all possible pairings $p_1,\ldots,p_\ell$ between 
the balls with label $k$ in row $r$ and row $r-1$. 
\end{lemma}

\begin{proof}
It is enough to show this holds when $\pi'=s_j\pi$ for some transposition $s_j$. Consider the balls $b_{\pi(j)}$ and $b_{\pi(j+1)}$ in row $r$, and suppose a given pairing matches them to balls in row $r-1$ which we denote by $b'_{\pi(j)}$ and $b'_{\pi(j+1)}$ respectively. We construct an involution $\iota$ on pairings as follows. If the pairings $b_{\pi(j)}  \rightarrow b'_{\pi(j)}$ and $b_{\pi(j+1)} \rightarrow b'_{\pi(j+1)}$ both cross each other (meaning that pairing $b_{\pi(j)}  \rightarrow b'_{\pi(j)}$  skips over ball $b'_{\pi(j+1)}$, and pairing $b_{\pi(j+1)} \rightarrow b'_{\pi(j+1)}$ skips over ball $b'_{\pi(j)}$) $\iota$ does nothing. If neither of these pairings cross each other, $\iota$ does nothing. Otherwise, $\iota$ swaps the two endpoints of the pairings, so that $\iota(p_{\pi(j)})$ is a pairing from $b_{\pi(j)}$ to $b'_{\pi(j+1)}$ and $\iota(p_{\pi(j+1)})$ is a pairing from $b_{\pi(j+1)}$ to $b'_{\pi(j)}$.
	
We claim that 
\begin{equation}\label{eq:iota}
\wt_{q,t}^{\pi}(p_{\pi(j)})\wt_{q,t}^{\pi}(p_{\pi(j+1)}) = \wt_{q,t}^{\pi'}(\iota(p_{\pi'(j)}))\wt_{q,t}^{\pi'}(\iota(p_{\pi'(j+1)})).
\end{equation} 
This is not hard to check. When $\iota$ acts trivially, $\wt_{q,t}^{\pi}(p_{\pi(i)})=\wt_{q,t}^{\pi'}(\iota(p_{\pi'(i)}))$ for $i=j,j+1$. We will illustrate it in the case that $\iota$ swaps the endpoints of the pairings. Without loss of generality, suppose the pairing $p_{\pi(j)}$ from $b_{\pi(j)}$ to $b'_{\pi(j)}$ skips over the ball $b'_{\pi(j+1)}$. Suppose $m_1$ of the balls from the set $S=\{b'_{\pi(j+2)},\ldots,b'_{\pi(\ell)}\}$ lie between $b_{\pi(j)}$ and $b'_{\pi(j+1)}$,  $m_2$ of the balls from $S$ lie between $b_{\pi(j+1)}$ and $b'_{\pi(j+1)}$, and $m_3$ of the balls from $S$ lie between $b'_{\pi(j+1)}$ and $b'_{\pi(j)}$, as in the figure below. 
\begin{figure}[!ht]
  \centerline{\includegraphics[width=\linewidth]{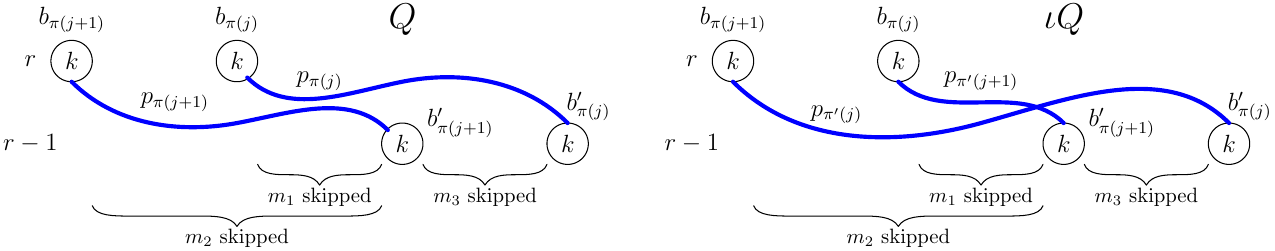}}
\end{figure}
Note that $b'_{\pi(j+1)}$ lies between $b_{\pi(j+1)}$ and $b'_{\pi(j)}$ (cyclically), since otherwise both pairings would cross each other. Then $\wt_{q,t}^{\pi'}(\iota(p_{\pi'(j)}))=\wt_{q,t}^{\pi'}(\iota(p_{\pi(j+1)})) = \wt_{q,t}^{\pi}(p_{\pi(j)}))t^{m_2-m_1}$ and $\wt_{q,t}^{\pi'}(\iota(p_{\pi'(j+1)}))=\wt_{q,t}^{\pi'}(\iota(p_{\pi(j)})) = \wt_{q,t}^{\pi}(p_{\pi(j+1)}))t^{m_1-m_2}$, which gives us \eqref{eq:iota}. See \cref{ex:iota}.
	
We therefore conclude that when we sum over all possible pairings, the total weight is the same with pairing orders $\pi$ and $\pi'=s_j\pi$, and hence this also holds for arbitrary pairing orders. 
\end{proof}

\begin{example}\label{ex:iota}
Let $Q$ be a multiline queue whose rows $r,r-1$ are shown below, with nontrivially paired balls $b_1,b_2,b_3$ with label $k$ in row $r$. Suppose $\pi=(2,1,3)$ and $\pi'=s_1\pi=(1,2,3)$. We show $\iota Q$ for this $\pi'$. (The example only includes balls having label $k$ since balls of other labels don't interact with the label $k$ pairing order.) 
\begin{figure}[!ht]
  \centerline{\includegraphics[width=\linewidth]{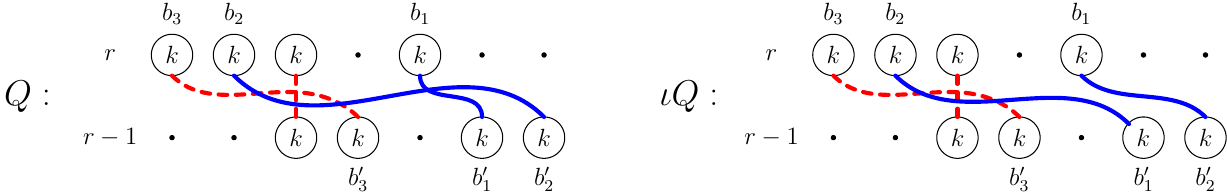}}
\end{figure}

We compute all the relevant quantities, noting that the denominators of both sides of \eqref{eq:iota} are equal.
\begin{itemize}
\item $p_{\pi(1)}$ is the pairing from $b_2$ to $b'_2$ and the numerator of $\wt_{q,t}^{\pi}(p_{\pi(1)})$ is $t^2(1-t)$.
\item $p_{\pi(2)}$ is the pairing from $b_1$ to $b'_1$ and the numerator of  $\wt_{q,t}^{\pi}(p_{\pi(2)})$ is $1-t$.
\item $\iota(p_{\pi'(1)})$ is the pairing from $b_1$ to $b'_2$ and the numerator of $\wt_{q,t}^{\pi'}(\iota(p_{\pi'(1)}))$ is $t(1-t)$.
\item $\iota(p_{\pi'(2)})$ is the pairing from $b_2$ to $b'_1$ and the numerator of  $\wt_{q,t}^{\pi'}(\iota(p_{\pi'(2)}))$ is $t(1-t)$.
\end{itemize}
We see that \eqref{eq:iota} is satisfied for $\pi,\pi',j=1$.
\end{example}

	\begin{proposition}\label{prop:circular}
\begin{equation}\label{eq:q}
F_{\mu_n,\mu_1,\ldots ,\mu_{n-1}}(qx_n, x_1,\ldots ,x_{n-1};q, t)=
q^{\mu_n}F_{\mu_1,\ldots ,\mu_{n}}(x_1,\ldots ,x_{n};q, t).
\end{equation}
\end{proposition}
Let $L = \max\{\mu_1,\dots,\mu_n\}$. Both sides of \eqref{eq:q} have an interpretation in terms of multiline queues with $L$ rows. In our proof, we will take advantage of \cref{lem:order} and use a different pairing order for the multiline queues on the RHS versus the LHS.

We define a \emph{sequence of ball labels} for any given column of a multiline queue to be the word obtained by reading the labels off the balls in that column from bottom to top, and recording a 0 for each empty spot. Let this word have the form $i_1^{k_1}\ldots i_{\ell}^{k_\ell}$ with $0\le i_j\le L$ and $k_j>0$ for any $j$. In \cref{hecke_c}, we show such a sequence of ball labels for the rightmost column of a multiline queue.

Let $\delta$ be the Kronecker delta, i.e. $\delta_{S}$ equals $1$ or $0$ based on whether $S$ is a true statement. We will prove \eqref{eq:q} by proving
the following combinatorial statement.

\begin{proposition}\label{prop:bijection}
Let $Q \in \MLQ(\mu)$, and let $\omega$ be the bijection from multiline queues to multiline queues which maps $Q$ to the cyclic shift $Q'\in\MLQ(\omega\mu)$ of $Q$, obtained by taking the $n$th column of $Q$ and wrapping it around to become the first column of $Q'$, see \cref{hecke_c} (all connectivities of balls are preserved). Let $\widetilde{\pi}$ be the pairing order for $Q'$ that pairs balls in the same order that they are paired in $Q$, regardless of their location in $Q'$.

	Then we have 
	\begin{eqnarray}
\wt_{x_1,\ldots ,x_n}(Q)&=&\wt^{\widetilde{\pi}}_{x_n,x_1,\ldots ,x_{n-1}}(Q')\label{eq:xweight}\\
q^{\mu_n}\wt_{q,t}(Q)&=&\wt^{\widetilde{\pi}}_{q,t}(Q')\prod_{j=1}^\ell
		q^{\delta_{(i_j>0)}k_j}. \label{eq:lastcolumn}
\end{eqnarray}
\end {proposition}

\begin{proof}
We first note \eqref{eq:xweight} is immediate, and moreover that the cyclic shift of the multiline queue doesn't affect any of the pairings between the balls, so by \cref{lem:order} the $t$-weight is unchanged: $\wt_{q,t}(Q)\vert_{q=1}=\wt^{\widetilde{\pi}}_{q,t}(Q')\vert_{q=1}$. Furthermore, the denominators of $Q$ and $Q'$ are identical, since they depend solely on the set of trivial pairings, and those are also preserved under the cyclic shift.  Thus it is sufficient to show equality in the $q$-weights of the numerators of both sides of \eqref{eq:lastcolumn}. 

We start by computing the weight in $q$ of the numerator of $Q$. The sequence of ball labels in the $n$th column of $Q$ is $i_1^{k_1}\ldots i_{\ell}^{k_\ell}$ with $0\le i_j\le L$, $i_j\neq i_{j+1}$, and $k_j>0$ for any $j$, as in the left side of \cref{hecke_c}. Note that $\mu_n=i_1$.

\begin{figure}[!ht]
  \centerline{\includegraphics[height=3.5in]{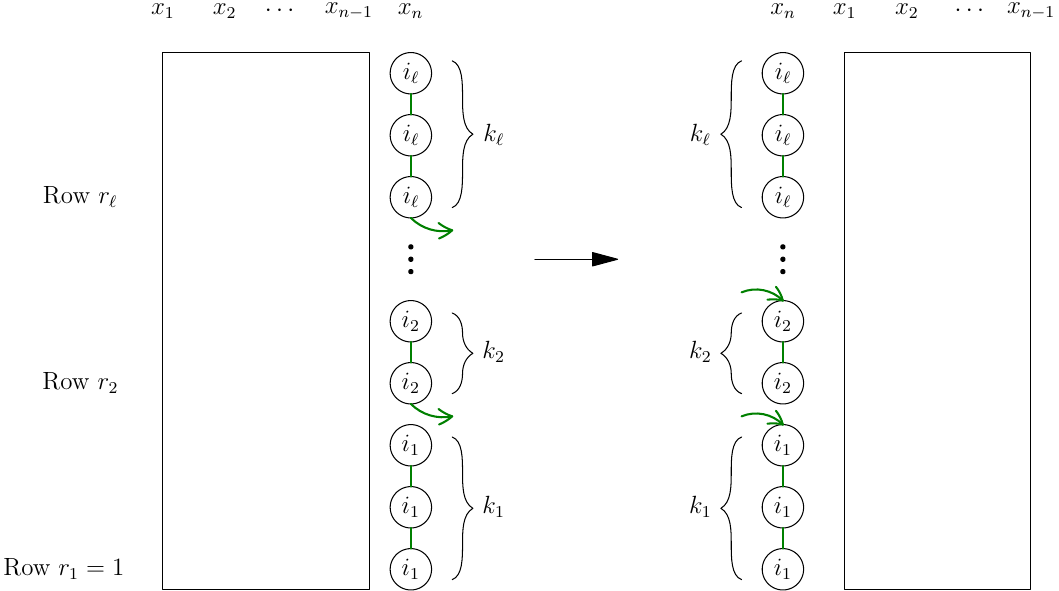}}
\centering
	\caption{The bijection $\omega$ taking 
a multiline queue $Q$ of type $(\mu_1,\ldots,\mu_n)$ (left) to its cyclic shift $Q'$ of type $(\mu_n,\mu_1,\ldots,\mu_{n-1})$ (right). 
	The column that got wrapped around has the sequence of ball labels $i_1^{k_1}\ldots i_{\ell}^{k_\ell}$. 
	In $Q$ on the left, the arrow from the lowest ball labeled $i_j$ represents a wrapping ball pairing if $i_j>0$, contributing $q^{\delta_{(i_j>0)}( i_j-r_j+1)}$ to the total weight. In $Q'$ on the right, whenever $i_j>0$, there must be an arrow going to the highest ball labeled $i_j$ (which is in row $r_{j+1}-1$) from some ball labeled $i_j$ in row $r_{j+1}$. This pairing contributes $q^{\delta_{(i_j>0)}(i_j - r_{j+1}+1)}$ to the total weight.}
	\label{hecke_c}
 \end{figure}

We also note that any ball pairing that wraps in $Q$ from a column other than the $n$'th one, will also wrap in $Q'$, so its contribution to the weight in $q$ of the numerator is identical on both sides of \eqref{eq:lastcolumn}. Thus let us compute the contribution to the $q$-weight in the numerator arising from pairings to or from balls in the $n$'th column of $Q$, and compare this to the $q$-weight in the numerator arising from the pairings to or from balls in the first column of $Q'$, which that column is sent to after the cyclic shift. 

A ball labeled  $i$ in column $n$ and row $r$ contributes a $1$ if there is a ball with the same label directly beneath it, and otherwise contributes $q^{i-r+1}$ to the $q$-weight of $Q$, since its pairing necessarily wraps. For $1 \leq j \leq \ell$, define $r_j = 1+\sum_{u< j} k_u$ to be the row number of the bottom-most ball labeled $i_j$ in its block. For any $j=2, \dots, \ell$ and $i_j>0$, the weight of the ball pairing wrapping from row $r_j$ is therefore 
	\[q^{i_j - r_j+1}.\]
Thus we get that the $n$th column contributes 
\begin{equation}\label{eq:qs}
	\prod_{j=2}^{\ell} q^{\delta_{(i_j>0)}(i_j-r_j+1)}
\end{equation}
to the $q$-weight of $Q$ (note that the sum starts with $j=2$ since the pairing from the ball $i_1$ in row $r_1=1$ does not wrap). Using the fact that $r_1=1$ and $i_1-r_1+1 = i_1 =\mu_n$, multiplying \eqref{eq:qs} by $q^{\mu_n}$ equates to rewriting the product:
\begin{equation}\label{eq:qs2}
	q^{\mu_n}\prod_{j=2}^{\ell} q^{\delta_{(i_j>0)}(i_j-r_j+1)}=\prod_{j=1}^{\ell} q^{\delta_{(i_j>0)}(i_j-r_j+1)}.
\end{equation}


For the first column of $Q'$, the sequence of balls read from bottom to top of the multiline queue is (again) $i_1^{k_1}\ldots i_{\ell}^{k_\ell}$ with $0\le i_j\le L$ and $k_j>0$ for any $j$, as shown on the right side of \cref{hecke_c}. As before, in $Q'$, all wrapping pairings to balls in columns other than the first one were also wrapping in $Q$. 
Thus let us compute the power of $q$ coming from the pairings that wrap to the balls in the first column of $Q'$.

The ball labeled $i$ in column $1$ and row $r-1$ contributes $1$ if the ball directly above it has the same label $i$, and $q^{i-r+1}$ otherwise, due to the incoming wrapping pairing from ball labeled $i$ in row $r$. Note that if $i=r-1$, the ball numbered $i$ in row $r-1$ is necessarily the topmost ball in its string with no ball in row $r$ connecting to it, and so there's no contribution from an incoming pairing; accordingly, $i-r+1=0$ in that case. Thus for any $j=1, \dots, \ell-1$ if $i_j> 0$, the $q$-weight of the wrapping pairing going to the topmost ball labeled $i_j$ (which is in row $r_{j+1}-1$) is 
\[
	q^{i_j - r_{j+1}+1}.
\]
(We exclude the $j=\ell$ case, since the topmost ball labeled $i_{\ell}$ is in the topmost row of the multiline queue and by definition has no pairings going into it.) Therefore, we get that the contribution to the $q$-weight to the right hand side of \eqref{eq:lastcolumn} coming from the first column of $Q'$ is
\[
	\prod_{j=1}^{\ell-1} q^{\delta_{(i_j>0)}(i_j - r_{j+1}+1)}.
\]
Now we use the fact that $r_{j}+k_j=r_{j+1}$ and $i_{\ell}=\delta_{(i_{\ell}>0)}(r_{\ell}+k_{\ell}-1)$ (where $i_{\ell}$ is necessarily either $0$ or $L$) to get:
\[
	q^{\sum_{j=1}^\ell \delta_{(i_j>0)}k_j}\prod_{j=1}^{\ell-1} q^{\delta_{(i_j>0)}(i_j - r_{j+1}+1)} = \prod_{j=1}^{\ell} q^{\delta_{(i_j>0)}(i_j-r_j+1)},
\]
which equals \eqref{eq:qs2}. Since we were comparing the difference in the $q$-weights in the numerators arising from wrapping pairings associated to the $n$th column of $Q$ vs. the first column of $Q'$, this proves the equality \eqref{eq:lastcolumn}.
\end{proof}

The proof of \eqref{eq:q} now follows from \cref{prop:bijection} because 
\[
\sum_{Q} q^{\mu_n}\wt_{q,t}(Q)\wt_{x}(Q)=q^{\mu_n}F_{\mu}(x_1,\ldots ,x_n;q,t)
\]
and for any pairing order $\widetilde{\pi}$,
\[
\sum_{Q'} \wt^{\widetilde{\pi}}_{q,t}(Q')\wt^{\widetilde{\pi}}_{x_n,x_1,\ldots ,x_{n-1}}(Q')q^{\sum_{j=1}^\ell \delta_{(i_j>0)}k_j}=F_{(\mu_n,\mu_1,\ldots ,\mu_{n-1})}(qx_n,x_1,\ldots ,x_{n-1};q,t).
\]

\section{The Hecke operators and multiline queues: the proof of \eqref{aa} and \eqref{bb}}\label{inductive}

Recall 
from \eqref{action1} and \eqref{action2}
that we use the notation
\begin{eqnarray*}
F_{s_i\mu}({\bf x}; q, t)
&=&
F_{\mu_1,\ldots ,\mu_{i-1},\mu_{i+1},\mu_i,\mu_{i+2},\ldots ,\mu_n}(x_1,\ldots ,x_{n};q,t)\\
s_iF_{\mu}({\bf x};q, t)
&=&
F_{\mu}(x_1,\ldots,x_{i-1},x_{i+1},x_i,x_{i+2},\ldots,x_n;q, t)
\end{eqnarray*}

For conciseness we will sometimes omit the dependence on $q$ and $t$, even $\x$, 
writing 
$F_{\mu}$ or $F_{\mu}(\bf{x})$ as an abbreviation for 
$F_{\mu}({\bf x};q, t)
=
F_{\mu_1,\ldots ,\mu_{n}}(x_1,\ldots ,x_{n};q, t).$

We give an inductive proof of the main result which is based on the fact that, we can view a 
multiline queue $Q$ with $L$ rows as a multiline queue $Q'$ with $L-1$ rows (the restriction of $Q$ to rows 
$2$ through $L$) sitting on top of 
a (generalized) multiline queue $Q_0$ with $2$ rows (the restriction of $Q$ to rows $1$ and $2$).  
Since $Q'$ occupies rows 
$2$ through $L$ and has balls labeled $2$ through $L$, we identify $Q'$ with a 
multiline queue obtained by decreasing the row labels and ball labels in the top $L-1$ rows of $Q$ 
by $1$, see \cref{fig:decomp}.  
\begin{figure}[!ht]
  \centerline{\includegraphics[width=\linewidth]{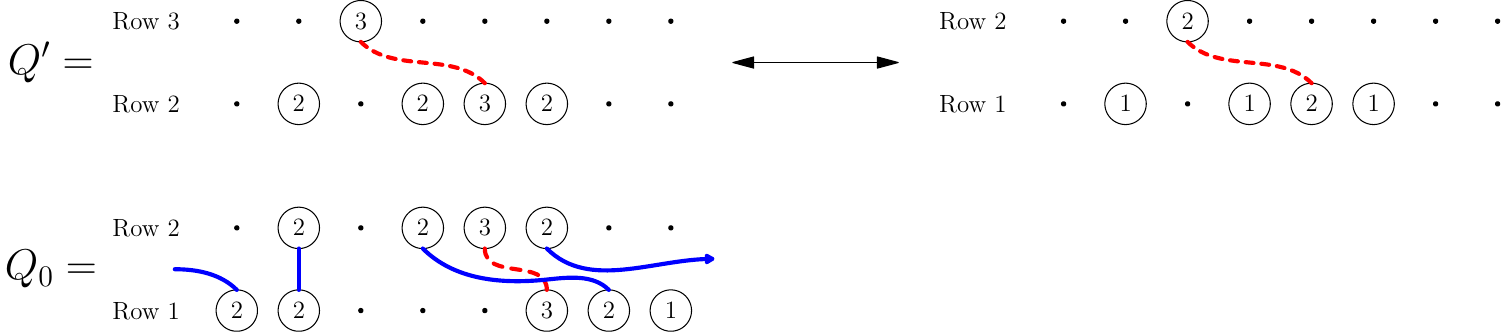}}
\centering
\caption{The multiline queue $Q$ from \cref{fig:MLQ_example} decomposes into the multiline queue $Q'$ and the 
generalized multiline queue $Q_0$ shown here.}
\label{fig:decomp}
 \end{figure}
(Holes, represented by $0$,
remain holes.)  
If the bottom row of $Q'$ is the composition $\lambda$, then after decreasing labels as above,
the new bottom row is $\lambda^-$, 
where 
$\lambda^-_i=\max(\lambda_i-1,0)$. 
Meanwhile $Q_0$ has just two rows, but its balls are labeled $1$ through $L$;
we refer to it as a \emph{generalized two-line queue}.  

\begin{definition}\label{def:two-row}
A generalized two-line queue is a two-row multiline queue whose top and bottom rows are represented by a pair of compositions $\lambda$ and $\mu$, respectively, satisfying the following conditions: $\lambda$ has no parts of size 1, and for each $j>1$, $|\{i:\mu_i=j\}|=|\{i:\lambda_i=j\}|$. Moreover, for each $i$, either $\mu_i=0$, or $\lambda_i \leq \mu_i$. (In other words, a larger label cannot be directly above a smaller nonzero label, as in a usual multiline queue, and if this condition is not satisfied, the multiline queue is not considered valid.) Let $\mathcal{Q}_{\mu}^{\lambda}$ denote the set of (generalized)
two-line queues with bottom row $\mu$ and top row $\lambda$.  
For $Q_0\in \mathcal{Q}_{\mu}^{\lambda}$, we define
\[
\wt(Q_0) = \wt_{q,t}(Q_0) \cdot \prod_{\mu_i > 0} x_i.
\]
and 
\[
F_{\mu}^{\lambda} = F_{\mu}^{\lambda}(\x) = \sum_{Q_0 \in \mathcal{Q}_{\mu}^{\lambda}} \wt(Q_0),
\]
\end{definition}

For example the queue $Q_0$ at the bottom of Figure \ref{fig:decomp} is a generalized two-line queue in $ \mathcal{Q}_{\mu}^{\lambda}$ with $\mu=(2,2,0,0,0,3,2,1)$ and $\lambda=(0,2,0,2,3,2,0,0)$.

Note that we only take the bottom row of $Q_0$ into account when computing the $\mathbf x$-weight.  This 
is because we want $\wt(Q) = \wt(Q') \wt(Q_0)$, where the top $L-1$ rows of $Q$ give $Q'$ and the 
bottom two rows give $Q_0$.

The following lemma is immediate from the definitions.

\begin{lemma}\label{lem:recursive}
\[
F_{\mu}=\sum_\lambda F_{\mu}^{\lambda}F_{\lambda^-}.
\]
\end{lemma}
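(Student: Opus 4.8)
\textbf{Proof proposal for \cref{lem:recursive}.}

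The plan is to exhibit an explicit weight-preserving bijection between $L\times n$ multiline queues $Q$ of type $\mu$ and pairs $(Q_0, \widetilde{Q'})$, where $Q_0$ is a generalized two-line queue with bottom row $\mu$ and some top row $\lambda$, and $\widetilde{Q'}$ is an $(L-1)\times n$ multiline queue of type $\lambda^-$; the sum over all $\lambda$ of $F_\mu^\lambda F_{\lambda^-}$ then collects exactly the terms $\sum_Q \wt(Q) = F_\mu$. The bijection itself is essentially forced by the decomposition already described in the text just before the lemma: given $Q$, let $Q_0$ be the restriction of $Q$ to rows $1$ and $2$ (keeping the induced ball labels $1,\dots,L$ on these two rows and the matchings between them), and let $Q'$ be the restriction of $Q$ to rows $2$ through $L$, then shift all row-indices and ball-labels down by $1$ to obtain $\widetilde{Q'}$, an $(L-1)$-row multiline queue. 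The top row of $Q_0$ equals the bottom row $\lambda$ of $Q'$, which becomes $\lambda^-$ (with $\lambda^-_i=\max(\lambda_i-1,0)$) after the shift, so $\widetilde{Q'}$ has type $\lambda^-$. Conversely, given any such pair one reassembles $Q$ by stacking $\widetilde{Q'}$ (shifted back up) on top of $Q_0$; the labels match up because the label of a ball in row $r\ge 2$ of $Q$ is determined by the matching process restricted to rows $\ge 2$, which is exactly what $Q'$ records. This shows the two sides index the same set.

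The remaining point is that the bijection is weight-preserving, i.e. $\wt(Q) = \wt(Q_0)\,\wt(\widetilde{Q'})$. For the $\mathbf{x}$-weight: $\wt_x(Q) = \prod_i x_i^{m_i}$ where $m_i$ is the number of balls in column $i$ of $Q$; this factors as (number of balls in column $i$ among rows $1,2$) plus (number among rows $2,\dots,L$) minus (the row-$2$ ball, if present, counted twice). By \cref{def:two-row}, $\wt(Q_0)$ contributes $\prod_{\mu_i>0} x_i$ — one factor of $x_i$ for each occupied bottom-row position — and does not count row-$2$ balls at all; meanwhile $\wt_x(\widetilde{Q'})$ contributes $x_i$ to the power (number of balls in rows $2,\dots,L$ of $Q$ lying in column $i$). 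The bottom row of $Q$ is occupied in column $i$ exactly when $\mu_i>0$, so these two contributions together count every ball of $Q$ exactly once, giving $\wt_x(Q_0)\wt_x(\widetilde{Q'}) = \wt_x(Q)$. For the $qt$-weight: $\wt_{qt}(Q)$ is a product of $\wt_{qt}(p)$ over all nontrivial pairings $p$ of $Q$, and every such pairing lies between some consecutive rows $r$ and $r-1$. Pairings between rows $1$ and $2$ are exactly the nontrivial pairings of $Q_0$; pairings between rows $r$ and $r-1$ for $r\ge 3$ correspond under the shift to pairings of $\widetilde{Q'}$ between rows $r-1$ and $r-2$. The only thing to check is that the factor $\wt_{qt}(p)$ is unchanged by the shift: the formula in \cref{def:wt} depends on the pairing only through the quantities $\#\skipped$, $\#\free$, and $i-r+1$ (and whether the strand wraps), where $i$ is the common label and $r$ the upper row; shifting both $i$ and $r$ down by $1$ leaves $i-r+1$ invariant, and $\#\skipped$, $\#\free$, and the wrapping status are manifestly unchanged since they are determined by the geometry of the two rows in question, which is untouched. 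Hence $\wt_{qt}(Q) = \wt_{qt}(Q_0)\,\wt_{qt}(\widetilde{Q'})$, and multiplying the two weight identities gives $\wt(Q)=\wt(Q_0)\wt(\widetilde{Q'})$.

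Putting this together, $F_\mu = \sum_Q \wt(Q) = \sum_\lambda \sum_{\substack{Q_0\in\mathcal{Q}_\mu^\lambda \\ \widetilde{Q'}\text{ of type }\lambda^-}} \wt(Q_0)\wt(\widetilde{Q'}) = \sum_\lambda \big(\sum_{Q_0\in\mathcal{Q}_\mu^\lambda}\wt(Q_0)\big)\big(\sum_{\widetilde{Q'}}\wt(\widetilde{Q'})\big) = \sum_\lambda F_\mu^\lambda F_{\lambda^-}$, as claimed.

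I expect no serious obstacle here — the lemma is genuinely ``immediate from the definitions'' as the text says — and the only point requiring a line of care is confirming that the $qt$-weight of a pairing depends on the row index only through $i-r+1$, so that the uniform downward shift of labels and rows preserves it; everything else is bookkeeping about which balls and pairings land in which piece.
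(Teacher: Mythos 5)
Your proof is correct and takes exactly the approach the paper intends: the text preceding \cref{lem:recursive} sets up precisely this decomposition of $Q$ into the generalized two-line queue $Q_0$ and the shifted queue $Q'$, arranges \cref{def:two-row} so that the $\mathbf{x}$-weights do not double-count row~$2$, and then declares the lemma immediate from the definitions. Your write-up simply supplies the routine verification, and your key observation --- that the $qt$-weight of a pairing depends on the row index only through $i-r+1$, hence is invariant under the simultaneous shift of labels and rows --- is exactly the point that makes the factorization work.
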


\begin{remark}
Note that in \cref{lem:recursive}, since 
	$F_{\mu}^{\lambda}$ is only nonzero when
	$\lambda_i \in \{0,2,3,4,\dots\}$, we have that 
if $\lambda_i > \lambda_{i+1}$,
then $\lambda_i^- > \lambda_{i+1}^-$.
	Also note that $(s_i \lambda)^- = s_i(\lambda^-)$ so we 
	can write $s_i \lambda^-$ without any ambiguity.
\end{remark}







%

In this section we will prove \eqref{aa} and \eqref{bb}.  
Actually we will  prove  a result which implies \eqref{aa} and \eqref{bb}.

\begin{theorem}\label{prop1}
For all $\mu$
\begin{equation}
(1-s_i)(F_{\mu} +F_{s_i\mu} )=0.
\label{eq1}
\end{equation}
If $\mu_i>\mu_{i+1}$
\begin{equation}
(1-s_i)(tx_{i+1}F_{\mu} +x_iF_{s_i\mu} )=0.
\label{eq2}
\end{equation}
\end{theorem}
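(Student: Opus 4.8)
The plan is to prove both identities \eqref{eq1} and \eqref{eq2} simultaneously by induction on the number of rows $L$, using the decomposition $F_{\mu} = \sum_{\lambda} F_{\mu}^{\lambda} F_{\lambda^-}$ from \cref{lem:recursive}. The base case is $L=1$, where a multiline queue is just a single row: $F_{\mu}$ is then a monomial $\prod_{\mu_i>0} x_i$, and both \eqref{eq1} and \eqref{eq2} are immediate (for \eqref{eq1}, $F_{\mu}+F_{s_i\mu}$ is symmetric in $x_i,x_{i+1}$ when $\mu_i,\mu_{i+1}$ are both $0$ or both positive, and when exactly one of them is positive one has $F_{\mu}=F_{s_i\mu}$ as monomials only after noting the degrees match — actually one checks directly; similarly \eqref{eq2} reduces to a one-line check). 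For the inductive step, I substitute the recursion into the left-hand sides and try to reduce the $L$-row statement to the $(L-1)$-row statement applied to $F_{\lambda^-}$ together with a purely two-row statement about the $F_{\mu}^{\lambda}$.

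Concretely, for \eqref{eq1} I would write $F_{\mu}+F_{s_i\mu} = \sum_{\lambda}(F_{\mu}^{\lambda} + F_{s_i\mu}^{\lambda})F_{\lambda^-}$, grouping the $\lambda$-sum into the cases $\lambda_i=\lambda_{i+1}$ and $\lambda_i \neq \lambda_{i+1}$; in the latter case one pairs $\lambda$ with $s_i\lambda$, using $(s_i\lambda)^- = s_i(\lambda^-)$. The hoped-for structure is that the two-row generating functions satisfy their own version of \eqref{eq1} and \eqref{eq2} (this is exactly \cref{prop1} in the case $L=2$ — though with the twist that the two-row queue has ball labels $1$ through $L$, so it is a \emph{generalized} two-line queue and needs to be handled at that level of generality), and then $(1-s_i)$ acting on each grouped term either kills $F_{\mu}^{\lambda}+F_{s_i\mu}^{\lambda}$ by symmetry, or acts on the product via a Leibniz-type rule that lets us move the operator onto $F_{\lambda^-}$ where the inductive hypothesis applies. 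The key algebraic fact I expect to use repeatedly is that $s_i$ acts on $\x$ only, so $s_i(F_{\mu}^{\lambda} F_{\lambda^-}) = (s_i F_{\mu}^{\lambda})(s_i F_{\lambda^-})$, and that for a partition-pair the combination $tx_{i+1}F + x_i s_iF$ has a clean transformation law.

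The real content — and the main obstacle — is the two-row (generalized) case, i.e.\ proving \eqref{eq1} and \eqref{eq2} directly for $F_{\mu}^{\lambda}$ when there are just rows $1$ and $2$. This cannot itself be reduced by the recursion, so it must be done by an explicit combinatorial argument: one needs an involution (or a weight-preserving rebalancing) on the set of two-line queues that swaps the roles of columns $i$ and $i+1$ and tracks exactly how $\wt_{qt}$ changes. When $\mu_i = \mu_{i+1}$, the reading order in \cref{def:wt} (balls read right-to-left within a label) means columns $i$ and $i+1$ are interchangeable up to a controlled sign/weight, giving the symmetry \eqref{bb}; when $\mu_i > \mu_{i+1}$, one must compare, column by column, the skipped- and free-ball counts for a queue with bottom row $\mu$ against those for $s_i\mu$, and show the $(1-t)$, $t^{\#\skipped}$, and $1-q^{\bullet}t^{\#\free}$ factors reorganize into the linear relation \eqref{eq2}. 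I would isolate this as a separate lemma, set up the matching of the two columns' worth of strands carefully (including the trivial-pairing exception and the wrapping factor $q^{i-r+1}$), and grind through the case analysis there; once that lemma is in hand, the induction should close routinely by the bookkeeping sketched above, and \cref{prop1} then yields \eqref{aa} and \eqref{bb} as noted in the text.
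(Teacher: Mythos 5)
Your architecture is the paper's: induction on the largest part via the decomposition of \cref{lem:recursive}, pairing $\lambda$ with $s_i\lambda$ in the sum, and reducing everything to explicit weight-tracking bijections on generalized two-line queues (including the wrapping $q$-powers and the skipped/free counts). However, the key two-row lemma as you state it would fail: the needed input is \emph{not} ``\cref{prop1} in the case $L=2$'' with the top row $\lambda$ held fixed. For instance, when $\mu_i>\mu_{i+1}=0$ and $\lambda_i>\mu_i\ge\lambda_{i+1}$, the trivial-pairing rule forces $F_{\mu}^{\lambda}=0$ while $F_{s_i\mu}^{\lambda}$ is a nonzero multiple of $x_{i+1}$ and involves no $x_i$, so $F_{\mu}^{\lambda}+F_{s_i\mu}^{\lambda}$ is not symmetric in $x_i,x_{i+1}$ and the fixed-$\lambda$ analogue of \eqref{eq1} is false. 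What the argument actually requires is a four-way comparison of $F_{\mu}^{\lambda}$, $F_{s_i\mu}^{\lambda}$, $F_{\mu}^{s_i\lambda}$, $F_{s_i\mu}^{s_i\lambda}$ in which the \emph{top} row is permuted along with the bottom one (the paper's \cref{lem:equal}, \cref{lem:symm1}, and \cref{morecases}); the crucial exchange relation is $F_{\mu}^{\lambda}+F_{s_i\mu}^{\lambda}=F_{\mu}^{s_i\lambda}+F_{s_i\mu}^{s_i\lambda}$ (and its $tx_{i+1},x_i$-weighted variant when $\mu_{i+1}=0$), which lets the paired $\lambda,s_i\lambda$ terms factor as $(F_{\mu}^{\lambda}+F_{s_i\mu}^{\lambda})(F_{\lambda^-}+F_{s_i\lambda^-})$ so that the symmetrization lands on the second factor by induction. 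Your column-swapping involutions do produce exactly these relations once you let them act on both rows simultaneously (the maps $\phi_{s_i}$, $\phi^{s_i}$, $\phi_{s_i}^{s_i}$ of the paper), so the repair is a reformulation rather than a new idea; but as written the lemma you propose to prove is not true, and the $\mu_{i+1}=0$ case needs the separate $x$-weighted treatment that your sketch does not isolate.
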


\begin{lemma}\label{lem:base}
	\cref{prop1} is true when each $\mu_j \leq 1$.
\end{lemma}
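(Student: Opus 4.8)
The claim is that \cref{prop1} holds when each $\mu_i \le 1$, so the multiline queues involved have only a single row of balls (an $L=1$ ball system, since no part of $\mu$ forces a second row). In this base case the structure is extremely simple: there are no pairings at all, hence $\wt_{qt}(Q)=1$ for every $Q$, and a multiline queue of type $\mu$ is just the ball system whose row $1$ has a ball exactly in the positions $i$ with $\mu_i=1$. So there is a \emph{unique} such $Q$ for each $\mu$, and $F_{\mu}(\mathbf x;q,t) = \prod_{i : \mu_i=1} x_i = \mathbf x^{\mu}$. I would open with this observation, reducing the lemma to a purely elementary identity about monomials.

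\textbf{Key steps.} First I would record $F_\mu = \mathbf{x}^\mu = \prod_{i:\mu_i=1}x_i$ and note the two cases for adjacent entries $(\mu_i,\mu_{i+1})$: either they are equal (both $0$ or both $1$), or $\mu_i=1, \mu_{i+1}=0$ (the case $\mu_i>\mu_{i+1}$). For \eqref{eq1}: if $\mu_i=\mu_{i+1}$ then $s_i\mu=\mu$, so $F_\mu+F_{s_i\mu}=2\mathbf x^\mu$ is symmetric in $x_i,x_{i+1}$ (the exponents of $x_i$ and $x_{i+1}$ in $\mathbf x^\mu$ agree), and $(1-s_i)$ kills it; if $\mu_i>\mu_{i+1}$, i.e. $(\mu_i,\mu_{i+1})=(1,0)$, then $s_i\mu$ has $(0,1)$ in those slots and $F_\mu + F_{s_i\mu} = x_i\prod' + x_{i+1}\prod'$ where $\prod'$ is the product over the common positions $j\ne i,i+1$ with $\mu_j=1$; this is manifestly symmetric in $x_i\leftrightarrow x_{i+1}$, so again $(1-s_i)$ annihilates it. For \eqref{eq2}, which is only asserted when $\mu_i>\mu_{i+1}$: here $tx_{i+1}F_\mu + x_iF_{s_i\mu} = tx_{i+1}x_i\prod' + x_i x_{i+1}\prod' = (1+t)x_ix_{i+1}\prod'$, and $x_ix_{i+1}\prod'$ is symmetric in $x_i,x_{i+1}$, so $(1-s_i)$ sends it to $0$. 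That completes all cases.

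\textbf{The main obstacle.} There is essentially no obstacle here — this is genuinely a base case, and the only thing to be careful about is the bookkeeping that a composition with entries in $\{0,1\}$ really does admit a unique multiline queue (one must check the ball-system constraints from \cref{def:MLQ}: at least one ball in the top row and weakly increasing ball counts down the rows are vacuous or trivially satisfied when there is only the single row, and when $\mu\equiv 0$ the statement is about empty arrays and the identities read $0=0$). The one genuinely worth-stating point is that \emph{no} nontrivial pairings occur because there is only one row, so $\wt_{qt}\equiv 1$ and the whole content collapses to the symmetry of monomials under $x_i\leftrightarrow x_{i+1}$ in the relevant slots. I would present the argument in the order above: reduce to uniqueness of $Q$ and $F_\mu=\mathbf x^\mu$, then verify \eqref{eq1} and \eqref{eq2} by direct inspection of the two adjacency cases.
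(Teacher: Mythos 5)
Your proof is correct and matches the paper's: the paper simply notes that $F_{\mu}=\prod_{\mu_i=1}x_i$ in this case and declares the rest immediate, which is exactly the case-by-case monomial verification you spell out. No issues.
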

\begin{proof}
	When each $\mu_j \leq 1$, $F_{\mu} = \prod x_j$ where the product is over all $j$ where 
	$\mu_j = 1$.  The proof is now immediate.
\end{proof}

\begin{lemma}
\cref{prop1} implies \eqref{aa} and \eqref{bb}.
\end{lemma}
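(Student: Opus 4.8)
The plan is to unwind the definition of the Hecke operator $T_i$ from \eqref{Heckeaction} and show that the two equations \eqref{eq1} and \eqref{eq2} of \cref{prop1} are exactly equivalent to \eqref{aa} and \eqref{bb}. First I would recall that, by \eqref{Heckeaction},
\[
T_i F_\mu = t F_\mu - \frac{t x_i - x_{i+1}}{x_i - x_{i+1}}(F_\mu - s_i F_\mu)
= \frac{(1-t)x_{i+1}}{x_i - x_{i+1}} F_\mu + \frac{t x_i - x_{i+1}}{x_i - x_{i+1}} s_i F_\mu,
\]
so that \eqref{aa} (the case $\mu_i > \mu_{i+1}$, which unpacks \eqref{a}, $T_i F_\mu = F_{s_i\mu}$) is literally the identity
\[
\frac{(1-t)x_{i+1}}{x_i - x_{i+1}} F_\mu + \frac{t x_i - x_{i+1}}{x_i - x_{i+1}} s_i F_\mu - F_{s_i\mu} = 0,
\]
and \eqref{bb} (the case $\mu_i = \mu_{i+1}$, unpacking \eqref{b}) says $F_\mu = s_i F_\mu$, i.e. $F_\mu$ is symmetric in $x_i, x_{i+1}$. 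These reformulations are already noted in the remark following \cref{thm:123}, so strictly the work is to see how \eqref{eq1}, \eqref{eq2} produce them.

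Next I would handle the two cases separately. If $\mu_i = \mu_{i+1}$, then $s_i\mu = \mu$, so \eqref{eq1} becomes $(1-s_i)(2F_\mu) = 0$, i.e. $s_i F_\mu = F_\mu$, which is precisely \eqref{bb}. If $\mu_i > \mu_{i+1}$, I have both equations available. From \eqref{eq1}, $s_i(F_\mu + F_{s_i\mu}) = F_\mu + F_{s_i\mu}$, hence $s_i F_{s_i\mu} = F_\mu + F_{s_i\mu} - s_i F_\mu$. From \eqref{eq2}, $s_i(t x_{i+1} F_\mu + x_i F_{s_i\mu}) = t x_{i+1} F_\mu + x_i F_{s_i\mu}$; applying $s_i$ to the left side (which swaps $x_i \leftrightarrow x_{i+1}$) gives $t x_i\, s_i F_\mu + x_{i+1}\, s_i F_{s_i\mu} = t x_{i+1} F_\mu + x_i F_{s_i\mu}$. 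Substituting the expression for $s_i F_{s_i\mu}$ from the first relation and collecting terms, I would solve the resulting linear equation for $F_{s_i\mu}$ in terms of $F_\mu$ and $s_i F_\mu$; the coefficient of $F_{s_i\mu}$ that appears is $x_{i+1} - x_i$, which is invertible in the fraction field, so the solution is unique and, after rearrangement, is exactly \eqref{aa}.

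The step I expect to be the only mildly delicate one is the algebraic elimination in the case $\mu_i > \mu_{i+1}$: one must treat $F_\mu$, $s_i F_\mu$, $F_{s_i\mu}$, $s_i F_{s_i\mu}$ as the four unknowns, use \eqref{eq1} and \eqref{eq2} together with their $s_i$-images (noting $s_i^2 = 1$), and check that the linear system determines $F_{s_i\mu}$ with denominator exactly $x_i - x_{i+1}$, matching the shape of \eqref{aa}. This is a short computation over $\Q(q,t)(x_1,\dots,x_n)$ with no conceptual obstacle; the key point is simply that $x_i - x_{i+1} \neq 0$ in this field. I would end by remarking that conversely \eqref{aa} and \eqref{bb} imply \eqref{eq1} and \eqref{eq2}, so that \cref{prop1} is genuinely equivalent to the pair \eqref{a}, \eqref{b} of \cref{thm:123} — which is what makes it a convenient target for the inductive argument of this section.
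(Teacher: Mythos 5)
Your proposal is correct and matches the paper's argument: the case $\mu_i=\mu_{i+1}$ is handled identically, and in the case $\mu_i>\mu_{i+1}$ the paper likewise expands $(1-s_i)(tx_{i+1}F_\mu+x_iF_{s_i\mu})=0$ and substitutes $s_iF_{s_i\mu}=F_\mu+F_{s_i\mu}-s_iF_\mu$ from \eqref{eq1}, then rearranges to get \eqref{aa}. The elimination you describe is exactly this computation, so there is nothing to add.
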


\begin{proof}
If $\mu_i=\mu_{i+1}$, then $F_{s_i \mu}= F_{\mu}$, so \eqref{eq1} implies that 
$(1-s_i)F_{\mu}=0.$ This implies \eqref{bb}.

If $\mu_i>\mu_{i+1}$, by \eqref{eq2} we have that 
\[
tx_{i+1}F_{\mu}+x_iF_{s_i\mu}-tx_is_iF_{\mu}-x_{i+1}s_i F_{s_i\mu}=0.
\]
Using \eqref{eq1} to replace the quantity $s_i F_{s_i \mu}$ above, we get 
\[
tx_{i+1}F_{\mu}+x_iF_{s_i\mu}-tx_is_iF_{\mu}-x_{i+1}(F_{\mu}+F_{s_i \mu} - s_i F_{\mu})=0.
\]
This is easily seen to be equivalent to \eqref{aa}.

\end{proof}

Our next goal is to compare the quantities $F_{\mu}^{\lambda} $, $F_{s_i\mu}^{\lambda} $, $F_{\mu}^{s_i\lambda} $, $F_{s_i\mu}^{s_i\lambda} .$  Without loss of generality, we can assume that $\mu_i \geq \mu_{i+1}$ and $\lambda_i \geq \lambda_{i+1}$.
In \cref{lem:equal} we will treat the case that $\mu_i = \mu_{i+1}$, or 
$\lambda_i = \lambda_{i+1}$, and in 
\cref{lem:symm1} we will treat the case that $\mu_i > \mu_{i+1}>0$.

\begin{defn}
Let $\lambda$ and $\mu$ be weak compositions with $n$ parts.
Recall the definition of $\mathcal{Q}_{\mu}^{\lambda}$
from \cref{def:two-row}.  
Given two permutations $\pi, \sigma \in S_n$, we 
define $\phi_{\pi}^{\sigma}: 
\mathcal{Q}_{\mu}^{\lambda} \rightarrow \mathcal{Q}_{\pi\mu}^{\sigma \lambda}$ to be the map from 
$\mathcal{Q}_{\mu}^{\lambda}$ to $\mathcal{Q}_{\pi\mu}^{\sigma \lambda}$ 
which permutes the contents of the bottom and top row of the multiline queue according to $\pi$ and $\sigma$ as $\pi\mu=(\mu_{\pi(1)},\ldots,\mu_{\pi(n)})$ and $\sigma\lambda=(\lambda_{\sigma(1)},\ldots,\lambda_{\sigma(n)})$, 
while preserving the pairings between the balls. (Set $\phi_{\pi}^{\sigma}Q=\emptyset$ if the result
is not a valid  multiline queue.) 
Usually we will choose $\pi, \sigma \in \{s_i, \id, \omega\}$,
where
        $\omega \mu$ is as in \eqref{action3}.
Note that  $\phi_{s_i}^{s_i}$ is a bijection.  We also use the notation $\phi^{s_i}=\phi_{\id}^{s_i}$ and $\phi_{s_i}=\phi_{s_i}^{\id}$. See \cref{fig:Lem3.7} for an example of $\phi^{s_1}$.
\end{defn}

For ease of notation, we will identify the balls of the multiline queue $Q\in\mathcal{Q}_{\mu}^{\lambda}$ with their labels $\lambda_i\in\lambda$ and $\mu_i\in\mu$. For $Q'=\phi_{\pi\mu}^{\sigma\lambda} Q$, when we refer to the balls $\mu_i$ and $\lambda_i$, we are referring to the balls in row 1, column $\pi(i)$ and row 2, column $\sigma(i)$ of $Q'$, respectively. For instance in the example of \cref{fig:Lem3.7}, the ball $\lambda_1$ corresponds to the ball labeled 2 in the top row of column 1 of $Q_0$ and column 2 of $\phi^{s_1}Q_0$, respectively.

\begin{lemma}\label{lem:equal}
If $\mu_i=\mu_{i+1}\ge 0$, then
\[
F_{\mu}^{\lambda} =F_{s_i\mu}^{\lambda} =F_{\mu}^{s_i\lambda} =F_{s_i\mu}^{s_i\lambda} .
\]
If $\lambda_i=\lambda_{i+1}$, then
\[
F_{\mu}^{\lambda} =F_{\mu}^{s_i\lambda} \quad \mbox{and}\quad F_{s_i\mu}^{\lambda} =F_{s_i\mu}^{s_i\lambda} .
\]
\end{lemma}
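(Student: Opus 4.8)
The plan is to unwind \cref{def:two-row} and treat the two halves of the statement separately. First I would dispose of the half ``if $\lambda_i=\lambda_{i+1}$'': then $s_i\lambda=\lambda$ as a sequence, so $F_\mu^{s_i\lambda}$ and $F_{s_i\mu}^{s_i\lambda}$ are literally the same objects as $F_\mu^\lambda$ and $F_{s_i\mu}^\lambda$ and there is nothing to check. For the half ``if $\mu_i=\mu_{i+1}$'', the same observation with the roles reversed gives $s_i\mu=\mu$, hence $F_{s_i\mu}^\lambda=F_\mu^\lambda$ and $F_{s_i\mu}^{s_i\lambda}=F_\mu^{s_i\lambda}$; combined with the first half we may assume $\lambda_i\ne\lambda_{i+1}$, so the only nontrivial assertion that remains is $F_\mu^\lambda=F_\mu^{s_i\lambda}$. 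Since every two-line queue counted in $F_\mu^\lambda$ or in $F_\mu^{s_i\lambda}$ carries the same $\mathbf x$-weight $\prod_{\mu_j>0}x_j$, it is enough to compare the $\wt_{qt}$-generating functions.

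When $\mu_i=\mu_{i+1}=0$ I expect a clean bijective proof. Row $1$ then has no ball in columns $i$ or $i+1$, and the map that interchanges columns $i$ and $i+1$ of a two-line queue (carrying all labels and strand connectivities along) sends $\mathcal Q_\mu^\lambda$ to $\mathcal Q_\mu^{s_i\lambda}$. I would check that it is a valid and $\wt_{qt}$-preserving bijection: because $\lambda_i\ne\lambda_{i+1}$ the two balls possibly sitting in columns $i,i+1$ of row $2$ have distinct labels, so the interchange does not disturb the reading order of any row-$2$ ball and the transported matching is still a legal multiline queue; and because columns $i,i+1$ of row $1$ are empty, they never enter a $\#\skipped$ or $\#\free$ count and never trigger a forced trivial pairing, so each nontrivial pairing keeps its label, its row, its $\#\skipped$, its $\#\free$ and its wrap-status, hence its $\wt_{qt}$.

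The case $\mu_i=\mu_{i+1}=v>0$ will be the main obstacle. Here the bare column interchange fails to preserve $\wt_{qt}$ — the two row-$1$ balls in columns $i,i+1$ can be skipped by different strands before and after the swap, and a short rightward strand joining those columns can become one that wraps all the way around the cylinder — and in general there is not even a termwise bijection between $\mathcal Q_\mu^\lambda$ and $\mathcal Q_\mu^{s_i\lambda}$: already for $\mu=(2,2,0,0,0)$ the multisets of $\wt_{qt}$-values of $\mathcal Q_\mu^{(2,0,2,0,0)}$ and $\mathcal Q_\mu^{(0,2,2,0,0)}$ differ, although both generating functions equal $\frac{(1-t)q}{1-qt}\,x_1x_2$. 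So I would establish $F_\mu^\lambda=F_\mu^{s_i\lambda}$ by an aggregate count: partition $\mathcal Q_\mu^\lambda$, and likewise $\mathcal Q_\mu^{s_i\lambda}$, by the matching data ``external'' to the pair of columns $\{i,i+1\}$ — the matching among the row-$1$ and row-$2$ balls in columns $\ne i,i+1$, together with the list of which external row-$2$ balls attach to $(1,i)$ or $(1,i+1)$ — and for each such external configuration enumerate the finitely many local completions permitted by the forced-trivial-pairing rule, compute their $qt$-contributions, and verify by an elementary geometric-series identity in $q$ and $t$ that the total is unchanged when $\lambda_i$ and $\lambda_{i+1}$ are swapped. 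The delicate point is that the $qt$-weights of the external strands themselves depend on how $(1,i)$ and $(1,i+1)$ are occupied in the local completion, so the blocks do not factor as (a fixed external weight) times (a local sum); getting the bookkeeping of $\#\skipped$, $\#\free$ and wrap-factors to balance in each block is the crux.
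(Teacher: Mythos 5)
Your reductions are sound, and your treatment of the case $\mu_i=\mu_{i+1}=0$ is a complete argument: since $\lambda_i\ne\lambda_{i+1}$ the two row-$2$ balls at columns $i,i+1$ have distinct labels, so interchanging them preserves the reading order, and since row $1$ is empty at columns $i,i+1$ nothing there is ever counted in $\#\skipped$ or $\#\free$, no forced trivial pairing is created or destroyed, and the wrap status of each strand is unchanged. You have also correctly diagnosed that the remaining case $\mu_i=\mu_{i+1}=v>0$, $\lambda_i\ne\lambda_{i+1}$ admits no weight-preserving bijection: your example checks out --- with $\mu=(2,2,0,0,0)$ and $X=\tfrac{(1-t)q}{1-qt^2}$, the two weight multisets are $\bigl\{X\tfrac{1-t}{1-qt},\,Xt\bigr\}$ and $\bigl\{X,\,X\tfrac{qt(1-t)}{1-qt}\bigr\}$, which differ termwise but both sum to $\tfrac{(1-t)q}{1-qt}$. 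This is a sharper analysis than the paper gives, since the paper asserts the lemma ``follows directly from the definitions'' and offers no argument.

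The gap is that you never prove the case you have identified as the crux. Your plan --- partition by the matching data external to columns $i,i+1$ and verify a geometric-series identity block by block --- is plausible, and the identity needed is of exactly the flavor that appears in Case (2) of \cref{lem:symm1} (compare $\tfrac{1-t}{1-q^{j}t^{f}}+t$ with $1+\tfrac{q^{j}t^{f}(1-t)}{1-q^{j}t^{f}}$ in your own example), but you explicitly leave unresolved the bookkeeping of $\#\skipped$, $\#\free$ and wrap factors across blocks, and that bookkeeping is the entire content of the statement here. In particular you still need to handle the sub-cases in which one of $\lambda_i,\lambda_{i+1}$ equals $v$ (so that forced trivial pairings at columns $i$ and $i+1$ appear and disappear under the swap) and in which exactly one of $\lambda_i,\lambda_{i+1}$ is $0$. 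As written, the proposal establishes the lemma only when $\lambda_i=\lambda_{i+1}$ or $\mu_i=\mu_{i+1}=0$; the case $\mu_i=\mu_{i+1}>0$ with $\lambda_i\ne\lambda_{i+1}$ remains an unproved plan.
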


\begin{proof}
The equalities when $\lambda_i=\lambda_{i+1}$ are immediate since $s_i\lambda=\lambda$. If $\mu_i=\mu_{i+1}=0$, swapping $\lambda_{i}$ and $\lambda_{i+1}$ does not change the weights of any pairings, since no balls are being skipped in columns $i,i+1$. If $\mu_i=\mu_{i+1}>0$, we use the fact that $\lambda_i \leq \mu_i$ if $\mu_i>0$, and $\lambda_{i+1} \leq \mu_{i+1}$, which means that the only possible pairings between elements in columns $i,i+1$ are trivial ones, and thus no pairings from columns $i,i+1$ are skipping over the balls $\mu_i,\mu_{i+1}$. For an example of the $\mu_i=\mu_{i+1}$ case, see \cref{fig:Lem3.7}.
\end{proof}

\begin{figure}[!ht]
  \centerline{\includegraphics[width=0.8\linewidth]{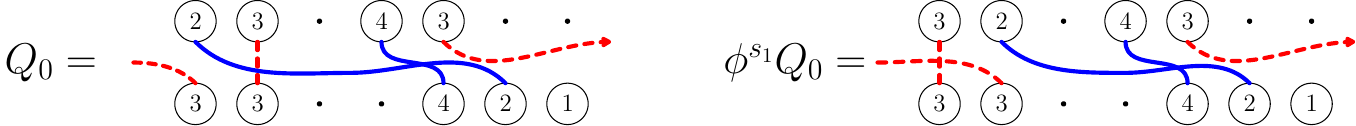}}
\centering
\caption{For $\mu=(3,3,0,0,4,2,1)$ and $\lambda=(2,3,0,4,3,0,0)$, $Q_0\in\mathcal{Q}_{\mu}^{\lambda}$ and $\phi^{s_1} Q_0$ are shown, with equal weights of their respective pairings.}\label{fig:Lem3.7}
\end{figure}

Having taken care of the cases in \cref{lem:equal}, we will now assume without loss of generality
that $\mu_i>\mu_{i+1}$ and $\lambda_i > \lambda_{i+1}$.

\begin{lemma}\label{lem:circ}
	Recall the action of $\omega$ on compositions from \eqref{action3}.
We have 
\[F_{\mu}^{\lambda}(x_1,\ldots,x_n)=q^{\max(\mu_n-1,0)-\max(\lambda_n-1,0)} F_{\omega\mu}^{\omega\lambda}(x_n,x_1,\ldots,x_{n-1}).\]
\end{lemma}

\begin{proof}
There are five cases for the last column of $Q\in\mathcal{Q}_{\mu}^{\lambda}$, which we show in \cref{fig:circ} along with the corresponding multiline queues $\phi_{\omega}^{\omega}Q$. When $\lambda_n=\mu_n$, the weights of all pairings in $Q$ vs. $\phi_{\omega}^{\omega}Q$ are identical. When $\lambda_n\neq \mu_n$, the weights of all pairings are identical except for the pairings from $\lambda_n$ and the pairings to $\mu_n$:
\begin{itemize}
\item if $0<\lambda_n<\mu_n$ we have $\wt(\phi_{\omega}^{\omega}Q)=q^{\mu_n-\lambda_n}\wt(Q)$, since the pairing to $\mu_n$ is now cycling, but the pairing from $\lambda_n$ is no longer cycling. 
\item if $\lambda_n=0$, we have $\wt(\phi_{\omega}^{\omega}Q)=q^{\mu_n-1}\wt(Q)$, since the pairing to $\mu_n$ is now cycling.
\item if $\mu_n=0$, we have $\wt(\phi_{\omega}^{\omega}Q)=q^{-(\lambda_n-1)}\wt(Q)$, since the pairing from $\lambda_n$ is no longer cycling.
\end{itemize}
Thus we get the desired equality.

\begin{figure}[!ht]
  \centerline{\includegraphics[width=\textwidth]{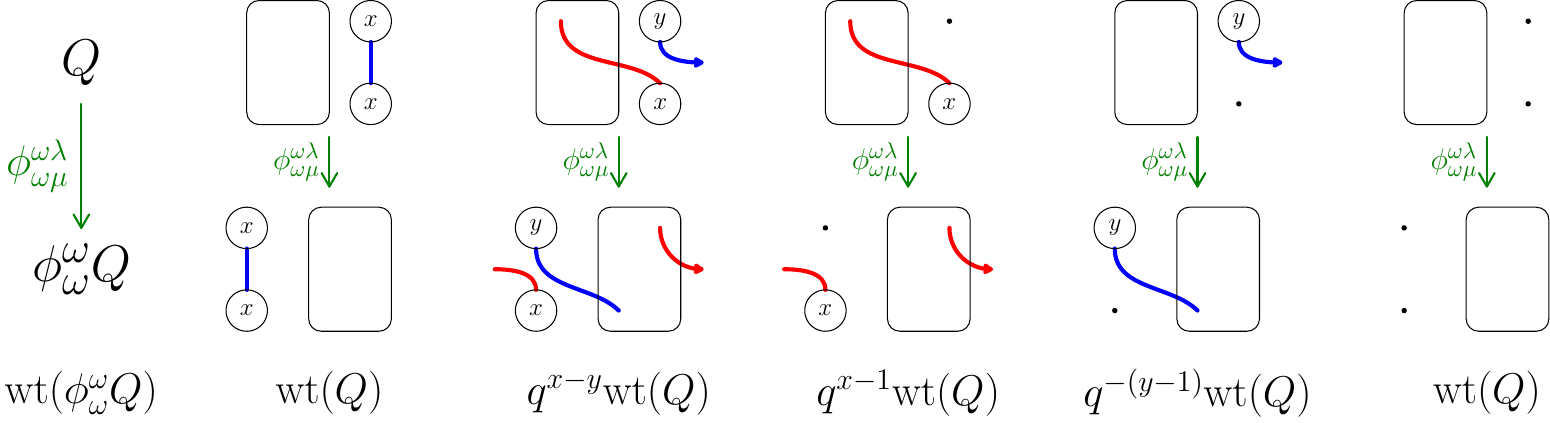}}
\centering
\caption{The five cases of the last column of $Q\in\mathcal{Q}_{\mu}^{\lambda}$: when $\mu_n=\lambda_n=x>0$, when $x=\mu_n>\lambda_n=y>0$, when $\mu_n=x$ and $\lambda_n=0$, when $\mu_n=0$ and $\lambda_n=y$, and when $\lambda_n=\mu_n=0$.}
\label{fig:circ}
 \end{figure}
 \end{proof}

\begin{lemma}\label{lem:symm1}
Suppose $\mu_i > \mu_{i+1}>0$, and $\lambda_i > \lambda_{i+1}\geq 0$.
\begin{enumerate}
\item If  $\mu_{i+1} > \lambda_i$,
		\label{symm1a}
\[
tF_{\mu}^{\lambda} =F_{s_i\mu}^{\lambda} =tF_{\mu}^{s_i\lambda} =F_{s_i\mu}^{s_i\lambda} .
\]
\item If  $\mu_{i+1}=\lambda_i$,
\[
F_{s_i\mu}^{\lambda} = tF_{\mu}^{s_i\lambda} \quad \mbox{and} 
\quad F_{\mu}^{\lambda} +F_{s_i\mu}^{\lambda} =F_{\mu}^{s_i\lambda} +F_{s_i\mu}^{s_i\lambda}. 
\]
\item If $\mu_{i+1}<\lambda_i$,
\[
F_{\mu}^{\lambda} =F_{s_i\mu}^{s_i\lambda} \quad \mbox{and} \quad F_{s_i\mu}^{\lambda} =F_{\mu}^{s_i\lambda} =0.
\]
\item If $\mu_{i+1}<\lambda_{i+1}$, 
\[
F_{\mu}^{\lambda}=F_{s_i\mu}^{s_i\lambda}=F_{\mu}^{s_i\lambda}=F_{s_i\mu}^{\lambda}=0.
\]
\end{enumerate}
\end{lemma}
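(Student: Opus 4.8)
\textbf{Proof proposal for \cref{lem:symm1}.}

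The plan is to analyze a generalized two-line queue $Q_0 \in \mathcal{Q}_{\mu}^{\lambda}$ by focusing on columns $i$ and $i+1$, and to understand how applying the maps $\phi_{s_i}$, $\phi^{s_i}$, and $\phi_{s_i}^{s_i}$ affects both the validity of the resulting two-line queue and its $qt$-weight. Since $\mu_i > \mu_{i+1} > 0$ and $\lambda_i > \lambda_{i+1} \geq 0$, each of columns $i$ and $i+1$ in the top row carries a ball (labeled $\lambda_i$, resp.\ $\lambda_{i+1}$, or the $i+1$ column may be empty if $\lambda_{i+1}=0$), and each of columns $i$, $i+1$ in the bottom row carries a ball (labeled $\mu_i$, resp.\ $\mu_{i+1}$). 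The key point is that by \cref{lem:circ} we may cyclically rotate so that columns $i$ and $i+1$ become the last two columns; then the relevant local picture is a $2 \times 2$ subarray, and the pairings entering/leaving these two columns from the rest of the queue are governed only by the labels and the left-to-right reading order in the weight definition. First I would set up this reduction carefully, so that the analysis is purely local to a constant-size window.

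Next I would enumerate the cases according to how $\lambda_i, \lambda_{i+1}, \mu_i, \mu_{i+1}$ interleave, since the pairing rules in \cref{def:wt} (a ball in row $2$ with label $\ell$ may only be matched to a ball in row $1$ with label $\geq \ell$, and trivial pairing is forced when a free ball sits directly below) determine exactly which matchings are legal. The hypothesis $\mu_{i+1} > \lambda_i$ in part \eqref{symm1a} forces \emph{both} top-row balls in columns $i, i+1$ to be matched to bottom-row balls whose labels are $\geq \lambda_i > \lambda_{i+1}$, and in particular neither can be matched to the bottom-row ball in column $i+1$ if that would violate the label inequality after swapping; the upshot I expect is that swapping the bottom row (via $\phi_{s_i}$) is a weight-preserving bijection up to a single factor of $t$ coming from a skipped ball, while swapping the top row (via $\phi^{s_i}$) is a clean weight-preserving bijection — giving the chain $tF_{\mu}^{\lambda} = F_{s_i\mu}^{\lambda} = tF_{\mu}^{s_i\lambda} = F_{s_i\mu}^{s_i\lambda}$. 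For part \eqref{symm1a} with $\lambda_{i+1}=0$ one must separately check the column-$(i+1)$-empty subcase, but the same bookkeeping applies. For part (2), when $\mu_{i+1} = \lambda_i$, the ball labeled $\lambda_i$ in the top row \emph{may or may not} be matched to the bottom-row ball labeled $\mu_{i+1}$, so $\phi_{s_i}^{s_i}$ is no longer a clean bijection on each piece; instead I would show it is a bijection from $\mathcal{Q}_{\mu}^{\lambda} \sqcup \mathcal{Q}_{s_i\mu}^{\lambda}$ to $\mathcal{Q}_{\mu}^{s_i\lambda} \sqcup \mathcal{Q}_{s_i\mu}^{s_i\lambda}$ that matches weights, by tracking how the critical pairing toggles between ``wrapping / non-wrapping'' and between the two columns. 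For part (3), when $\mu_{i+1} < \lambda_i$, the ball labeled $\lambda_i$ can legally reach the bottom-row ball labeled $\mu_i$ but \emph{not} $\mu_{i+1}$; after the swap $s_i\mu$ puts the smaller value $\mu_{i+1}$ in column $i$, which makes $F_{s_i\mu}^{\lambda}=0$ and $F_{\mu}^{s_i\lambda}=0$ impossible to populate, while $\phi_{s_i}^{s_i}$ restricted to the nonvanishing pieces is a weight-preserving bijection, yielding $F_{\mu}^{\lambda} = F_{s_i\mu}^{s_i\lambda}$.

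The main obstacle I anticipate is the careful accounting of $\#\skipped$ and $\#\free$ under the column swaps: because the weight of a pairing depends on how many \emph{free} (not-yet-matched, not-trivially-paired) balls lie strictly between its endpoints in the cyclic order, swapping two adjacent columns can change which balls are counted as skipped for pairings that ``jump over'' columns $i, i+1$, and can change the reading order in which pairings are processed. I would handle this by arguing that the \emph{multiset} of skipped-counts over all pairings is invariant: a pairing originating to the left of column $i$ and landing to the right of column $i+1$ sees the same total number of free balls in $\{i, i+1\}$ regardless of their order, and a pairing with exactly one endpoint in $\{i,i+1\}$ gets its skipped-count shifted by at most $1$ in a way that is exactly compensated by the companion pairing or by the explicit $t$ (resp.\ $q$) factor appearing in the statement. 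Verifying this invariance case-by-case — especially distinguishing whether the ball in column $i$ or $i+1$ of the relevant row is itself free, trivially paired, or already matched at the moment its neighbor's pairing is read — is the technical heart of the argument, and is where I would spend the bulk of the effort; the rest is routine once the local pictures in \cref{fig:circ}-style figures are drawn.
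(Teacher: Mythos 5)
Your overall setup---localizing to columns $i,i+1$ via the cyclic symmetry of \cref{lem:circ}, working with the maps $\phi_{s_i}$, $\phi^{s_i}$, $\phi_{s_i}^{s_i}$, and splitting according to how $\mu_i,\mu_{i+1},\lambda_i,\lambda_{i+1}$ interleave---is exactly the paper's, and your treatments of cases (1) and (3) are essentially the intended ones: in case (1) the single factor of $t$ comes from the strand reaching the bottom-row ball labeled $\mu_i$ skipping over the still-free ball labeled $\mu_{i+1}$, and in case (3) the vanishing is the ``larger label above a smaller one'' obstruction while $\phi_{s_i}^{s_i}$ trades one skipped ball for another.

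The gap is in case (2), which is the heart of the lemma. You propose to exhibit a bijection from $\mathcal{Q}_{\mu}^{\lambda}\sqcup\mathcal{Q}_{s_i\mu}^{\lambda}$ to $\mathcal{Q}_{\mu}^{s_i\lambda}\sqcup\mathcal{Q}_{s_i\mu}^{s_i\lambda}$ ``that matches weights.'' No such bijection exists. Write $x=\mu_{i+1}=\lambda_i$ and take $Q\in\mathcal{Q}_{\mu}^{\lambda}$ in which the top-row ball $\lambda_i$ is paired to the bottom-row ball $\mu_{i+1}$; if $f$ is the number of free balls at the moment this pairing is placed, then $\wt(Q)$, $\wt(\phi_{s_i}Q)$, $\wt(\phi^{s_i}Q)$, $\wt(\phi_{s_i}^{s_i}Q)$ equal $\wt(\phi^{s_i}Q)$ times, respectively, $\tfrac{1-t}{1-q^{x-1}t^{f}}$, $t$, $1$, and $\tfrac{q^{x-1}t^{f}(1-t)}{1-q^{x-1}t^{f}}$ (in $\phi^{s_i}Q$ the critical pairing becomes trivial; in $\phi_{s_i}^{s_i}Q$ it wraps around the cylinder, picking up $q^{x-1}$ and skipping $f-1$ free balls). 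Two of these four weights carry the denominator $1-q^{x-1}t^{f}$ and two do not, so no pairing-off of individual terms can match weights. What actually closes the case is the per-orbit rational identity $\tfrac{1-t}{1-q^{x-1}t^{f}}+t=1+\tfrac{q^{x-1}t^{f}(1-t)}{1-q^{x-1}t^{f}}$, combined with the clean equality $\wt(Q)=\wt(\phi_{s_i}^{s_i}Q)$ on the complementary set of queues in which $\lambda_i$ is not paired to $\mu_{i+1}$ (a set on which $\phi^{s_i}$ and $\phi_{s_i}$ produce no valid queue at all). Relatedly, your hoped-for ``invariance of the multiset of skipped-counts'' is not what happens: the skip and wrap statistics genuinely change under these swaps, and the proof must compute exactly how they change rather than argue that they do not.
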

\begin{proof}
\begin{figure}[!ht]
  \centerline{\includegraphics[width=\textwidth]{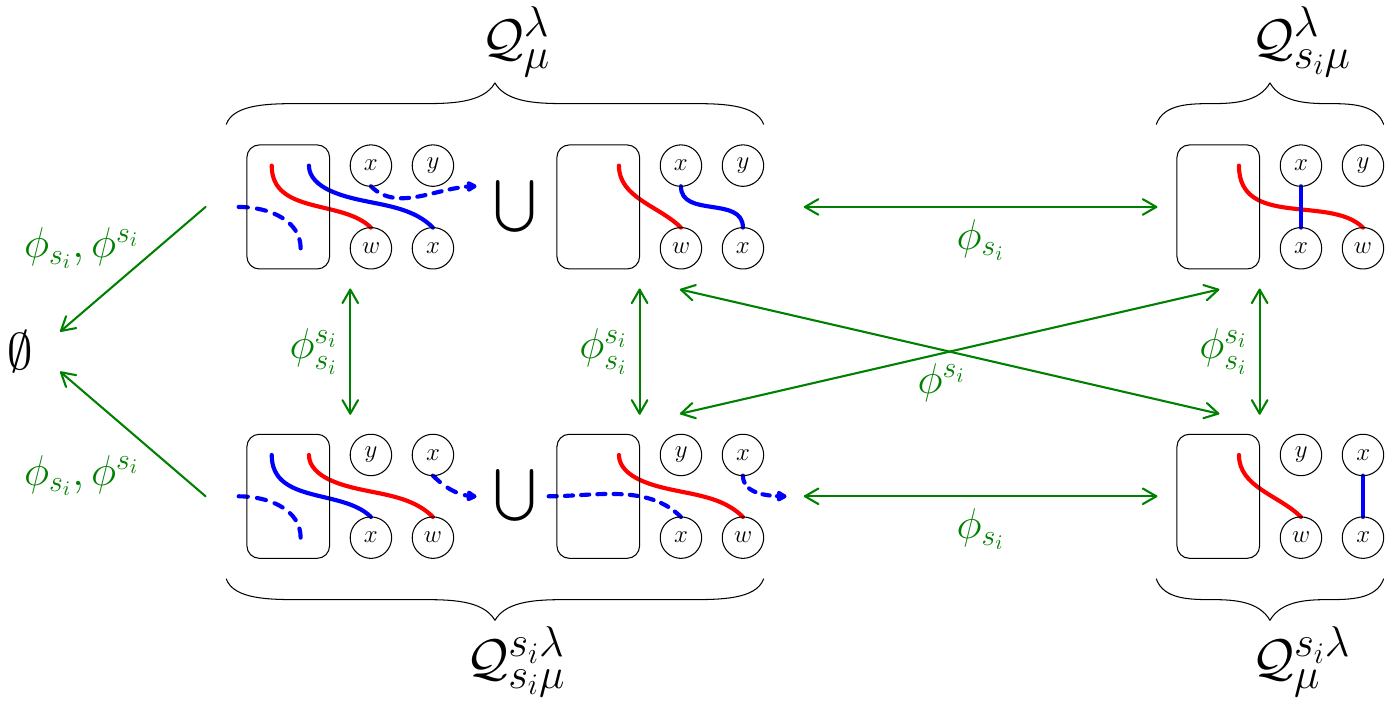}}
\centering
\caption{This diagram illustrates case (2) of \cref{lem:symm1}. Let $\mu_i=w$, $\lambda_i=\mu_{i+1}=x$, and $\lambda_{i+1}=y$, with $w > x > y \geq 0$.} 
\label{fig:phi}
 \end{figure}

Cases (1), (3), and (4) are straightforward, so we begin by taking care of these cases.
In Case (1), the maps $\phi_{s_i}$, $\phi^{s_i}$, and $\phi_{s_i}^{s_i}$ define bijections between $\mathcal{Q}_{\mu}^{\lambda}$ and the sets $\mathcal{Q}_{s_i\mu}^{\lambda}$, $\mathcal{Q}_{\mu}^{s_i\lambda}$, and $\mathcal{Q}_{s_i\mu}^{s_i\lambda}$ respectively. The only difference between the weights of the multiline queues in these four sets comes from whether or not the pairing to ball $\mu_i$ skips over the ball 
$\mu_{i+1}$.  When this pairing does skip over ball $\mu_{i+1}$,
we get an extra contribution of $t$ to the weight.
Therefore we have $tF_{\mu}^{\lambda}=tF_{\mu}^{s_i\lambda}=F_{s_i\mu}^{\lambda}=F_{s_i\mu}^{s_i\lambda}$.

In Case (3), $F_{s_i\mu}^{\lambda}=F_{\mu}^{s_i\lambda}=0$ since a larger label cannot be above a smaller one in a valid multiline queue. Thus we must show $F_{\mu}^{\lambda}=F_{s_i\mu}^{s_i\lambda}$.

If $\mu_i=\lambda_i$, the equality is immediate. Otherwise, let $Q\in \mathcal{Q}_{\mu}^{\lambda}$ be a generalized multiline queue, and let $\phi_{s_i}^{s_i}Q \in \mathcal{Q}_{s_i\mu}^{s_i\lambda}$ be the corresponding queue with the same ball pairings. In $Q$, the pairing from $\lambda_i$ skips over ball $\mu_{i+1}$, 
contributing a $t$ to $\wt(Q)$, whereas in $\phi_{s_i}^{s_i}Q$ the 
pairing to $\mu_i$ skips over $\mu_{i+1}$, contributing a $t$ to $\wt(\phi_{s_i}^{s_i}Q)$. The rest of the pairings contribute identical weights, and thus $\wt(Q)=\wt(\phi_{s_i}^{s_i}Q)$, so the equality follows.

For case (4), since we have assumed $\lambda_{i+1}<\lambda_i$, when $\mu_{i+1}<\lambda_{i+1}$, $\mathcal{Q}_{\mu}^{\lambda}=\emptyset$ by definition. Combined with the assumption $\mu_{i+1}<\mu_i$, we get the rest from Case (3).

In what follows, we will write $\lambda_i \sim \mu_{i+1}$ or $\lambda_i \not\sim \mu_{i+1}$ based on whether ball $\lambda_i$ is paired with ball $\mu_{i+1}$.

Finally consider Case (2), illustrated in \cref{fig:phi}. In this diagram, $\mathcal{Q}_{\mu}^{\lambda}$ consists of the set of multiline queues where $\lambda_i \not\sim \mu_{i+1}$ (the left multiline queue under the curly brace) and the set where $\lambda_i\sim\mu_{i+1}$ (the right multiline queue under the curly brace). The images of the maps $\phi_{s_i}, \phi^{s_i},\phi_{s_i}^{s_i}$ applied to $\mathcal{Q}_{\mu}^{\lambda}$ are then the sets $\mathcal{Q}_{s_i\mu}^{\lambda}$, $\mathcal{Q}_{\mu}^{s_i\lambda}$, and $\mathcal{Q}_{s_i\mu}^{s_i\lambda}$, respectively, with the arrows illustrating the various bijections. Thus the top row of the diagram consists of the sets of multiline queues, the sum of whose weights is $F_{\mu}^{\lambda} +F_{s_i\mu}^{\lambda}$, and the bottom row consists of the sets of multiline queues, the sum of whose weights is $F_{\mu}^{s_i\lambda} +F_{s_i\mu}^{s_i\lambda}$, and the map $\phi_{s_i}^{s_i}$ is the bijection between those sets.

We claim that to prove the lemma, it suffices to prove it for $i=n-1$.
	To see this, note that for $i<n-1$, \cref{lem:circ} implies that 
	\begin{equation*}
		\frac{F^{\lambda}_{\mu}(x_1,\dots,x_n)}{F^{\omega \lambda}_{\omega \mu}(x_n,x_1,\dots,x_{n-1})} = 
	q^{\max(\mu_n-1,0)-\max(\lambda_n-1,0)} = 
		\frac{F^{\lambda}_{s_i \mu}(x_1,\dots,x_n)}{F^{\omega \lambda}_{\omega (s_i \mu)}(x_n,x_1,\dots,x_{n-1})}.
	\end{equation*}
Therefore for $i<n-1$ we have 
	\begin{equation} \label{iter1}
		\frac{F^{\lambda}_{s_i \mu}(x_1,\dots,x_n)}{F^{\lambda}_{\mu}(x_1,\dots,x_n)} = 
		\frac{F^{\omega \lambda}_{\omega(s_i \mu)}(x_n,x_1,\dots,x_{n-1})}{F^{\omega \lambda}_{\omega \mu}(x_n,x_1,\dots,x_{n-1})} = 
		\frac{F^{\omega \lambda}_{s_{i+1} \omega(\mu)}(x_n,x_1,\dots,x_{n-1})}{F^{\omega \lambda}_{\omega \mu}(x_n,x_1,\dots,x_{n-1})}.
	\end{equation}
Similarly for $i<n-1$ we have 
	\begin{equation}\label{iter2}
		\frac{F^{s_i \lambda}_{\mu}(x_1,\dots,x_n)}{F^{\lambda}_{\mu}(x_1,\dots,x_n)} = 
		\frac{F^{s_{i+1} \omega (\lambda)}_{\omega(\mu)}(x_n,x_1,\dots,x_{n-1})}{F^{\omega \lambda}_{\omega \mu}(x_n,x_1,\dots,x_{n-1})} \text{ and }
	\end{equation}
	\begin{equation}\label{iter3}
		\frac{F^{s_i \lambda}_{s_i \mu}(x_1,\dots,x_n)}{F^{\lambda}_{\mu}(x_1,\dots,x_n)} = 
		\frac{F^{s_{i+1} \omega (\lambda)}_{s_{i+1} \omega(\mu)}(x_n,x_1,\dots,x_{n-1})}{F^{\omega \lambda}_{\omega \mu}(x_n,x_1,\dots,x_{n-1})}, 
	\end{equation}
	so by iterating \eqref{iter1}, \eqref{iter2}, and \eqref{iter3},
	we can reduce the proof of the lemma to 
the case $i=n-1$.

When $i=n-1$ and $i+1=n$, the transposition affects only the rightmost two columns.  Write
$\mu_{i+1}=\lambda_i=x$ and consider $Q\in \mathcal{Q}_{\mu}^{\lambda}$.

\begin{enumerate}
\item 
\label{c0} 
Observe that $\wt(\phi_{s_i}Q)=t\wt(\phi^{s_i}Q)$ when $\lambda_i\sim \mu_{i+1}$ because the pairing to ball $\mu_{i}$ in $\phi_{s_i}Q$ skips over ball $\mu_{i+1}$, contributing an extra $t$. This proves the equality
$F_{s_i\mu}^{\lambda} = tF_{\mu}^{s_i\lambda}$.
\item 
\label{c1} When $\lambda_i\not\sim \mu_{i+1}$ in $Q$, we have $\wt(Q)=\wt(\phi_{s_i}^{s_i}Q)$. This is because in $Q$, the pairing from ball $\lambda_i$ obtains an extra $t$ by skipping over ball $\mu_{i+1}$, whereas in $\phi_{s_i}^{s_i}Q$ 
the pairing to ball $\mu_i$ skips over ball $\mu_{i+1}=x$.

\item 
\label{c2} Now consider $\phi^{s_i}Q$. 
This is only nonempty if in $Q$, $\lambda_i\sim \mu_{i+1}$.  Moreover 
$\phi^{s_i}$ defines a bijection from $\{Q \ \vert \ Q\in \mathcal{Q}_{\mu}^{\lambda}, \lambda_i \sim \mu_{i+1}\}$
to $\mathcal{Q}_{\mu}^{s_i \lambda}$.  So consider $Q$ where $\lambda_i \sim \mu_{i+1}$.

Let $f$ be the number of free balls remaining in $Q$ right before we pair the ball $\lambda_i$.  The weight of the pairing
$\lambda_i \sim \mu_{i+1}$ in $Q$ is 
$\frac{(1-t)}{1-q^{x-1}t^{f}}$. 
Since $i$ and $i+1$ are rightmost, ball $\lambda_i$ is the first instance of label $x$ to be paired. Thus every other pairing in $Q$
gets the same weight as the corresponding pairing in $\phi^{s_i} Q$, and so
 $\wt(Q)=\wt(\phi^{s_i}Q)
\frac{(1-t)}{1-q^{x-1}t^{f}}$. 

\item 
\label{c3} Similarly, when $\lambda_i \sim \mu_{i+1}$, we have
 $\wt(\phi_{s_i}^{s_i}Q)=\wt(\phi_{s_i}Q)\frac{q^{x-1}t^{f-1}(1-t)}{1-q^{x-1}t^{f}}$, since the pairing 
in $\phi_{s_i}^{s_i} Q$ from ball $\lambda_i$ to ball $\mu_{i+1}$ cycles and skips all the free balls except for ball $\mu_{i+1}$, hence contributing $t^{f-1}$. 
By \cref{c0},  we have $\wt(\phi_{s_i}^{s_i}Q)=\wt(\phi^{s_i}Q)\frac{q^{x-1}t^{f}(1-t)}{1-q^{x-1}t^{f}}$.

\item 
\label{c4} By \cref{c2} and \cref{c3}, for $Q\in \mathcal{Q}_{\mu}^{\lambda}$ with 
$\lambda_i \sim \mu_{i+1}$, we have
$\wt(\phi_{s_i}^{s_i} Q) = q^{x-1} t^f \wt(Q)$.

\end{enumerate}

Let us now write down the proof:
\begin{align*}
F_{\mu}^{\lambda}-F_{s_i\mu}^{s_i\lambda}&=\sum_{Q\in\mathcal{Q}_{\mu}^{\lambda}} \Big(\wt(Q)-\wt(\phi_{s_i}^{s_i}Q)\Big)\\
&=\sum_{\substack{Q\in\mathcal{Q}_{\mu}^{\lambda},\\\ \lambda_i \sim \mu_{i+1}}} \Big(\wt(Q)-\wt(\phi_{s_i}^{s_i}Q)\Big)+\sum_{\substack{Q\in\mathcal{Q}_{\mu}^{\lambda},\\\lambda_i \not\sim \mu_{i+1}}} \Big(\wt(Q)-\wt(\phi_{s_i}^{s_i}Q)\Big)\\
&=\sum_{\substack{Q\in\mathcal{Q}_{\mu}^{\lambda},\\\lambda_i \sim \mu_{i+1}}} \wt(Q)(1-q^{x-1}t^{f})\\
&=\sum_{Q\in\mathcal{Q}_{\mu}^{s_i\lambda}} \wt(Q) (1-t) \\ 
&=F_{\mu}^{s_i\lambda}-F_{s_i\mu}^{\lambda}.
\end{align*}
Here the equality between the second and third line follows from Items 5 and 2,
and the equality between the third and fourth line follows from Item 3.
The last one is a consequence of Item 1.
\end{proof}

A direct consequence of \cref{lem:equal} and \cref{lem:symm1} is:

\begin{lemma}\label{lem:symmetry}
If $\mu_i,\mu_{i+1}>0$ or $\mu_i=\mu_{i+1}$  then
\[
F_{\mu}^{\lambda} +F_{s_i\mu}^{\lambda} =F_{\mu}^{s_i\lambda} +F_{s_i\mu}^{s_i\lambda}. 
\]
\end{lemma}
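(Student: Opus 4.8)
The plan is to derive \cref{lem:symmetry} purely as a bookkeeping consequence of the case analysis already carried out in \cref{lem:equal} and \cref{lem:symm1}, so no new combinatorics on two-line queues is needed. First I would reduce to the situation $\mu_i \neq \mu_{i+1}$, since if $\mu_i = \mu_{i+1}$ then \cref{lem:equal} gives $F_{\mu}^{\lambda} = F_{s_i\mu}^{\lambda} = F_{\mu}^{s_i\lambda} = F_{s_i\mu}^{s_i\lambda}$ outright, and the claimed identity is immediate. Next, by the symmetry in the roles of $\mu$ and $s_i\mu$ on the left versus the right of the asserted equation, I may assume without loss of generality that $\mu_i > \mu_{i+1}$; the hypothesis then reads $\mu_i > \mu_{i+1} > 0$, which is exactly the standing assumption of \cref{lem:symm1}.

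With $\mu_i > \mu_{i+1} > 0$ fixed, I would next dispose of the case $\lambda_i = \lambda_{i+1}$ using the second display of \cref{lem:equal}, which yields $F_{\mu}^{\lambda} = F_{\mu}^{s_i\lambda}$ and $F_{s_i\mu}^{\lambda} = F_{s_i\mu}^{s_i\lambda}$, so again both sides of \cref{lem:symmetry} agree term by term. (One should also note the trivial reduction: if $\lambda_i < \lambda_{i+1}$ one simply replaces $\lambda$ by $s_i\lambda$, since the statement is symmetric under $\lambda \leftrightarrow s_i\lambda$; so we may take $\lambda_i \geq \lambda_{i+1}$.) This leaves the case $\mu_i > \mu_{i+1} > 0$ and $\lambda_i > \lambda_{i+1} \geq 0$, which is precisely where \cref{lem:symm1} applies, and the three subcases there are exhaustive according to whether $\mu_{i+1} > \lambda_i$, $\mu_{i+1} = \lambda_i$, or $\mu_{i+1} < \lambda_i$.

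In the first subcase, \cref{lem:symm1}(\ref{symm1a}) gives $tF_{\mu}^{\lambda} = F_{s_i\mu}^{\lambda} = tF_{\mu}^{s_i\lambda} = F_{s_i\mu}^{s_i\lambda}$; adding, $F_{\mu}^{\lambda} + F_{s_i\mu}^{\lambda} = F_{\mu}^{\lambda}(1+t)$ and $F_{\mu}^{s_i\lambda} + F_{s_i\mu}^{s_i\lambda} = F_{\mu}^{s_i\lambda}(1+t)$, and these are equal since $F_{\mu}^{\lambda} = F_{\mu}^{s_i\lambda}$. In the second subcase, $\mu_{i+1} = \lambda_i$, \cref{lem:symm1} gives the desired identity verbatim. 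In the third subcase, $\mu_{i+1} < \lambda_i$, \cref{lem:symm1} gives $F_{s_i\mu}^{\lambda} = F_{\mu}^{s_i\lambda} = 0$ and $F_{\mu}^{\lambda} = F_{s_i\mu}^{s_i\lambda}$, so $F_{\mu}^{\lambda} + F_{s_i\mu}^{\lambda} = F_{\mu}^{\lambda} + 0 = F_{s_i\mu}^{s_i\lambda} = 0 + F_{s_i\mu}^{s_i\lambda} = F_{\mu}^{s_i\lambda} + F_{s_i\mu}^{s_i\lambda}$. Since the subcases are exhaustive, this completes the proof. There is no real obstacle here — the entire content was already extracted in the preceding two lemmas, and \cref{lem:symmetry} is just the statement that in every case the \emph{symmetrized} quantity $F_{\mu}^{\lambda} + F_{s_i\mu}^{\lambda}$ is invariant under $\lambda \mapsto s_i\lambda$; the only mild care needed is in organizing the reductions ($\mu_i = \mu_{i+1}$, then the symmetry normalizations $\mu_i > \mu_{i+1}$ and $\lambda_i \geq \lambda_{i+1}$, then $\lambda_i = \lambda_{i+1}$) so that the hypotheses of \cref{lem:symm1} are genuinely met in the remaining case.
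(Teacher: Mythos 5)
Your proposal is correct and is exactly the case-checking that the paper leaves implicit when it declares \cref{lem:symmetry} ``a direct consequence of \cref{lem:equal} and \cref{lem:symm1}'': the reductions via the $\mu\leftrightarrow s_i\mu$ and $\lambda\leftrightarrow s_i\lambda$ symmetries, followed by the three subcases of \cref{lem:symm1}, are precisely what is needed. No issues.
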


Now we consider the case that $\mu_i > \mu_{i+1} = 0$.  Without loss of generality 
we assume $\lambda_i \geq \lambda_{i+1}$.

\begin{lemma}\label{morecases}
Suppose that 
 $\mu_i>\mu_{i+1}=0$ and $\lambda_i \geq \lambda_{i+1}$.
Then we have the following:
\begin{enumerate}
\item \label{item1} If $\lambda_i=\lambda_{i+1}$ or $\mu_i> \lambda_i,\lambda_{i+1}$
then
\[
x_{i+1}F_{\mu}^{\lambda} =x_iF_{s_i\mu}^{\lambda} =x_{i+1}F_{\mu}^{s_i\lambda} =x_iF_{s_i\mu}^{s_i\lambda} .
\]
In particular, 
both $F_{\mu}^{\lambda} +F_{s_i \mu}^{\lambda} $
and  
$F_{\mu}^{s_i \lambda} +F_{s_i \mu}^{s_i \lambda} $
are symmetric in $x_i$ and $x_{i+1}$.
\item  \label{item2}
If $\mu_{i}=\lambda_i>\lambda_{i+1}$ then
\reqnomode
		\begin{equation}\label{item2-1}
			tx_{i+1}F_{\mu}^{\lambda} +x_iF_{s_i\mu}^{\lambda} =
tx_{i+1}F_{\mu}^{s_i\lambda} +x_iF_{s_i\mu}^{s_i\lambda} .
		\end{equation}
		We also have that 
		$x_{i+1} 
		F_{\mu}^{\lambda} =x_iF_{s_i\mu}^{s_i\lambda} ,$ and 
		\begin{equation} \label{item2-2}
		tx_{i+1}F_{\mu}^{s_i\lambda} +(1-t)x_{i+1}F_{\mu}^{\lambda}=x_iF_{s_i\mu}^{\lambda} . \ \ 
		\end{equation}
\item \label{item3} If $\lambda_i>\mu_i \geq \lambda_{i+1}$ then 
$$
x_i F_{s_i\mu}^{\lambda} = t x_{i+1} 
F_{\mu}^{s_i\lambda} ; \ \ F_{\mu}^{\lambda} =F_{s_i\mu}^{s_i\lambda} =0.
$$
\item \label{item4} If $\lambda_i > \lambda_{i+1} > \mu_i$ then 
$$F_{\mu}^{\lambda} =F_{s_i\mu}^{s_i\lambda} = F_{\mu}^{s_i\lambda} =F_{s_i\mu}^{\lambda} =0.$$
\end{enumerate}
\end{lemma}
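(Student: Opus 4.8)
\textbf{Proof plan for \cref{morecases}.}
The plan is to analyze the five cases by examining what the rightmost column of a generalized two-line queue $Q_0 \in \mathcal{Q}_{\mu}^{\lambda}$ can look like when $\mu_{i+1}=0$. By the circular symmetry \cref{lem:circ} (or more precisely its proof via \cref{circular}), I may assume $i$ and $i+1$ are the last two indices, so only the last two columns are affected by $s_i$. Since $\mu_{i+1}=0$, the last column of $Q_0$ is a single ball $\lambda_n$ (if $\lambda_n>0$) or empty (if $\lambda_n=0$); after applying $\phi$-type maps the ball $\mu_i$ and its partner move around. The $\mathbf x$-weight bookkeeping is governed entirely by $\prod_{\mu_j>0}x_j$, which explains why factors of $x_i$ vs.\ $x_{i+1}$ appear: moving the ``$0$'' from position $i+1$ to position $i$ swaps an $x_i$ for an $x_{i+1}$.

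First I would dispose of Cases \eqref{item4} and \eqref{item3}: if $\lambda_i>\lambda_{i+1}>\mu_i$ then no valid queue has $\mu_i$ (or, in $s_i\mu$, the $0$) beneath the larger top entries, forcing all four quantities to vanish; similarly in Case \eqref{item3} the conditions $\lambda_i>\mu_i$ and $\mu_{i+1}=0$ kill $F_{\mu}^{\lambda}$ and $F_{s_i\mu}^{s_i\lambda}$, and the bijection $\phi^{s_i}$ (composed with $\phi_{s_i}$) between $\mathcal{Q}_{s_i\mu}^{\lambda}$ and $\mathcal{Q}_{\mu}^{s_i\lambda}$ introduces exactly one skip of the ball $\mu_i$ over the hole, contributing the factor $t$, while the $x$-weight contributes the $x_i$ versus $x_{i+1}$ discrepancy; this gives $x_i F_{s_i\mu}^{\lambda} = t x_{i+1} F_{\mu}^{s_i\lambda}$. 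Case \eqref{item1} is the ``generic'' case: when $\lambda_i=\lambda_{i+1}$, or when $\mu_i$ strictly exceeds both $\lambda_i$ and $\lambda_{i+1}$, the maps $\phi_{s_i}$, $\phi^{s_i}$, $\phi_{s_i}^{s_i}$ are all weight-preserving bijections up to the $x_i \leftrightarrow x_{i+1}$ swap, so the four products $x_{i+1}F_{\mu}^{\lambda}$, $x_iF_{s_i\mu}^{\lambda}$, $x_{i+1}F_{\mu}^{s_i\lambda}$, $x_iF_{s_i\mu}^{s_i\lambda}$ all coincide; the symmetry statement follows immediately.

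Case \eqref{item2}, with $\mu_i=\lambda_i>\lambda_{i+1}$, is the main obstacle and requires the same style of argument as Case (2) of \cref{lem:symm1}. Here I would split $\mathcal{Q}_{\mu}^{\lambda}$ according to whether the ball $\lambda_i$ pairs with the ball $\mu_i$ directly below it (call this $\lambda_i \sim \mu_i$) or not. The key weight identities are: (a) $\phi^{s_i}$ is a bijection from $\{Q : \lambda_i \sim \mu_i\}$ onto $\mathcal{Q}_{\mu}^{s_i\lambda}$, losing the weight of the trivial/near-trivial pairing $\lambda_i \sim \mu_i$, which equals $\frac{1-t}{1-q^{\lambda_i-1}t^f}$ where $f$ is the number of free balls before pairing $\lambda_i$; (b) when $\lambda_i \not\sim \mu_i$, the map $\phi_{s_i}^{s_i}$ is weight-preserving up to the $x$-swap; (c) when $\lambda_i \sim \mu_i$, the map $\phi_{s_i}^{s_i}$ makes that pairing wrap and skip all free balls, contributing an extra $q^{\lambda_i-1}t^f$. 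Assembling these as in the displayed computation at the end of \cref{lem:symm1} yields
\[
x_{i+1}\bigl(F_{\mu}^{\lambda} - \tfrac{x_i}{x_{i+1}}F_{s_i\mu}^{s_i\lambda}\bigr) = x_{i+1}F_{\mu}^{s_i\lambda} - \tfrac{x_i}{x_{i+1}}\cdot\tfrac{1}{t}\cdot x_i F_{s_i\mu}^{\lambda}\cdot(\cdots),
\]
which after clearing denominators and using $x_{i+1}F_{\mu}^{\lambda}=x_iF_{s_i\mu}^{s_i\lambda}$ (itself proved as in (b)--(c)) collapses to \eqref{item2-1}; the relation \eqref{item2-2} then follows by combining \eqref{item2-1} with the identity $x_{i+1}F_{\mu}^{\lambda}=x_iF_{s_i\mu}^{s_i\lambda}$. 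The bulk of the work is keeping careful track of which pairing acquires a skip-factor $t$, a wrap-factor $q^{\lambda_i-1}$, and the $\frac{1}{1-q^{\lambda_i-1}t^f}$ denominator under each of the four $\phi$-maps; once those are tabulated the algebra is routine.
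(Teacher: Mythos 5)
Your overall outline matches the paper's: items \eqref{item1}, \eqref{item3}, \eqref{item4} and the identity $x_{i+1}F_{\mu}^{\lambda}=x_iF_{s_i\mu}^{s_i\lambda}$ are read off from the definitions, \eqref{item2-1} is proved by mimicking Case (2) of \cref{lem:symm1}, and \eqref{item2-2} is deduced from \eqref{item2-1} together with $x_{i+1}F_{\mu}^{\lambda}=x_iF_{s_i\mu}^{s_i\lambda}$. That much is fine, and your treatment of Cases \eqref{item1}, \eqref{item3}, \eqref{item4} is correct (one small wording issue in Case \eqref{item3}: the extra factor $t$ comes from the strand out of the top ball $\lambda_i$ skipping the \emph{free bottom ball} $\mu_i$ at position $i+1$; holes are never ``skipped'' and contribute no $t$).

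However, your elaboration of Case \eqref{item2} has a genuine error: you propose to split $\mathcal{Q}_{\mu}^{\lambda}$ according to whether the top ball $\lambda_i$ is paired with the bottom ball $\mu_i$ directly below it, but when $\mu_i=\lambda_i$ and these two balls sit in the same column they are \emph{always} trivially paired (this is forced by the matching algorithm, cf.\ the remark after the definition of multiline queues), so your case $\lambda_i\not\sim\mu_i$ is empty and your step (b) is vacuous. The geometry here is permuted relative to Case (2) of \cref{lem:symm1}: there the vertically aligned (forced, weight-$1$, trivial) pairs occur in $\mathcal{Q}_{\mu}^{s_i\lambda}$ and $\mathcal{Q}_{s_i\mu}^{\lambda}$ while the dichotomy is applied to the diagonal configuration $\mathcal{Q}_{\mu}^{\lambda}$; in Case \eqref{item2} of \cref{morecases} it is $\mathcal{Q}_{\mu}^{\lambda}$ and $\mathcal{Q}_{s_i\mu}^{s_i\lambda}$ that carry the forced trivial pairing, and the dichotomy (does the label-$\lambda_i$ top ball pair with the off-diagonal bottom ball $\mu_i$ or not?) must be applied to $\mathcal{Q}_{s_i\mu}^{\lambda}$ and $\mathcal{Q}_{\mu}^{s_i\lambda}$. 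Consequently your claim (a) is also wrong on two counts: the pairing $\lambda_i\sim\mu_i$ in $\mathcal{Q}_{\mu}^{\lambda}$ is trivial and has weight $1$, not $\frac{1-t}{1-q^{\lambda_i-1}t^{f}}$ (it is the adjacent non-wrapping pairing in $\mathcal{Q}_{s_i\mu}^{\lambda}$ that carries that weight, and the wrapping pairing in $\mathcal{Q}_{\mu}^{s_i\lambda}$ that carries $\frac{(1-t)t^{f-1}}{1-q^{\lambda_i-1}t^{f}}q^{\lambda_i-1}$); and $\phi^{s_i}$ restricted to $\mathcal{Q}_{\mu}^{\lambda}$ is not onto $\mathcal{Q}_{\mu}^{s_i\lambda}$ when $\mu$ has other parts equal to $\lambda_i$, since the top ball at column $i+1$ need not pair with the bottom ball at column $i$. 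Indeed, if your (a) and (c) held as stated they would contradict the (true) identity $x_{i+1}F_{\mu}^{\lambda}=x_iF_{s_i\mu}^{s_i\lambda}$. The fix is to run the \cref{lem:symm1} argument with the roles of the four sets reassigned as above, which yields $x_iF_{s_i\mu}^{\lambda}-tx_{i+1}F_{\mu}^{s_i\lambda}=(1-t)x_{i+1}F_{\mu}^{\lambda}$, i.e.\ \eqref{item2-2}, from which \eqref{item2-1} follows.
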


\begin{proof}
\cref{item1}, \cref{item3}, and \cref{item4} follow easily from the definitions, as does the statement
$	x_{i+1} 
		F_{\mu}^{\lambda} =x_iF_{s_i\mu}^{s_i\lambda} $ from \cref{item2}.
		The proof of \eqref{item2-1} is completely analogous to the proof of 
	Case (2) of \cref{lem:symm1}.  Meanwhile
		 \eqref{item2-2} follows from \eqref{item2-1} together with the fact that 
		$x_{i+1} 
		F_{\mu}^{\lambda} =x_iF_{s_i\mu}^{s_i\lambda} .$ 

\end{proof}

The following lemma is a direct consequence of \cref{morecases}.
\begin{lemma}
Suppose that 
 $\mu_i>\mu_{i+1}=0$.
	Then we have \begin{equation}\label{tidentity}
	tx_{i+1}F_{\mu}^{\lambda} +x_iF_{s_i\mu}^{\lambda} =
tx_{i+1}F_{\mu}^{s_i\lambda} +x_iF_{s_i\mu}^{s_i\lambda} .
	\end{equation}
\end{lemma}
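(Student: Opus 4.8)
The plan is to derive \eqref{tidentity} directly from the case analysis in \cref{morecases}, handling each of the four cases separately and checking that the claimed identity holds in each. Since the hypotheses of \cref{morecases} ($\mu_i > \mu_{i+1} = 0$ and $\lambda_i \geq \lambda_{i+1}$) are exactly the hypotheses here, every generalized two-line queue contributing to the four quantities $F_\mu^\lambda$, $F_{s_i\mu}^\lambda$, $F_\mu^{s_i\lambda}$, $F_{s_i\mu}^{s_i\lambda}$ falls under one of \cref{item1}--\cref{item4} (note that when $\lambda_i = \lambda_{i+1}$ we are in \cref{item1}, and when $\lambda_i > \lambda_{i+1}$ the three subcases $\mu_i > \lambda_i$, $\mu_i = \lambda_i$, $\lambda_i > \mu_i \geq \lambda_{i+1}$, and $\lambda_{i+1} > \mu_i$ are covered by \cref{item1}, \cref{item2}, \cref{item3}, and \cref{item4} respectively).

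First I would dispose of the easy cases. In \cref{item1} we have $x_{i+1}F_\mu^\lambda = x_i F_{s_i\mu}^\lambda = x_{i+1}F_\mu^{s_i\lambda} = x_i F_{s_i\mu}^{s_i\lambda}$; multiplying the first of these by $t$ and rearranging immediately gives $tx_{i+1}F_\mu^\lambda + x_i F_{s_i\mu}^\lambda = tx_{i+1}F_\mu^{s_i\lambda} + x_i F_{s_i\mu}^{s_i\lambda}$, which is \eqref{tidentity}. In \cref{item4}, all four quantities vanish, so both sides of \eqref{tidentity} are zero. In \cref{item3}, we have $F_\mu^\lambda = F_{s_i\mu}^{s_i\lambda} = 0$, so \eqref{tidentity} reduces to $x_i F_{s_i\mu}^\lambda = tx_{i+1}F_\mu^{s_i\lambda}$, which is precisely the first displayed identity of \cref{item3}. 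Finally, in \cref{item2}, \eqref{tidentity} is exactly \eqref{item2-1} of \cref{morecases}, so there is nothing to prove.

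The step I expect to require the most care is not any single computation but rather the bookkeeping: making sure the four subcases genuinely exhaust all possibilities under the standing hypotheses, and that the inequalities delimiting them ($\lambda_i = \lambda_{i+1}$ versus the ordering of $\mu_i$ relative to $\lambda_i$ and $\lambda_{i+1}$) match up exactly with the hypotheses of \cref{item1}--\cref{item4}. One subtlety to flag explicitly is the overlap: the hypothesis ``$\lambda_i = \lambda_{i+1}$ or $\mu_i > \lambda_i, \lambda_{i+1}$'' of \cref{item1} and the hypothesis ``$\mu_i = \lambda_i > \lambda_{i+1}$'' of \cref{item2} are not mutually exclusive with a naive reading when $\mu_i > \lambda_i = \lambda_{i+1}$, but in that situation we are squarely in \cref{item1} and \eqref{tidentity} follows as above, so no inconsistency arises. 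Once the case division is pinned down, the lemma follows by assembling the four observations above, and I would simply write: ``This is an immediate consequence of the four cases of \cref{morecases}: in Cases \ref{item1}, \ref{item3}, and \ref{item4} the identity \eqref{tidentity} follows from the displayed equalities as explained above, and in Case \ref{item2} it is precisely \eqref{item2-1}.''
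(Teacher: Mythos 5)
Your proof is correct and is exactly the argument the paper intends: the paper states the lemma as ``a direct consequence of \cref{morecases},'' and your case-by-case verification (Case \ref{item1} by multiplying the common value by $t$ and adding, Case \ref{item2} being literally \eqref{item2-1}, Case \ref{item3} reducing to $x_iF_{s_i\mu}^{\lambda}=tx_{i+1}F_{\mu}^{s_i\lambda}$, Case \ref{item4} being $0=0$) is precisely that deduction, with the exhaustiveness of the case split correctly checked.
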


In \cref{prop:3.14} through \cref{prop:3.18} below, we will prove \eqref{eq1} and \eqref{eq2} together, by induction
on the number of rows $L$ in the diagrams (equivalently, on the value $L$ of the largest part 
in the composition $\mu$).  The base case $L=1$ is covered by 
\cref{lem:base}.  For fixed $L \geq 2$, we will be assuming that all cases of 
\eqref{eq1} and \eqref{eq2} are true for diagrams with at most $L-1$ rows.

\begin{proposition}\label{prop:3.14} Let $L>1$.
Suppose that \eqref{eq1} holds for compositions with maximal part at most $L-1$. Then
	\eqref{eq1} holds for compositions $\mu$ with maximal part $L$ and such that $\mu_i=\mu_{i+1}$;
	in other words,
	$F_{\mu} $ is symmetric in $x_i$ and $x_{i+1}$.
\end{proposition}
\begin{proof}
We compute
\begin{eqnarray*}
2F_{\mu} &=&\sum_{\lambda} \big( F_{\mu}^{\lambda} F_{\lambda^-} + F_{\mu}^{s_i\lambda} F_{s_i\lambda^-}\big) \\
&=&\sum_{\lambda} F_{\mu}^{\lambda}  (F_{\lambda^-} +F_{s_i\lambda^-} ).
\end{eqnarray*}
The first equality comes from \cref{lem:recursive}, and the second comes from \cref{lem:equal}, which says that 
$F_{\mu}^{\lambda} =F_{\mu}^{s_i\lambda} $ when $\mu_i=\mu_{i+1}$. 

But now we have that 
$(F_{\lambda^-} +F_{s_i\lambda^-} )$ is symmetric in $x_i$ and $x_{i+1}$ by induction, and 
$F_{\mu}^{\lambda} $ is symmetric in $x_i$ and $x_{i+1}$ by definition (since $\mu_i=\mu_{i+1}$, the variables $x_i$ and $x_{i+1}$ appear in the
$\mathbf x$-weight 
as either $1$ or $x_{i}x_{i+1}$, depending on whether $\mu_i=0$ or not, and only $\mu$ contributes to the $\mathbf x$-weight of $F_{\mu}^{\lambda}$).
This implies that $F_{\mu} $ is symmetric in $x_i$ and $x_{i+1}$.
\end{proof}

\begin{proposition}\label{prop:3.15}
Suppose that \eqref{eq1} holds for compositions with maximal part at most $L-1$.
	Then
	\eqref{eq1} holds for compositions $\mu$ with maximal part $L$ and such that $\mu_i>\mu_{i+1}>0$.
\end{proposition}
\begin{proof} We have that 
\begin{eqnarray*}
2(F_{\mu} +F_{s_i\mu} )
	&=&\sum_\lambda \Big( (F_{\mu}^{\lambda} +F_{s_i\mu}^{\lambda} )F_{\lambda^-} + (F_{\mu}^{s_i\lambda} +F_{s_i\mu}^{s_i\lambda} )F_{s_i\lambda^-} \Big)\\
	&=&\sum_\lambda (F_{\mu}^{\lambda} +F_{s_i\mu}^{\lambda} )(F_{\lambda^-} + F_{s_i\lambda^-} )\\
	&=&\sum_\lambda (F_{\mu}^{\lambda} +F_{s_i\mu}^{\lambda} )s_i(F_{\lambda^-} + F_{s_i\lambda^-} )\\
	&=&\sum_\lambda s_i(F_{\mu}^{\lambda} +F_{s_i\mu}^{\lambda} )s_i(F_{\lambda^-} + F_{s_i\lambda^-} )\\
	&=&s_i\sum_\lambda (F_{\mu}^{\lambda} +F_{s_i\mu}^{\lambda} )(F_{\lambda^-} + F_{s_i\lambda^-} )\\
	&=&s_i\sum_\lambda \Big((F_{\mu}^{\lambda}  + F_{s_i\mu}^{\lambda} )F_{\lambda^-} + (F_{\mu}^{s_i\lambda}  +
	F_{s_i\mu}^{s_i\lambda} )F_{s_i\lambda^-} \Big)\\
&=& 2 s_i (F_{\mu} +F_{s_i\mu} ).
\end{eqnarray*}
	The first equality comes from \cref{lem:recursive}. The second is due to \cref{lem:symmetry}. The third uses the induction step. The fourth one uses the (trivial) fact that $s_i (F_{\mu}^{\lambda} ) = F_{\mu}^{\lambda} $
	whenever $\mu_i$ and $\mu_{i+1}$ are both nonzero.
\end{proof}

\begin{proposition}
\label{prop:3.16}
Suppose that \eqref{eq1} and \eqref{eq2} hold 
for compositions with maximal part at most $L-1$.
Then
\eqref{eq1} holds for compositions $\mu$ with maximal part $L$ and such that $\mu_i>\mu_{i+1}=0$.
\end{proposition}
\begin{proof}
	We have that 
\begin{eqnarray*}
F_{\mu} +F_{s_i\mu} 
	&=&\sum_{\lambda_i>\lambda_{i+1}} \left( (F_{\mu}^{\lambda} +F_{s_i\mu}^{\lambda} )F_{\lambda^-} + 
(F_{\mu}^{s_i\lambda} +
	F_{s_i\mu}^{s_i\lambda} )F_{s_i\lambda^-} \right) \\
 &&+\sum_{\lambda_i=\lambda_{i+1}} (F_{\mu}^{\lambda} +F_{s_i\mu}^{\lambda} )F_{\lambda^-}.\\
\end{eqnarray*}
By \cref{item1} of \cref{morecases} and the induction hypothesis, the term on the right-hand side 
where $\lambda_i = \lambda_{i+1}$ is symmetric in $x_i$ and $x_{i+1}$.
We need to show that the same is true for the rest of the right-hand side.

Using \cref{morecases}, we have 
that 
\[
	\sum_{\lambda_i>\lambda_{i+1}}\left( (F_{\mu}^{\lambda} +F_{s_i\mu}^{\lambda} )F_{\lambda^-} + (F_{\mu}^{s_i\lambda} +
	F_{s_i\mu}^{s_i\lambda} )F_{s_i\lambda^-} \right)
\] 
is equal to 

\begin{align}
	& \sum_{\mu_i > \lambda_i>\lambda_{i+1}} \label{first}
(F_{\mu}^{\lambda} + F_{s_i \mu}^{\lambda})(F_{\lambda^-} +F_{s_i\lambda^-} )\\
	&+ \sum_{\mu_i=\lambda_{i}>\lambda_{i+1}} \label{second}
\Big(
\big( F_\mu^\lambda  F_{\lambda^-} +F_{s_i\mu}^{s_i\lambda} F_{s_i\lambda^-} \big)+
\big( 
	F_{\mu}^{s_i\lambda} F_{s_i\lambda^-} 
	+
	F_{s_i\mu}^{\lambda} F_{\lambda^-} 
	\big)
	\Big)\\
&+ \sum_{\lambda_{i}>\mu_i\ge \lambda_{i+1}}
	\big( F_{s_i \mu}^{\lambda} F_{\lambda^-} +F_{\mu}^{s_i \lambda}F_{s_i\lambda^-} \big) . \label{third}
\end{align}
	By induction and \cref{item1} of \cref{morecases}, \eqref{first}
	 is symmetric in $x_i$ and $x_{i+1}$. 
	Meanwhile \eqref{third} is equal to 
$$ \sum_{\lambda_{i}>\mu_i\ge \lambda_{i+1}}
\frac{F_{s_i \mu}^{\lambda}}{t x_{i+1}} (tx_{i+1} F_{\lambda^-} +x_i F_{s_i\lambda^-} ),$$
	which by induction is also symmetric in $x_i$ and $x_{i+1}$.

	Finally we use \cref{item2} of \cref{morecases} to rewrite \eqref{second}	
	as
	\begin{align*}
&\hspace{-0.2in}
		\sum_{\mu_i = \lambda_{i}>\lambda_{i+1}} \Big(F_{\mu}^{\lambda}  F_{\lambda^-} + 
		\frac{x_{i+1}}{x_i} F_{\mu}^{\lambda} F_{{s_i \lambda}^-} + 
		F_{\mu}^{s_i \lambda} F_{(s_i \lambda)^-} + \frac{t x_{i+1}}{x_i} F_{\mu}^{s_i \lambda} F_{\lambda^-} + 
		\frac{(1-t) x_{i+1}}{x_i} F_{\mu}^{\lambda} F_{\lambda^-}\Big) \\
		& =   \sum_{\mu_i=\lambda_{i}>\lambda_{i+1}}
		\frac{F_{\mu}^{s_i \lambda}}{x_i} \big(t x_{i+1} F_{\lambda^-} + x_i F_{(s_i \lambda)^-}\big)  
		 +   \sum_{\mu_i=\lambda_{i}>\lambda_{i+1}}
		\frac{F_{\mu}^{\lambda}}{x_i} (x_i + x_{i+1}) (F_{\lambda^-} + F_{(s_i \lambda)^-}) \\
		& -   \sum_{\mu_i=\lambda_{i}>\lambda_{i+1}}
		\frac{F_{\mu}^{\lambda}}{x_i} \big(t x_{i+1} F_{\lambda^-} + x_i F_{(s_i \lambda)^-}\big).
\end{align*}
By induction all parts are symmetric in $x_i$ and $x_{i+1}$.
\end{proof}

\begin{proposition}
\label{prop:3.17}
Suppose that \eqref{eq1} and \eqref{eq2} hold 
for compositions with maximal part at most $L-1$.
Then
\eqref{eq2} holds for compositions $\mu$ with maximal part $L$ and such that $\mu_i>\mu_{i+1}>0$.
\end{proposition}
\begin{proof}
	We need to show that $tx_{i+1}F_{\mu}+x_iF_{s_i\mu}$ is symmetric in $x_i$ and $x_{i+1}$.
Towards this end, we write 
	\begin{equation}\label{eq:bigsum}
tx_{i+1}F_{\mu}+x_iF_{s_i\mu} 
		= \sum_{\lambda_i=\lambda_{i+1}} (tx_{i+1}F_{\mu}^{\lambda}+x_iF_{s_i\mu}^{\lambda})F_{\lambda^-} + \sum_{\lambda_i\neq \lambda_{i+1}} (tx_{i+1}F_{\mu}^{\lambda}+x_iF_{s_i\mu}^{\lambda})F_{\lambda^-}. 
	\end{equation}
	In the first sum on the right-hand side of \eqref{eq:bigsum}, where $\lambda_i=\lambda_{i+1}$, we have
\[
(tx_{i+1}F_{\mu}^{\lambda}+x_iF_{s_i\mu}^{\lambda})F_{\lambda^-}= (tx_{i+1}F_{\mu}^{\lambda}+x_i(tF_{\mu}^{\lambda}))F_{\lambda^-} = t(x_i+x_{i+1})F_{\mu}^{\lambda}F_{\lambda^-}.
\]
	Note that $F_{\lambda^-}$ is symmetric in $x_i$ and $x_{i+1}$ by induction (\cref{eq1}), so every such term in the first sum of \eqref{eq:bigsum} is also symmetric in $x_i$ and $x_{i+1}$.

	We write the second sum on the right-hand side of \eqref{eq:bigsum} as
\begin{align}
&\hspace{-1in}\sum_{\lambda_i\neq \lambda_{i+1}} (tx_{i+1}F_{\mu}^{\lambda}+x_iF_{s_i\mu}^{\lambda})F_{\lambda^-}\nonumber\\
\qquad=& \sum_{\lambda_i> \lambda_{i+1}} \Big((tx_{i+1}F_{\mu}^{\lambda}+x_iF_{s_i\mu}^{\lambda})F_{\lambda^-}+(tx_{i+1}F_{\mu}^{s_i\lambda}+x_iF_{s_i\mu}^{s_i\lambda})F_{s_i\lambda^-}\Big)\nonumber\\ 
 =&  \sum_{\lambda_i>\mu_{i+1}\geq \lambda_{i+1}} \Big((tx_{i+1}F_{\mu}^{\lambda}+x_iF_{s_i\mu}^{\lambda})F_{\lambda^-}+(tx_{i+1}F_{\mu}^{s_i\lambda}+x_iF_{s_i\mu}^{s_i\lambda})F_{s_i\lambda^-}\Big)\label{eq:A1}\\
 & +  \sum_{\mu_{i+1}>\lambda_i> \lambda_{i+1}} \Big((tx_{i+1}F_{\mu}^{\lambda}+x_iF_{s_i\mu}^{\lambda})F_{\lambda^-}+(tx_{i+1}F_{\mu}^{s_i\lambda}+x_iF_{s_i\mu}^{s_i\lambda})F_{s_i\lambda^-}\Big)\label{eq:A2}\\
 & +  \sum_{\lambda_i=\mu_{i+1}> \lambda_{i+1}} \Big((tx_{i+1}F_{\mu}^{\lambda}+x_iF_{s_i\mu}^{\lambda})F_{\lambda^-}+(tx_{i+1}F_{\mu}^{s_i\lambda}+x_iF_{s_i\mu}^{s_i\lambda})F_{s_i\lambda^-}\Big).\label{eq:A3}
 \end{align}
               
Note that the sums in \eqref{eq:A1}, \eqref{eq:A2}, and \eqref{eq:A3} include all terms in the original sum due to item (4) of \cref{lem:symm1}. 

For the terms in the sum of \eqref{eq:A1}, when $\lambda_i>\mu_{i+1}\geq \lambda_{i+1}$ we have
 \begin{align*}
 (tx_{i+1}F_{\mu}^{\lambda}+x_iF_{s_i\mu}^{\lambda})F_{\lambda^-}+(tx_{i+1}F_{\mu}^{s_i\lambda}+x_iF_{s_i\mu}^{s_i\lambda})F_{s_i\lambda^-} &= tx_{i+1}F_{\mu}^{\lambda}F_{\lambda^-}+x_iF_{s_i\mu}^{s_i\lambda}F_{s_i\lambda^-}\\
   &=  F_{\mu}^{\lambda}(tx_{i+1}F_{\lambda^-}+x_iF_{s_i\lambda^-}),
   \end{align*}
which is symmetric in $x_i$ and $x_{i+1}$  by induction using \eqref{eq2}.

For the terms in the sum of \eqref{eq:A2}, when $\mu_{i+1}>\lambda_i> \lambda_{i+1}$ we have
\begin{align*}
&\hspace{-0.8in}(tx_{i+1}F_{\mu}^{\lambda}+x_iF_{s_i\mu}^{\lambda})F_{\lambda^-}+(tx_{i+1}F_{\mu}^{s_i\lambda}+x_iF_{s_i\mu}^{s_i\lambda})F_{s_i\lambda^-} \\
&=  (tx_{i+1}F_{\mu}^{\lambda}+x_i(tF_{\mu}^{\lambda}))F_{\lambda^-}+(tx_{i+1}F_{\mu}^{\lambda}+x_i(tF_{\mu}^{\lambda}))F_{s_i\lambda^-}\\
 &=  tF_{\mu}^{\lambda}(x_i+x_{i+1})(F_{\lambda^-}+F_{s_i\lambda^-}),
 \end{align*}
 which is symmetric in $x_i$ and $x_{i+1}$ by induction using \eqref{eq1}.
 
 Finally, for the terms in the sum of \eqref{eq:A3}, when $\lambda_i=\mu_{i+1}> \lambda_{i+1}$ we have
\begin{align*}
&\hspace{-0.1in}  (tx_{i+1}F_{\mu}^{\lambda}+x_iF_{s_i\mu}^{\lambda})F_{\lambda^-}+(tx_{i+1}F_{\mu}^{s_i\lambda}+x_iF_{s_i\mu}^{s_i\lambda})F_{s_i\lambda^-}\\ 
 &= (tx_{i+1}F_{\mu}^{\lambda}F_{\lambda^-}+x_iF_{s_i\mu}^{s_i\lambda}F_{s_i\lambda^-})+(tx_{i+1}F_{\mu}^{s_i\lambda}F_{s_i\lambda^-}+x_iF_{s_i\mu}^{\lambda}F_{\lambda^-})\\ 
 &= \left(tx_{i+1}F_{\mu}^{\lambda}F_{\lambda^-}+x_i\left(F_{\mu}^{\lambda}+F_{s_i\mu}^{\lambda}\left(1-\frac{1}{t}\right)\right)F_{s_i\lambda^-}\right)+\left(tx_{i+1}\left(\frac{1}{t}F_{s_i\mu}^{\lambda}\right)F_{s_i\lambda^-}+x_iF_{s_i\mu}^{\lambda}F_{\lambda^-}\right)\\ 
 &= F_{\mu}^{\lambda}(tx_{i+1}F_{\lambda^-}+x_iF_{s_i\lambda^-})+F_{s_i\mu}^{\lambda}(x_i+x_{i+1})(F_{\lambda^-}+F_{s_i\lambda^-})-\frac{1}{t} F_{s_i\mu}^{\lambda}( tx_{i+1}F_{\lambda^-}+x_iF_{s_i \lambda^-}),
\end{align*}
in which all terms are symmetric in $x_i $ and $x_{i+1}$ by induction using \eqref{eq1} and \eqref{eq2}.
\end{proof}


\begin{proposition}
	\label{prop:3.18}
Suppose that \eqref{eq1} and \eqref{eq2} hold 
for compositions with maximal part at most $L-1$.
Then
\eqref{eq2} holds for compositions $\mu$ with maximal part $L$ and such that $\mu_i>\mu_{i+1}=0$.
\end{proposition}
\begin{proof}
We need to show that $tx_{i+1}F_{\mu}+x_iF_{s_i\mu}$ is symmetric in $x_i$ and $x_{i+1}$.
We have that 
\begin{align*}
&\hspace{-0.4in}2(tx_{i+1}F_{\mu} +x_iF_{s_i\mu} )\\
	&=\sum_\lambda \Big( (tx_{i+1}F_{\mu}^{\lambda} +x_iF_{s_i\mu}^{\lambda} )F_{\lambda^-} + (tx_{i+1}F_{\mu}^{s_i\lambda} 
	+x_iF_{s_i\mu}^{s_i\lambda} )F_{s_i\lambda^-} \Big) \\
&=\sum_\lambda (tx_{i+1}F_{\mu}^{\lambda} +x_iF_{s_i\mu}^{\lambda} )
	(F_{\lambda^-} +
	F_{s_i\lambda^-} ),
\end{align*}
	where we used \eqref{tidentity} in the second equality  above.
	Since $F_{\mu}^{\lambda} $ is $x_i$ times a rational function in the variables other than $x_i,x_{i+1}$,
while  $F_{s_i \mu}^{\lambda} $ is $x_{i+1}$ times a rational function in the variables other than $x_i,x_{i+1}$,
	it follows immediately that $tx_{i+1}F_{\mu}^{\lambda}+x_i F_{s_i \mu}^{\lambda}$ is symmetric in $x_i$ and $x_{i+1}$.  Using this fact and the induction hypothesis (\cref{eq1}), the right-hand side above is symmetric in $x_i$ and $x_{i+1}$.
\end{proof}

In summary, we have proved 
\eqref{eq1} and \eqref{eq2} together by induction
on the number of rows $L$ in the diagrams (equivalently, on the value $L$ of the largest part 
in the composition $\mu$).  
\cref{prop:3.14}, \cref{prop:3.15}, and \cref{prop:3.16} proved \eqref{eq1},
while \cref{prop:3.17} and \cref{prop:3.18} proved \eqref{eq2}.  
This completes our proof of 
 \eqref{aa} and \eqref{bb}.

\section{Comparing our formula to other 
formulas for Macdonald polynomials}\label{sec:comparison}

In this paper we 
 used multiline queues to give a new combinatorial formula for the Macdonald polynomial $P_{\lambda}$ 
 and the nonsymmetric Macdonald polynomial
 $E_{\lambda}$ when $\lambda$ is a partition.  
 We note that these new combinatorial formulas are quite different from the combinatorial formulas
 given by Haglund-Haiman-Loehr \cite{HHL1, HHL2, HHL3}, or 
 Ram-Yip \cite{RamYip}, or Lenart \cite{Lenart}.

While it is not obvious combinatorially, 
we show algebraically in \cref{prop:PB} that the 
polynomials $F_{\mu}$ (for $\mu$ an arbitrary composition) are equal to certain \emph{permuted basement
Macdonald polynomials}.  
Permuted-basement Macdonald polynomials $E_{\alpha}^{\sigma}({\mathbf x}; q, t)$ were introduced in \cite{thesis} and further studied in \cite{A} as a generalization of nonsymmetric Macdonald polynomials (where $\sigma \in S_n$ and $\alpha$ is a composition with $n$ parts). They have the property that the nonsymmetric Macdonald polynomial $E_{\mu}$ is equal to $E_{\rev(\mu)}^{w_0}$, where $\rev(\mu)$ denotes the reverse composition $(\mu_n, \mu_{n-1},\dots,\mu_1)$ of $\mu=(\mu_1,\dots,\mu_n)$ and $w_0$ denotes the
longest permutation $(n,\ldots,2,1)$ (written in one-line notation).  See 
\cref{rem:pb} for the definition of permuted basement Macdonald polynomials.

\begin{prop}\label{prop:PB}
For $\mu=(\mu_1,\ldots,\mu_n)$, define $\inc(\mu)$ to be the sorting of the parts of $\mu$ in increasing order. Then
\[F_{\mu}=E_{\inc(\mu)}^{\sigma}
\]
where 
$\sigma$ is the permutation of longest length such that 
	$\mu_{\sigma(1)} \leq \mu_{\sigma(2)} \leq 
	\dots \leq \mu_{\sigma(n)}$.
\end{prop}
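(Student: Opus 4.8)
The plan is to pin down $F_\mu=E_{\inc(\mu)}^\sigma$ by showing that the family $\{F_\mu\}_{\mu\in S_n(\lambda)}$ simultaneously satisfies the defining recursive/exchange relations for permuted-basement Macdonald polynomials $E_\alpha^\sigma$. Recall from \cite{thesis, A} that the permuted-basement polynomials can be characterized (up to normalization) by how they transform under the Hecke generators $T_i$ as one modifies the basement permutation $\sigma$: namely, if $\sigma^{-1}(i) < \sigma^{-1}(i+1)$ (so that $s_i\sigma$ is longer than $\sigma$) then $T_i$ relates $E_\alpha^\sigma$ and $E_\alpha^{s_i\sigma}$ by an explicit affine-in-$T_i$ formula, with the two degenerate cases (where the corresponding parts of $\alpha$ read through $\sigma$ are equal, versus distinct) governing whether one gets $T_i E_\alpha^\sigma = t\,E_\alpha^\sigma$ or $T_i E_\alpha^\sigma = E_\alpha^{s_i\sigma}$; together with the statement that for $\sigma = w_0$ one recovers $E_{\rev(\alpha)}$, i.e. the ordinary nonsymmetric Macdonald polynomial, which is our anchor via \cref{monic}.

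First I would translate \cref{thm:123} — which we have already proved for the $F_\mu$ — into the language of basements. The key observation is that for a \emph{partition} $\lambda$, $F_\lambda = E_\lambda$ by \cref{monic}, and $E_\lambda = E_{\rev(\lambda)}^{w_0}$; so $F_\lambda$ is the $\sigma=w_0$ member of the basement family for the content $\inc(\lambda) = \rev(\lambda)$ (reading a partition backwards sorts it increasingly). Then I would show that applying the relations \eqref{a}–\eqref{b} (equivalently \eqref{aa}–\eqref{bb}) to pass from $F_\mu$ to $F_{s_i\mu}$ exactly mirrors the basement-permutation recursion that takes $E_{\inc(\mu)}^\sigma$ to $E_{\inc(\mu)}^{s_i\sigma}$: note $\inc(\mu) = \inc(s_i\mu)$, so the content is fixed along the whole orbit, and only the basement permutation $\sigma$ (with $\sigma\mu = \inc(\mu)$) changes, by a transposition, as $\mu$ changes by a transposition. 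Concretely, I would induct on the length of $\sigma$ (equivalently, on the number of inversions of $\mu$, or the distance from $\mu$ to $\rev(\inc(\mu))$): the base case is $\mu = \rev(\lambda)$ (a partition in reverse, $\sigma = w_0$, already established), and the inductive step uses \eqref{a} when the swapped parts $\mu_i > \mu_{i+1}$ are distinct and \eqref{b} when they are equal, matching the two cases in the defining recursion for $E^\sigma$. One must also check the normalization — that the coefficient of $\mathbf{x}^{\inc(\mu)}$ (or the appropriate leading monomial relative to the basement $\sigma$) in $F_\mu$ is $1$ — which follows from \cref{def:Fmu}/\cref{def:wt} just as in the proof of \cref{monic}, since the minimal-weight multiline queue contributes $\mathbf{x}^{\inc(\mu)}$ with coefficient $1$.

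The main obstacle is bookkeeping the precise conventions: \cite{thesis, A} use a particular normalization and orientation for $E_\alpha^\sigma$ (which side the basement sits on, whether one uses $T_i$ or $T_i^{-1}$, and whether $\alpha$ is sorted increasingly or decreasingly), and \cref{thm:123} is stated with the conventions of \cite{CGW-arxiv}; reconciling these — in particular verifying that ``$\sigma$ of longest length with $\mu_{\sigma(1)}\le\dots\le\mu_{\sigma(n)}$'' is the basement permutation appearing in \cite{A}, and that the relation \eqref{a}/\eqref{aa} is literally the Demazure–Lusztig recursion used to define $E^\sigma_\alpha$ from one basement to an adjacent one — will require care. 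I expect the actual algebra to be short once the dictionary is fixed: the exchange relations \eqref{a}–\eqref{b} are exactly the Hecke-algebra relations that the basement family satisfies, so the result reduces to the uniqueness statement (a family in the span of the $E_\mu$, $\mu\in S_n(\lambda)$, transforming correctly under all $T_i$ and with the right leading term is unique), which is already the content of \cref{lem:Macdonald}'s proof strategy applied at the finer, non-symmetrized level.
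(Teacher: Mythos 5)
Your proposal follows essentially the same route as the paper: anchor at the partition case via \cref{monic} (so that $F_{\lambda}=E_{\lambda}=E_{\inc(\lambda)}^{w_0}$), then observe that the Hecke recursion \cref{thm:123} for the $F_{\mu}$ matches, case by case ($\mu_i>\mu_{i+1}$ versus $\mu_i=\mu_{i+1}$), the basement-permutation recursion for $E_{\eta}^{\sigma}$ from \cite[(12)]{A}, and propagate through the $S_n$-orbit. One small correction: the base case of the induction is $\mu=\lambda$ itself (the decreasing arrangement, whose basement is $w_0$), not $\mu=\rev(\lambda)$ as written in your parenthetical — your earlier sentence identifying $F_{\lambda}$ with the $\sigma=w_0$ member of the family for content $\inc(\lambda)$ already has it right.
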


\begin{proof}
We prove this result by reverse induction on the length of $\sigma$,
with the case that $\mu$ is a partition and $\sigma = w_0$ being the base case.  For the 
base case, when $\mu$ is a partition, 
 \cref{prop:nonsymmetric} implies that 
 $F_{\mu} = E_{\mu}= E_{\inc(\mu)}^{w_0}$.

Suppose the proposition is true for $\mu$ and $\sigma$ when $\sigma$ has length at least $r+1$.
Consider $\mu$ and $\sigma$ such that $\sigma$ has length $r$.
Find adjacent positions $i, i+1$ such that $\mu_i<\mu_{i+1}$ and let $\mu' = s_i \mu = 
(\mu_1,\dots, \mu_{i-1}, \mu_{i+1}, \mu_i, \mu_{i+2},\dots, \mu_n)$.
Let $\sigma'$ be the permutation of longest length such that 
	$\mu'_{\sigma'(1)} \leq \mu'_{\sigma'(2)} \leq 
	\dots \leq \mu'_{\sigma'(n)}$.
	 Then  $\sigma' s_i = \sigma$, where $\sigma' s_i$ is the 
	 permutation obtained from $\sigma'$ by swapping the letters
	 $i$ and $i+1$.  Moreover
	 the length of $\sigma'$ is $r+1$ and $\inc(\mu') = \inc(\mu)$ so 
	by the induction hypothesis, 
$F_{\mu'}=E_{\inc(\mu)}^{\sigma'}.$
	By \cref{thm:123}, $F_{\mu} = T_i F_{\mu'}$.  To prove the result, 
	it suffices  to show that 
	$E^{\sigma}_{\inc(\mu)} = T_i E^{\sigma'}_{\inc(\mu)}$.

To prove this claim, we use the result 
from \cite[Proposition 15]{A} that 
when  $\eta$ is an anti-partition (i.e. its parts are in increasing order)
and the length of $\sigma s_i$ is less than the length of $\sigma$,
\[
T_i E_{\eta}^{\sigma}=\begin{cases}E_{\eta}^{\sigma s_i} & \eta_{\sigma^{-1}(i)}>\eta_{\sigma^{-1}(i+1)}\\
t E_{\eta}^{\sigma s_i} & \eta_{\sigma^{-1}(i)}\le \eta_{\sigma^{-1}(i+1)}.
\end{cases}
\]
(Note that $T_i$ is denoted by $\tilde\theta_i$ in \cite{A}.)
	Applying the result to analyze $T_i E_{\eta}^{\sigma'}$ with 
	$\eta=\inc(\mu)$, we have 
	$\eta_{\sigma'^{-1}(i)} > \eta_{\sigma'^{-1}(i+1)}$, so 
	 $T_i E_{\eta}^{\sigma'}=E_{\eta}^{\sigma' s_i}$.
	 Since $\sigma' s_i = \sigma$, this proves the claim.
\end{proof}

\begin{example}
Let $\mu = (2,3,1,2,2,1)$, so that $\inc(\mu) = (1,1,2,2,2,3)$ and $\sigma = (6,3,5,4,1,2)$.
To prove that $F_{\mu} = E_{(1,1,2,2,2,3)}^{(6,3,5,4,1,2)}$ we 
start with the base case 
$$F_{(3,2,2,2,1,1)} = 
E_{(1,1,2,2,2,3)}^{(6,5,4,3,2,1)}$$ 
and then 
apply operators $T_1$, $T_4$, then $T_3$.  
We inductively obtain 
$F_{(2,3,2,2,1,1)} = 
E_{(1,1,2,2,2,3)}^{(6,5,4,3,1,2)}$, then 
$F_{(2,3,2,1,2,1)} = 
E_{(1,1,2,2,2,3)}^{(6,4,5,3,1,2)}$, then 
$F_{(2,3,1,2,2,1)} = 
E_{(1,1,2,2,2,3)}^{(6,3,5,4,1,2)}$, as desired.
\end{example}

The permuted basement Macdonald polynomials can be described 
combinatorially using \emph{nonattacking fillings} of certain diagrams 
\cite{thesis, A} 
which we call \emph{permuted basement tableaux}
(the reference \cite{thesis} cites
personal communication with Haglund for their introduction).
Note that 
these permuted basement tableaux generalize the nonattacking fillings from 
\cite{HHL3}.
In light of this, one may wonder if there is a bijection between 
multiline queues and these permuted basement tableaux.  As we explain in \cref{rem:queue}, this is 
the case when the compositions have distinct parts.  
However, for general compositions, 
the number of permuted basement tableaux is  different than the number of multiline queues. There are more permuted basement tableaux (See Table \ref{permuted}). We conjecture that there is a way to group permuted basement tableaux so that the weight in a group equals the weight of one multiline queue, see 
\cref{fig:queueTableaux} for an example.

To illustrate that our formulas are reasonable in terms of the 
number of terms,
Table \ref{permuted} records the number 
 of permuted basement tableaux
 (respectively, multiline queues) 
 in the Haglund-Haiman-Loehr formula (respectively our formula) for nonsymmetric Macdonald polynomials $E_{\lambda}$, 
 where $\lambda$ is a partition.
Note that for any composition $\mu$ whose parts rearrange to form $\lambda$,
the number of multiline queues that contribute to $F_{\mu}$ equals the number of multiline queues contributing to  
$F_{\lambda}$; similarly for the number of permuted basement tableaux contributing to the formula for the corresponding 
permuted basement Macdonald polynomial.

 \begin{table}[h]
\begin{tabular}{|p{4cm}|p{6cm}|p{4cm}|}
\noalign{\smallskip}
\hline
\noalign{\smallskip}
$\lambda$ & \# permuted basement tableaux & \# multiline queues  \\
\noalign{\smallskip}
\hline
\noalign{\smallskip}
	$(2, 1, 1, 0, 0)$ & $3$ & $3$   \\
\noalign{\smallskip}
\hline
\noalign{\smallskip}
	$(2,2,1,1,0,0)$ & $9$ & $7$  \\
\noalign{\smallskip}
\hline
\noalign{\smallskip}
	$(2,2,2,1,1,0,0)$ & $27$ & $13$  \\
\noalign{\smallskip}
\hline
\noalign{\smallskip}
	$(2,2,2,2,1,1,0,0)$ & $81$ & $21$  \\
\noalign{\smallskip}
\hline
\noalign{\smallskip}
	$(3,2,2,1,1,0,0)$ & $135$ & $105$  \\
\noalign{\smallskip}
\hline
\noalign{\smallskip}
	$(3,3,2,2,1,1,0,0)$ & $2025$ & $1029$  \\
\noalign{\smallskip}
\hline
\noalign{\smallskip}
	$(3,3,3,2,2,1,1,0,0)$ & $30375$ & $6643$  \\
\noalign{\smallskip}
\hline
\noalign{\smallskip}
	$(3,3,3,3,2,2,1,1,0,0)$ & $455625$ & $30723$  \\
\noalign{\smallskip}
\hline
\noalign{\smallskip}
	$(4, 3,3,3,2,2,1,1,0,0)$ & $3189375$ & $697515$  \\
\noalign{\smallskip}
\hline
\end{tabular}
\bigskip
\customlabel{permuted}{1}
\caption{A comparison of the number of terms in the Haglund-Haiman-Loehr formula versus 
	 our formula for $E_{\lambda}$. The first formula uses nonattacking fillings (which are a special 
	 case of permuted basement tableaux) and the second uses multiline queues.}
\end{table}

%
%
%
%
%
%
%
%
%
%
%


\section{A tableau version of multiline queues}\label{sec:tableau}
In this section we introduce some new \emph{queue tableaux} which are in bijection with multiline queues.
These tableaux  are
similar to the permuted basement tableaux, though the definitions of
attacking boxes,  coinversions, major index, and arm are all slightly
different.

Let $\mu=(\mu_1,\dots,\mu_n)$ be a composition with  $\mu_i\in \{0,1,\dots, k\}$.
The \emph{diagram} $D=D_{\mu}$ associated to $\mu$ is a
 sequence of $n$ columns of boxes where the $i$th column contains  $\mu_i$ boxes (justified to the bottom).
 Meanwhile the \emph{augmented diagram} $\widetilde{D} = \widetilde{D}_{\mu}$ is $D_{\mu}$
 augmented
 by a \emph{basement} consisting of $n$ boxes in a row just below these
 columns, see
	\cref{fig:attacking}.
 We number the rows of $\widetilde{D}$ from bottom to
top (starting from the basement in row $0$) and the columns from left to right (starting from column $1$). Abusing notation slightly,
we often use $D$ or $\widetilde{D}$ to refer to the collection of boxes in $D$ or $\widetilde{D}$.
We use $(i,j)$ to refer to the box  in column $i$ and row $j$ (if $\mu_i<j$ that box is empty).
For a box $x$, we denote by $d(x)$ the box directly below it.

Note that we will always be working with a diagram associated to a partition $\lambda$.

\begin{defn}\label{def:attacking}
	Let $D_{\lambda} $ be the diagram of shape $\lambda$, 
	and let $(i,j)\in D_{\lambda}$. The boxes attacking $(i,j)$ in the augmented diagram are 
	 (see \cref{fig:attacking} (a)):
\begin{itemize}
\item[(i.)] $(i',j)\in D_{\lambda}$ where $i\neq i'$,
\item[(ii.)] $(i',j-1) \in \widetilde{D}_{\lambda}$ where $i'>i$,
\item[(iii.)] $(i',j-1)\in \widetilde{D}_{\lambda}$ where $i'<i$ such that $\lambda_i=\lambda_{i'}$.

\end{itemize}
\end{defn}
Note that our definition of attacking boxes differs from that in \cite{HHL3,A} due to the third condition.

\begin{defn}
	\label{def:longest}
Let $\lambda=(\lambda_1,\dots,\lambda_n)$ be a partition and $\sigma\in S_n$ a permutation. We say $\sigma$ is \emph{compatible with} $\lambda$ if whenever $\lambda_i=\lambda_{i+1}$, we have that
	 $\sigma_{n-i}>\sigma_{n-i+1}$.
	Given a partition $\lambda$ and a
permutation $\sigma\in S_n$ that is compatible with $\lambda$, we say that
an \emph{augmented filling} of shape $\lambda$ and \emph{basement} $\sigma$ is
a filling of the boxes of $\widetilde{D}_{\lambda}$ with integers in $[n]$, where
the basement is filled from right to left with $\sigma_1,\ldots,\sigma_n$. %
\end{defn}

We use the notation $\phi: \widetilde{D}_{\lambda}\to [n]$ to denote an augmented filling.
Given a filling $\phi$, we say that a box $x$ is \emph{restricted} if the labels of $x$ and $d(x)$ are equal, i.e. if $\phi(d(x))=\phi(x)$, and \emph{unrestricted} otherwise.

Note that this definition of an augmented filling is consistent with the skyline fillings used in \cite{HHL3}; it is equivalent to
the definition of the same object in \cite{A}, though \cite{A} uses  English (rather than French) notation for diagrams.


\begin{figure}[!ht]
  \centerline{\includegraphics[height=1in]{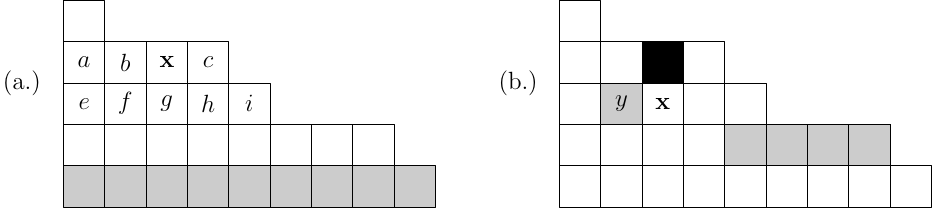}}
\centering
	\caption{(a) A tableau of shape $\lambda=(4,3,3,3,2,1,1,1,0)$ is shown, with the grey boxes representing the basement. The boxes attacking $x$ are: $a$, $b$, and $c$ (due to the first condition of \cref{def:attacking}), $h$ and $i$ (due to the second one), and $f$ (due to the third condition). The box $e$ is not attacking $x$, and $g=d(x)$. (b) The black box belongs to the leg and the grey boxes belong to the arm of $x$, with the box $y$ belonging to the arm provided that $y \neq d(y)$.}
\label{fig:attacking}
 \end{figure}

\begin{defn} \label{def:QT}
	Let $\lambda=(\lambda_1,\dots,\lambda_n)$ be a partition
	and let $\sigma=(\sigma_1,\dots, \sigma_n) \in S_n$ (written in one-line notation)
	be compatible with $\lambda$.
	 A \emph{queue tableau} of shape $\lambda$ with basement  $\sigma$ is an augmented filling $\phi:\widetilde{D}_{\lambda}\to [n]$ with basement $\sigma$ such that no two attacking boxes contain the same entry. Let $\QT_{\lambda}^{\sigma}$ denote the set of all queue tableaux of shape $\lambda$ with basement  $\sigma$.
\end{defn}

Note that due to the non-attacking condition, the entries in the basement must match the entries in row 1 directly above them, if they exist.

\begin{definition} \label{def:legarm}
Let $\lambda=(\lambda_1,\dots,\lambda_n)$ be a partition and let $\phi:\widetilde{D}_{\lambda}\to [n]$ be a queue tableau. 
Let $x=(i,j)$.

We define $\leg(x)=\lambda_i - j$ to be the number of boxes above $x$ in its column.

        The \emph{major index} is given by
        \[
        \maj(\phi)=\sum_{x\in D_{\lambda}\ :\ \phi(d(x))<\phi(x)} (\leg(x)+1).
        \]

        We define
        \begin{align*}
                \arm(x)&=\Big|\big\{(i',j-1) \in D_{\lambda}\ :\ i'>i,\ \lambda_{i'}<\lambda_i \big\} \Big| \\
                &+ \Big|\big\{(i',j) \in D_{\lambda}\ :\ i'<i,\ \lambda_{i'}=\lambda_i,\mbox{and}\ 
                (i',j) \text{ is unrestricted} \big\} \Big|
        \end{align*}
to be the number of boxes to the right of $x$ in the row below it, contained in columns shorter than its column, plus the number of unrestricted boxes to the left of and in the same row as $x$, contained in columns of the same length as $x$'s column.
\end{definition}

In \cref{fig:attacking} (b), the black box shows the leg of box $x$, while the grey boxes show the arm (assuming that none of the grey boxes to the left of $x$ are restricted).

	\begin{definition}\label{def:triple}
        A \emph{type $A$ quadruple} is a quadruple of boxes $\{x, d(x), y',y\}$ in $D_{\lambda}$ such that $y=d(y')$, the columns containing $x,y'$ are of the same length, and $x$ and $y'$ are in the same row. The two possible configurations for  type $A$ quadruples are shown in \cref{fig:inversion}.


A \emph{type $B$ triple} is a triple of boxes $\{x, d(x), y\}$ in $D_{\lambda}$ where $y$ is to the right of and in the same row as $d(x)$, and the column of $y$ is shorter than the column of $x$. See \cref{fig:inversion}.

		We say the triple or quadruple \emph{starts} at the cell $x$.

                A type $A$ quadruple is a \emph{coinversion} if all entries in its four cells are distinct, $\phi(x)>\phi(y')$, and either
        $\phi(x)<\phi(y)<\phi(d(x))$ or $\phi(y)<\phi(d(x))<\phi(x)$ or $\phi(d(x))<\phi(x)<\phi(y)$.   

        A type $B$ triple is a \emph{coinversion}
        if $\phi(y)<\phi(d(x))<\phi(x)$ or $\phi(d(x))<\phi(x)<\phi(y)$ or $\phi(x)<\phi(y)<\phi(d(x))$.

        We then define $\coinv(\phi)$ to be the number of coinversions
        coming from type $A$ quadruples and type $B$ triples, as shown in \cref{fig:inversion}.
\end{definition}

\begin{figure}[!ht]
  \centerline{\includegraphics[height=1.3in]{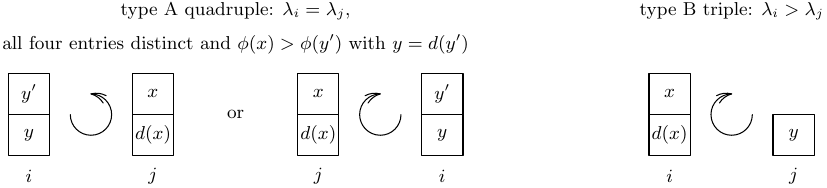}}
\centering
\caption{Quadruples and triple which are coinversions}
\label{fig:inversion}
 \end{figure}

\begin{defn}\label{def:weight}
        Let $\lambda=(\lambda_1,\dots,\lambda_n)$ be a partition and let $\phi:\widetilde{D}_{\lambda}\to [n]$ be a queue tableau. 
        The \emph{weight} of $\phi$ is
\begin{equation}\label{eq:weight}
        \wt(\phi)=q^{\maj(\phi)}t^{\coinv(\phi)}\prod_{x\in D_{\lambda}\ :\ \phi(d(x))\neq \phi(x)}
        \frac{1-t}{1-q^{\leg(x)+1}t^{\arm(x)+1}},
\end{equation}
We also define $x^\phi=\prod_{y\in D_{\lambda}}x_{\phi(y)}$ to be
the monomial in $x_1,\ldots,x_n$ where the power of $x_i$ is the number of boxes in $D_{\lambda}$ whose entry is $i$.
\end{defn}

The top line of \cref{fig:queueTableaux} shows the three queue tableaux of shape $\lambda=(2,2,1,0)$ with basement $(1,2,4,3)$, along with their weights.

\begin{remark}  \label{rem:pb}
	Let us compare our queue tableaux to the permuted basement
tableaux from \cite{A}.
To make the permuted basement tableaux from \cite{A} look more
like queue tableaux, we first reflect the tableaux from \cite{A}
from bottom to top, then rotate them $90^{\circ}$ counterclockwise.  Having done so, permuted basement tableaux which have shape $\alpha = (\alpha_1, \alpha_2, \dots, \alpha_n)$ (in the convention of \cite{A}) and basement $\sigma$ are the same as the queue tableaux from \cref{def:QT} of shape $\rev(\alpha) = (\alpha_n,\dots, \alpha_2, \alpha_1)$ and basement $\sigma$ except that the definition of attacking boxes for permuted basement tableaux only uses the first two conditions in \cref{def:attacking}.  \emph{All further definitions for permuted basement tableaux assume we have reflected and rotated the tableaux from \cite{A} as above.}

We again use coinversion triples to define \emph{coinversions}
for permuted basement tableaux.  Type $B$ coinversion triples are defined
 as in \cref{def:triple}.  However, for permuted basement tableaux,
a \emph{type $A'$ triple} is a triple of boxes $\{x, d(x),y\}$ in $D_{\lambda}$
where $y$ is to the left of and in the same row as $x$, and the column containing
$y$ is at most as long as the column containing $x$.  Such a triple is a \emph{type $A'$ coinversion triple} if $\phi(x) <\phi(y) < \phi(d(x))$ or $\phi(y)<\phi(d(x))<\phi(x)$ or $\phi(d(x))<\phi(x)<\phi(y)$.  We set $\coinv'(\phi)$
to be the total number of type $A'$ and $B$ coinversion triples.

The leg of a box is defined  as before, as is the major index $\maj(\phi)$.

Given a box $x$, we define $\arm'(x)$
to be the number of boxes in $D_{\lambda}$ to the right of $x$ in the row below it, contained in columns shorter than its column, plus the number of boxes to the left of and in the same row as $x$, contained in columns of length at most the length of $x$'s column.
If our shape is a partition,
the definition of $\arm'(x)$ agrees with the definition of $\arm(x)$ from \cref{def:legarm},
up to dropping the adjective ``unrestricted.'' If our shape is a partition
with distinct parts, the two definitions of arm agree.

Given all these definitions, the weight of a permuted basement tableau is
\begin{equation}\label{eq:weight2}
      \wt'(\phi)=q^{\maj(\phi)}t^{\coinv'(\phi)}\prod_{x\in D_{\lambda}\ :\ \phi(d(x))\neq \phi(x)}
        \frac{1-t}{1-q^{\leg(x)+1}t^{\arm'(x)+1}}.
\end{equation}
	(We note that there is a typo in \cite[(2)]{A}; the formula there has the product over boxes $u$ where $F(d(u))=F(u)$, but it should have $F(d(u))\neq F(u)$.)

Let $\mu=(\mu_1\dots,\mu_n)$ be a weak composition and $\sigma=(\sigma_1,\dots,\sigma_n)$ be a permutation.
Let $\PBT^{\sigma}_{\mu}$ denote the
 set of augmented fillings $\phi:\widetilde{D}_{\rev(\mu)}\to [n]$
 with basement $\sigma$
which are permuted basement tableaux.
Then the \emph{permuted basement Macdonald polynomial} is
	\begin{equation} \label{eq:pb}
E^{\sigma}_{\mu}(\x; q, t) = \sum_{\phi \in \PBT_{\mu}^{\sigma}} \wt'(\phi) x^{\phi}.
	\end{equation}
\end{remark}

\begin{remark}\label{rem:queue}
	Our queue tableaux are the same as permuted basement tableaux \cite{A, thesis}, and their weights agree,
	when $\lambda$ is a partition with distinct parts.
To see this, note that any non-attacking filling of a queue tableau is
automatically non-attacking as a filling of a permuted basement tableau. Moreover, when the parts of $\lambda$ are distinct, all non-attacking permuted basement fillings are also non-attacking according to \cref{def:attacking}, so the two sets of tableaux are equal.
Finally, note that when the parts of $\lambda$ are distinct, the definitions of arm agree on both sides; moreover,
there are no type $A$ quadruples or type $A'$ triples, so the coinversion statistics match
as well.
\end{remark}

        Recall  from \cref{def:Fmu} that
        $F_{\mu}$ is the generating function for multiline queues
        of type $\mu$.
\cref{prop:tableauxformula} below gives a tableau formula for
 $F_{\mu}$, and hence for the Macdonald polynomials
$P_{\lambda} = \sum_{\mu} F_{\mu}$, where the sum is over all distinct compositions $\mu$ obtained by permuting the parts of
$\lambda$.
This is the tableaux version of the
multiline queue formula from \cref{thm:main}.

\begin{thm}\label{prop:tableauxformula}
        Let $\mu=(\mu_1,\dots,\mu_n)$ be a weak composition, and
	let $\lambda:= \dec(\mu)$ be the partition obtained from $\mu$ by
	rearranging its parts in decreasing order.
        Choose $\sigma\in S_n$ to be the longest permutation 
        such that
        $\mu_{\sigma(1)} \leq \mu_{\sigma(2)} \leq
        \dots \leq \mu_{\sigma(n)}$ {(which implies that $\sigma$ is compatible with $\lambda$)}.
        We have that
\[
	F_{\mu} = \sum_{\phi\in\QT_{\dec(\mu)}^{\sigma}} \wt(\phi)x^\phi.
\]
\end{thm}

\begin{remark}
As mentioned earlier,
when $\lambda$ has distinct parts,  there are no type $A$ quadruples.
In this case the tableaux formula we obtain for Macdonald polynomials
(by combining
\cref{prop:tableauxformula} and
\cref{thm:main})
is essentially the one given by Lenart \cite{Lenart} (who gave a formula for $P_{\lambda}$ only in the case that $\lambda$ has distinct parts). To generalize that formula to arbitrary partitions, one needs the type $A$ quadruples.
\end{remark}

\begin{example}
Let us illustrate
\cref{prop:tableauxformula} for the case
$\mu = (0,1,2,2)$. Using the notation of that theorem,
we have $\lambda = (2,2,1,0)$ and $\sigma = (1,2,4,3)$,
so we can compute $F_{\mu}$ not only by summing over
the multiline queues of type $\mu$, but also by summing over the
queue tableaux in $\QT_{\lambda}^{\sigma}$, that is, the queue tableaux
of shape $(2,2,1,0)$ with basement $\sigma = (1,2,4,3)$
(read from right to left).
This is shown in the top line of
 \cref{fig:queueTableaux}.

Meanwhile, we know from \cref{prop:PB} that
$F_{\mu}=E_{(0,1,2,2)}^{\sigma}$.  So we can also compute
$F_{\mu}$ using \eqref{eq:pb} as the sum over permuted basement tableaux which
are augmented fillings of $\widetilde{D}_{(2,2,1,0)}$ with
basement $\sigma$.
This is shown in the second line of \cref{fig:queueTableaux}.

Note that the sum of the weights of the queue tableaux is the same as the sum of the weights of the permuted basement tableaux;   in particular, the sum of the weights of the third and fourth permuted basement tableaux equals the weight of the third queue tableau.
\end{example}

\begin{figure}[!ht]
  \centerline{\includegraphics[width=\textwidth]{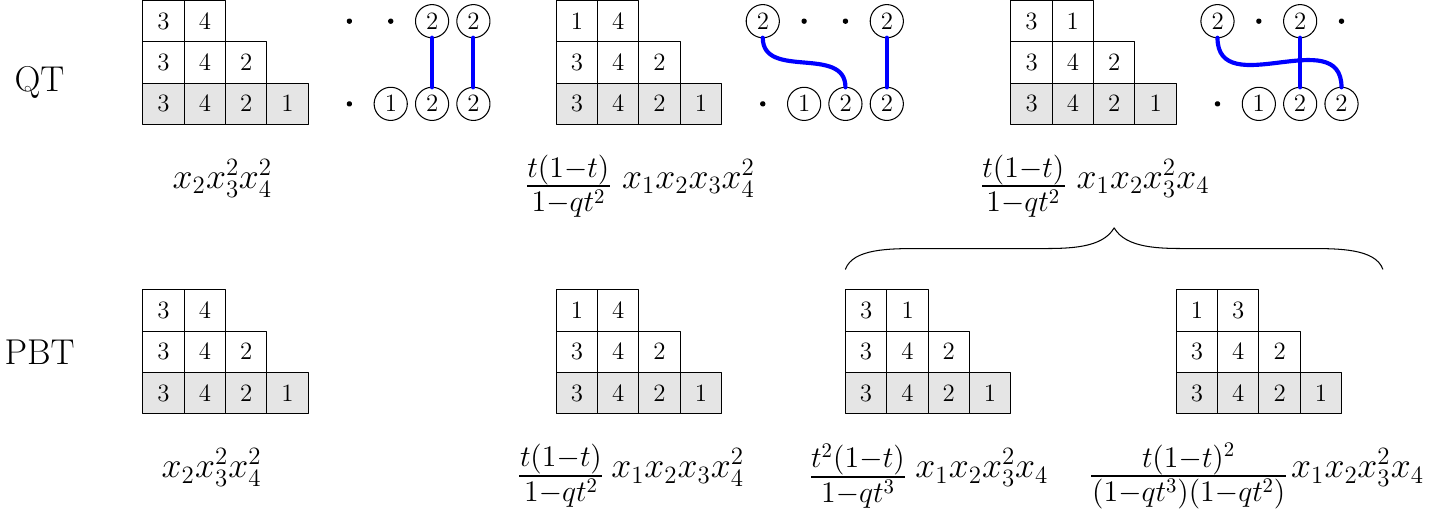}}
\centering
	\caption{The top line shows the three queue tableaux of shape
        $\lambda=(2,2,1,0)$ and basement $\sigma=(1,2,4,3)$ with the corresponding
        multiline queues of type $\mu=(0,1,2,2)$.  The second line shows
	the four permuted basement tableaux of shape $\rev(\lambda) = (0,1,2,2)$ and basement $(1,2,4,3)$.
        Note that these correspond to the permuted basement tableaux from  \cite{A}
        but we are displaying them differently: to get ours from his, we
        first reflect his tableaux from bottom to top, then rotate them
        $90^{\circ}$ counterclockwise.
        The total weight for both is $x_2x_3^2x_4^2+(x_1x_2x_3^2x_4+x_1x_2x_3x_4^2)\frac{t(1-t)}{1-q t^2}$.}
\label{fig:queueTableaux}
 \end{figure}

To prove \cref{prop:tableauxformula}, we show that there is a direct weight-preserving bijection between
$\MLQ(\mu)$
and $\QT_{\lambda}^{\sigma}$
	where  $\lambda$ is the partition obtained from $\mu$ by rearranging its parts in decreasing order,
	and
	 $\sigma\in S_n$ is the longest permutation
	 such that
	$\mu_{\sigma(1)} \leq
	\dots \leq \mu_{\sigma(n)}$.
Our bijection is the following.

	\begin{definition}\label{def:Tab}
		Suppose $\mu$ is a composition with maximal entry $L$
		and let $Q\in\MLQ(\mu)$.
	Choose $\lambda$ and $\sigma$ as in
\cref{prop:tableauxformula}.
	Let $\phi$ be an augmented filling of shape $\lambda$ with basement labeled by $\sigma$ from right to left. Let $i_1,\ldots,i_{A}$ be the columns containing the string of linked balls of label $A$ $(1\leq A\leq L)$ that begin from the ball at column $i_1$ in row 1 of $Q$. Label the boxes of the column of $\phi$ with $i_1$ in the basement by $i_1,\ldots,i_A$ from bottom to top. Let $\Tab(Q)$ denote the resulting tableau.
	\end{definition}
In \cref{fig:queuetableau} we give the tableau  corresponding to the
multiline queue in \cref{fig:MLQ_example}.

\begin{figure}[!ht]
  \centerline{\includegraphics[height=1in]{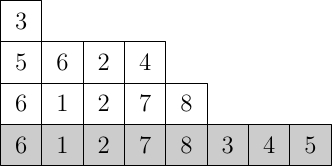}}
\centering
	\caption{$\Tab(Q)$ is shown, where $Q$ is
	the multiline queue in \cref{fig:MLQ_example}, with a description of its statistics in \cref{ex:TabQ}.}
\label{fig:queuetableau}
 \end{figure}

 \begin{lemma}\label{lem:bij}
Let $Q\in\MLQ(\mu)$,
	and choose $\lambda$ and $\sigma$ as in \cref{prop:tableauxformula}.
	Then $\Tab(Q)\in\QT_{\lambda}^{\sigma}$. Moreover, this map is a bijection from
	$\MLQ(\mu)$ to $\QT_{\lambda}^{\sigma}$.
\end{lemma}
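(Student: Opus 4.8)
The plan is to verify three things: (1) that $\Tab(Q)$ is actually a valid augmented filling, i.e. its entries lie in $[n]$ and obey the basement condition; (2) that $\Tab(Q)$ satisfies the nonattacking condition of \cref{def:attacking}, so that $\Tab(Q) \in \QT_{\lambda}^{\sigma}$; and (3) that the map $Q \mapsto \Tab(Q)$ is a bijection from $\MLQ(\mu)$ to $\QT_{\lambda}^{\sigma}$. For (1), the content is essentially definitional: each maximal string of linked balls in $Q$ starting at a ball in row $1$ of some column $i_1$ corresponds to exactly one column of $T$ whose basement entry is $i_1$, and we fill that column of $T$ from bottom to top with the column-indices $i_1, \dots, i_A$ of the balls in the string. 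Since the string has $A$ balls and they climb through rows $1$ through $A$, and the column of $T$ over basement-entry $i_1$ has height $\lambda_{\sigma^{-1}(\cdot)} = A$ by construction of $\sigma$ (the number of balls in row $r$ of a ball system is weakly increasing downward, matching the sorting of $\lambda$), the column heights match up. This is where one checks that the choice of $\lambda$ as the decreasing rearrangement of $\mu$ and $\sigma$ as the longest permutation sorting $\mu$ increasingly is exactly what makes the shapes agree.

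For (2), the nonattacking condition, I would translate each of the three attacking relations in \cref{def:attacking} into a statement about linked strings of balls in $Q$. Two boxes $(i',j)$ and $(i,j)$ in the same row $j$ of $T$ carry the column-indices of the $j$-th balls of two \emph{distinct} strings, and these are distinct column-indices since two different balls of $Q$ occupy two different positions in row $j$; this handles the first attacking type. The second and third attacking types — comparing row $j$ with row $j-1$ — are governed by the matching rules of the multiline queue: the fact that in forming $Q$ a ball $b$ in row $r$ is matched to a ball $b'$ in row $r-1$ ``to the right or straight down'', and trivially-paired balls must be matched vertically. The third (new) attacking condition, restricted to $i' < i$ with $\lambda_i = \lambda_{i'}$, encodes the ``within a fixed label, read right to left'' convention in the definition of $\wt_{qt}$ and the structure of which strings of the same length can sit side by side; I expect this to be the most delicate point, since it is precisely the condition that differs from \cite{HHL3, A} and was introduced to make the correspondence work. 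The strategy is to argue that an attacking violation in $\Tab(Q)$ would force either two balls of $Q$ to coincide or a matching in $Q$ to go ``left'', contradicting the MLQ construction.

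For (3), bijectivity, I would construct the inverse explicitly: given $T \in \QT_{\lambda}^{\sigma}$, each column of $T$ (with basement entry $i_1$ and entries $i_1 = c_1, c_2, \dots, c_A$ bottom to top) should be read off as a string of balls, placing a ball in column $c_r$ of row $r$ for $r = 1, \dots, A$, and declaring ball $c_r$ (row $r$) matched to ball $c_{r+1}$ (row $r+1$). One then checks that the resulting ball system and matching satisfy the MLQ axioms — in particular that the matching produced by this reading coincides with the canonical greedy matching algorithm in \cref{def:MLQ}, which is where the nonattacking conditions of $T$ are used (they guarantee, e.g., that no ball is forced into a trivial pairing it does not have, and that the ``shortest right-or-down strand'' is respected). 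Since $\Tab$ and this reverse map are mutually inverse on the level of the underlying data, bijectivity follows. I expect step (2), and within it the third attacking condition, to be the main obstacle: it requires carefully tracking how the right-to-left reading order within a fixed label interacts with which same-length strings can be horizontally adjacent, and this is exactly the subtlety that the nonstandard definition of attacking boxes and of Type A triples was designed to capture. (The weight-preservation, i.e. $\wt(Q) = \wt(\Tab(Q)) x^{\Tab(Q)}$, is the content of the separate \cref{prop:tableauxformula} and is not needed for this lemma.)
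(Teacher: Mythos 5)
Your plan matches the paper's own proof, which is even terser: it declares part (1) definitional and reduces bijectivity to the observation that the third attacking condition forces two equal entries in adjacent rows of equal-length columns to sit directly on top of one another — i.e., it encodes exactly the MLQ rule that a ball directly above a ball of the same label must be trivially paired with it — while the verification that $\Tab(Q)$ is non-attacking is actually carried out inside the proof of \cref{prop:tableauxformula}. One small correction to your heuristic for the delicate third condition: a violation does not force a matching to ``go left'' (strands are allowed to wrap around the cylinder), but rather produces two vertically adjacent balls of the same label belonging to different strings, contradicting the trivial-pairing rule recorded in the remark after \cref{def:MLQ}.
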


\begin{proof}
We claim that the filling $\Tab(Q)$ obtained from an MLQ in this way is non-attacking.

First, if a ball labeled $j$ is directly above a ball labeled $i$ in $Q$ in row $r$ and column $c$, then either $j<i$, or $j=i$ in which case the two balls are paired. There are two boxes in $\Tab(Q)$ containing the label $c$ in rows $r$ and $r+1$ respectively. If $j<i$, the box in row $r+1$ of $\Tab(Q)$ is to the right of the box in row $r$ since all columns corresponding to label $j$ are by construction to the right of all columns corresponding to label $i$. If $j=i$,  both boxes labeled $c$ are in the same column, and thus non-attacking in $\Tab(Q)$ in both cases.

There is a simple map from a filling $\phi\in\QT_{\lambda}^{\sigma}$ to $\MLQ(\mu)$.
Note that if $\lambda$ and $\sigma$ are
as in \cref{prop:tableauxformula}, then $\mu$ is determined from them by
	$\mu_i = \lambda_{n+1-\sigma^{-1}(i)}$.
	Let $M$ be a multiline queue with $\lambda_1$ rows and $n$ columns. For each $j$, suppose the entries in column $j$ of $\phi$ are, from bottom to top, $x_0, x_1,\ldots,x_{\lambda_j}$, with $x_0=\sigma(n+1-j)$ denoting the
entry in the basement of column $j$.  If $\lambda_j \geq 1$,
then for $1 \leq i \leq \lambda_j$, we place a ball in
row $i$ and column $x_i$  in the multiline queue $Q$,
connecting pairs of these $\lambda_j$ balls to each other when they
	occur in successive rows,
and giving them all label $\lambda_j$.  In particular
the bottom ball has label $\lambda_j$ and occurs in column
$x_1 = x_0= \sigma(n+1-j)$.  If $\lambda_j = 0$ then we do not
add any balls and so we get an empty spot (or $0$)
in column $\sigma(n+1-j)$ of the bottom row.
Therefore for all $j$, the bottom row of $M$ contains $\lambda_j$
in column $\sigma(n+1-j)$, or equivalently,
column $i$ of the bottom row contains
$\lambda_{n+1-\sigma^{-1}(i)}$.  In particular $M$ has type $\mu$.

It is easy to check that this map reverses our construction $\Tab$ in \cref{def:Tab}.
\end{proof}

 \begin{lemma}\label{lem:stats}
 Let $Q$ be a multiline queue and $\phi=\Tab(Q)$ its corresponding queue tableau.
\begin{enumerate}
\item Let $x$ be a cell in row $r$ and a column of length $j$ of $\Tab(Q)$. Then $\leg(x)+1=j-r+1$ and $j$ is the label of the ball corresponding to $x$ in $Q$. 
\item If for a cell $x \in \Tab(Q)$, $\phi(d(x))<\phi(x)$, let $b(x)$ and $b(d(x))$ be the balls in $Q$ corresponding to $x$ and $d(x)$. The ball pairing $b(x)$ and $b(d(x))$ is wrapping, and $\leg(x)+1=j-r+1$ where $j$ is the label of both balls, and $r$ is the row containing $b(x)$ in $Q$. Thus $\maj(\Tab(Q))$ is equal to the power of $q$ in the numerator of $\wt(Q)$.
\item Let $U(r,j)$ be the set of unrestricted boxes in row $r$ and columns of length $j$ of $\Tab(Q)$. Then the contribution
\[
\prod_{x\in U(r,j)} \frac{1-t}{1-q^{\leg(x)+1}t^{\arm(x)+1}}
\]
matches the analogous contribution of the ball pairings starting from a ball labeled $j$ in row $r$ of the multiline queue.
\item The coinversions
of type $B$ in $\Tab(Q)$ count the number of balls skipped of lower labels in $Q$. The coinversions of type $A$ count the number of balls skipped of the same label in $Q$.
\end{enumerate}
\end{lemma}

\begin{proof}
(1), (2), and (4) are immediate from the definitions.

For (3), fix $2 \leq j \leq \lambda_1$. Let $U(r,j)=\{x_1,\ldots,x_k\}$ wbe the set of unrestricted boxes contained in columns of size $j$ in row $r$ of $\Tab(Q)$. Here we suppose that for all $i$, $x_i$ is to the left of $x_{i+1}$. Let $\ell_j$ be the number of cells in columns of length smaller than $j$ in row $r-1$. Then $\arm(x_i)+1=\ell_j+i$ and $\leg(x_i)+1 = j-r+1$, and so we get the contribution
\begin{equation}\label{eq:arms}
\prod_{x\in U(r,j)} \frac{1-t}{1-q^{\leg(x)+1}t^{\arm(x)+1}} 
= \prod_{i=1}^k \frac{1-t}{1-q^{j-r+1}t^{\ell_j+i}}
\end{equation}
to the weight of $\Tab(Q)$ for the entries $U(r,j)$.

On the other hand, each $x_i\in U(r,j)$ corresponds to a ball with label $j$ in row $r$ of $Q$ that is not trivially paired. There are also $\ell_j$ balls of labels smaller than $j$ in row $r-1$ of $Q$. Then the set of the numbers of free balls in row $r-1$ before the pairing of each ball corresponding to $x_i\in U(r,j)$ is precisely $\{\ell_j+1,\ell_j+2,\ldots,\ell_j+k\}$. Since every ball corresponding to $x_i\in U(r,j)$ contributes a factor of $(1-t)/(1-q^{j-r+1}t^{\# \free})$, these contributions to $\wt(Q)$ match the contributions in \eqref{eq:arms} to $\wt(\Tab(Q))$.

Finally, comparing \cref{def:wt} to
\cref{def:weight}, we see that $\wt(Q) = \wt(\Tab(Q))$.
\end{proof}

\begin{example}\label{ex:TabQ}
We compare the weights of the pairings of balls in the multiline queue $Q$ from \cref{fig:MLQ_example} to the statistics of the corresponding queue tableau $\phi=\Tab(Q)$ in \cref{fig:queuetableau}.

\begin{itemize}[leftmargin=*]
\item In $Q$, one ball is skipped in the pairing of balls labeled
$3$ between row $3$ and $2$. This pairing corresponds to the coinversion starting at the cell $u=(1,3)$ in $\Tab(Q)$, which is the type $B$ triple \raisebox{-2pt}{$\qtrip{$3$}{$5$}{$4$}$}\ . The total weight of pairings from row 3 to row 2 in $Q$ is $t(1-t)/(1-q t^4)$. In $\Tab(Q)$ the quantity $t(1-t)/(1-qt^4)$ comes from cell $u$, with $\arm(u)+1=4$ and $\leg(u)+1=1$.
\item In $Q$, no balls are skipped in the pairing of balls labeled $3$ between rows 2 and 1. Accordingly, there are no coinversions starting at the corresponding cell $u=(1,2)$ in $\Tab(Q)$. The total weight of this pairing in $Q$ is $(1-t)/(1-q^2t^5)$. This is consistent with the contribution to $\wt(\phi)$ from $u$ with $\arm(u)+1=5$ and $\leg(u)+1=2$.
\item In $Q$, the nontrivial pairings of balls
labeled $2$ (from row 2 to row 1) skip two and zero balls, respectively. The first of these pairings corresponds to two coinversions starting at the cell $u_1=(2,2)$ in $\Tab(Q)$ contributing the weight $t^2$: the type $A$ quadruple \raisebox{-2pt}{\qqua{$6$}{$1$}{$4$}{$7$}} and the type $B$ triple \raisebox{-2pt}{\qtrip{$6$}{$1$}{$8$}}. The second pairing corresponds to the cell $u_2=(4,2)$ in $\Tab(Q)$, which has no coinversions starting from it.
\item In $Q$, the weights of the nontrivial pairings of balls
labeled $2$ (from row 2 to row 1) are $qt^2(1-t)/(1-qt^3)$ and $(1-t)/(1-qt^2)$. The cell $u_1$ in $\Tab(Q)$ has $\arm(u_1)+1=2$ and $\leg(u_1)+1=1$, and has $\phi(d(u_1))<\phi(u_1)$, accounting for the additional weight $q(1-t)/(1-qt^2)$. The second pairing corresponds to the contribution from the cell $u_2=(4,2)$, which has $\arm(u_2)+1=3$ and $\leg(u_2)+1=1$, accounting for the additional weight $(1-t)/(1-qt^3)$; the products of these contribute equally in $\wt(Q)$ and $\wt(\phi)$, respectively.

\end{itemize}

\end{example}

\begin{cor}
	\label{thm:bijection}
Let $\mu$ be a partition, and choose $\lambda$ and $\sigma$ as in \cref{prop:tableauxformula}. Then the bijection
$\Tab: MLQ(\mu) \rightarrow \QT_{\lambda}^{\sigma}$ is weight-preserving.
\end{cor}
\begin{proof}
	This follows from
	 \cref{lem:bij}
and \cref{lem:stats}.
\end{proof}

\begin{proof}[Proof of \cref{prop:tableauxformula}]
This follows immediately from \cref{thm:bijection} and \cref{def:Fmu}.
\end{proof}




\begin{remark}
There is an alternative notion of type $A$ quadruple and coinversion for which \cref{prop:tableauxformula} holds. Define a \emph{type $A''$ quadruple} to be a quadruple of boxes\linebreak $\{x,d(x),y,y'\}$ in $D_{\lambda}$ where $x,y'$ are in the same row and in columns $j<i$, respectively, $\lambda_j=\lambda_i$, and $d(y')=y$. We say that this type $A''$ quadruple is a \emph{coinversion} if all four entries in the cells $\{x,d(x),y,y'\}$ are distinct, and either $\phi(x)<\phi(y)<\phi(d(x))$ or $\phi(y)<\phi(d(x))<\phi(x)$ or $\phi(d(x))<\phi(x)<\phi(y)$ {(the entries in the cells $\{x,d(x),y\}$ are cyclically increasing when read in clockwise order)}.

Then we can define $\coinv''(\phi)$ to be the number of type $B$ and type $A''$ coinversions, and replace $\coinv(\phi)$ by $\coinv''(\phi)$ in the formula for $\wt(\phi)$ in \eqref{eq:weight}.

This equivalence of weights is due to \cref{lem:order}. We recall the correspondence briefly. We think of columns of the same height in the queue tableaux as balls with the same label in the multiline queue. We think of coinversions in the queue tableaux as skipped balls in the multiline queue, and in particular, we think of type $A$ quadruples as skipped balls of the same label. In the multiline queue, the weight of each pairing is dependent on the pairing order of balls of the same label. The condition $\phi(x)>\phi(y')$ (from \cref{fig:inversion}) for type $A$ quadruples corresponds to a right-to-left pairing order in the multiline queue. On the other hand, type $A''$ quadruples correspond to another pairing order, that is determined by the entries in the row containing the cells $x,y'$. From \cref{lem:order}, we have that the total weight summed over all multiline queues is independent of the pairing order, from which we conclude that using $\coinv''$ gives the same total weight after summing over all tableaux.
\end{remark}

\bibliographystyle{alpha}
\bibliography{bibliography}

\end{document}